\documentclass[preprint,11pt]{article}

% Allgemeine Definitionen
\usepackage[english]{babel}	% Deutsche Sprachanpassung, z.B. Silbentrennung bei Worten mit Sonderzeichen
\usepackage[utf8x]{inputenc}		% Direkte Eingabe von Umlauten
\usepackage{enumerate}
\usepackage{textcomp}                % für erweiterten Text-Symbolvorrat
\usepackage{typearea}
\usepackage{pdfpages}
\usepackage[toc]{appendix}
\usepackage[disable]{todonotes}
\usepackage{subcaption}
\usepackage{mathtools}

%\usepackage{natbib} % advanced citing features

% Verschiedene Operatoren

% Weiterer nützlicher Kram
%\usepackage{enumitem}
\usepackage{amsfonts, amsmath, amssymb, amsthm, dsfont}
\usepackage{amsthm}			% Eigene Umgebungen
\usepackage{thumbpdf}	% Seitenvorschau in PDF-Dokumenten
\usepackage{natbib}
\usepackage{booktabs}

\usepackage{pgf, tikz}
\usepackage{tikz-cd}		% Kommutative Diagramme
\usepackage{bbm}

\theoremstyle{plain}
\newtheorem{definition}{Definition}[section]

\newtheorem*{assumption*}{Assumption}

\newtheorem*{condition*}{Condition}
\newtheorem{example}{Example}
\theoremstyle{plain}
\newtheorem{theorem}[definition]{Theorem}
\newtheorem{proposition}[definition]{Proposition}
\newtheorem{lemma}[definition]{Lemma}

\theoremstyle{remark}
\newtheorem{remark}{Remark}

\usepackage{graphicx}
\usepackage{hyperref}
\allowdisplaybreaks

% ===== Abkürzungen ==========
\newcommand{\N}{\mathbb{N}}
\newcommand{\Z}{\mathbb{Z}}

\newcommand{\R}{\mathbb{R}}
\newcommand{\E}{\mathbb{E}}
\newcommand{\A}{\mathcal{A}}
\newcommand{\F}{\mathcal{F}}

\newcommand{\Cov}{\operatorname{Cov}}

\newcommand{\deq}{\overset{d}{=}}

\newcommand{\tr}{\text{tr}}
\newcommand{\Y}{\mathcal{Y}}
\newcommand{\beps}{\boldsymbol{\epsilon}}

%SHOW/HIDE PROOFS
\newif\ifhideproofs
%\hideproofstrue %uncomment/comment this line to hide/show proofs
\ifhideproofs
  \usepackage{environ}
  \NewEnviron{hide}{}

\fi

%CUSTOM DARK MODE
%\usepackage{color}
%\color{yellow}
%\pagecolor{black}

\title{Sequential Gaussian approximation for nonstationary time series in high dimensions}
\author{Fabian Mies\footnote{Corresponding author,
		mies@stochastik.rwth-aachen.de}\and Ansgar Steland}
\date{RWTH Aachen University, Germany}
\begin{document}
\maketitle

\begin{abstract}
	Gaussian couplings of partial sum processes are derived for the high-dimensional regime $d=o(n^\frac{1}{3})$. 
	The coupling is derived for sums of independent random vectors and subsequently extended to nonstationary time series.
	Our inequalities depend explicitly on the dimension and on a measure of nonstationarity, and are thus also applicable to arrays of random vectors.
	To enable high-dimensional statistical inference, a feasible Gaussian approximation scheme is proposed.
	Applications to sequential testing and change-point detection are described.
	
	\textbf{Keywords:} strong approximation, Rosenthal inequality, physical dependence measure, bounded variation
\end{abstract}

\section{Introduction}

The multivariate central limit theorem (CLT) states that, under certain regularity conditions, the standardized sum $S_n=1/\sqrt{n}\sum_{t=1}^n X_t$ tends towards a Gaussian distribution as $n\to\infty$, where $X_t$ are $d$-dimensional random vectors.
However, if the dimension $d=d_n$ is allowed to increase with $n$, standard asymptotic theory is no longer applicable because there is no suitable limit distribution.
In this high-dimensional setting, the classical CLT may be replaced by a suitable approximating Gaussian random vector $\Y_n$, and sufficiently sharp quantitative bounds on a probability metric.
It is then of particular interest to study the dependence of these bounds on the dimension $d$.

There are conceptually two approaches to quantify the Gaussian approximation.
The first approach is to construct a coupling, i.e.\ to define $S_n$ and $\Y_n$ on the same probability space, and to consider the difference $\|S_n-\Y_n\|$, where $\|\cdot\|$ denotes the Euclidean vector norm.
This suggests to study the central limit theorem in Wasserstein metrics.
A corresponding result with optimal dependence on the dimension has recently been obtained by \cite{Zhai2018}.
For iid bounded random vectors, say $\|X_t\|\leq \beta$, on a richer probability space, he constructs a Gaussian approximation such that $\E \|S_n-\Y_n\|^2 =\mathcal{O}(\frac{d \log(n)^2 \beta^2}{n})$.
This bound has been improved by \cite{eldan2020} to $\mathcal{O}(\frac{d \log(n) \beta^2}{n})$.
Since the Euclidean norm in $d$ dimensions is typically of the order $\beta \asymp\sqrt{d}$, these results effectively impose the restriction $d = o(\sqrt{n})$.
\cite{bonis2020} studies Gaussian approximations in Wasserstein distances based on higher order moments.
% \cite{fang2018} 

The second approach is to give bounds for the errors $\sup_{A\in\A}|P(S_n\in A)-P(\Y_n\in A)|$ for certain sets $\A\subset\text{Pot}(\R^{d_n})$.
In their seminal work, \cite{Chernozhukov2013,Chernozhukov2017,chernozhukov2020} show that the dimension may grow as $d=\exp(n^c)$ for some suitable $c$, if $\A$ is the class of hyperrectangles in $\R^d$.
Their ideas have been extended to dependent random vectors by \cite{Zhang2017gauss}, \cite{Zhang2018}, and \cite{kurisu2021}, see also the supplement of \cite{Chernozhukov2019}. 
These results enable many statistical applications, e.g.\ the construction of simultaneous confidence intervals or control of the family-wise error rate in multiple testing, which may be formulated in terms of test statistics of supremum type. 
However, the restriction that $A$ is a hyperrectangle excludes statistics which are, for example, based on Euclidean norms.
% \cite{Fang2020} 

Both approaches presented above are concerned with approximations for the full sum $S_n$.
In this paper, we construct a sequential coupling of the partial sum process $S_k = \frac{1}{\sqrt{n}} \sum_{t=1}^k X_t$, $k=1,\ldots,n$, and a Gaussian process $\Y_k$, such that
\begin{align}
	\max_{k\leq n} \left\| S_k - \Y_k  \right\| = \mathcal{O}_P(\tau_n). \label{eqn:intro}
\end{align}
The problem \eqref{eqn:intro} is also known as a strong approximation in the literature, and may be regarded as a quantitative analogue of Donsker's invariance principle.
If the random vectors $X_t$ have finite $q$-th moments, the optimal rate is known to be $\tau_n =\mathcal{O}(n^{\frac{1}{q}-\frac{1}{2}})$ \citep{breiman1967}. 
This rate has been achieved by \cite{komlos1975,komlos1976} for univariate iid summands, and by \cite{einmahl1989} for multivariate iid summands.
We refer to \cite{Zaitsev2013} for an extensive literature review on strong Gaussian approximations for independent random vectors.
In the dependent case, this optimal rate has recently been achieved by \cite{Berkes2014} for univariate stationary time series, and by \cite{Karmakar2020} for multivariate nonstationary time series.
Earlier results for multivariate time series are due to \cite{liu2009} for the stationary case, and \cite{Wu2011} for the nonstationary case.

The goal of this paper is to provide a strong approximation result for high-dimensional, nonstationary time series.
Although most existing work on strong approximations considers a fixed dimension, some results make the dependence on the dimension explicit and are hence applicable in a high-dimensional setting.
To the best of our knowledge, the sharpest bound in terms of the dimension is $\tau_n = n^{\frac{1}{q}-\frac{1}{2}} d^{\frac{15}{2}+\alpha}$ for small $\alpha>0$, see e.g.\ Corollary 3 in \cite{zaitsev2007}.
This term vanishes at best if $d=o(n^\frac{1}{15})$, which is rather restrictive for high-dimensional applications.
Our first contribution is to construct a strong approximation for independent summands such that $\tau_n = \sqrt{\log (n)}\, d^{\frac{3}{4}-\frac{1}{2q}} n^{\frac{1}{2q}-\frac{1}{4}}$, see Theorem \ref{thm:eldan-sequential}.
For large $p$, this result allows for a dimension growing as $d=\mathcal{O}(n^{\frac{1}{3}-\alpha})$.
On the other hand, this improved dependence on the dimension comes at the price of a suboptimal rate with respect to $n$.
To derive the high-dimensional approximation result, we combine the coupling of \cite{eldan2020} with a blocking technique suggested by \cite{csorgo1975}. 

As a second contribution, we extend the high-dimensional approximation result to nonstationary time series, using the physical dependence measure introduced by \cite{Wu2005}.
Under mild regularity conditions, we also derive an explicit expression for the approximating Gaussian process in terms of local long run covariances, and suggest a multiplier bootstrap procedure to perform statistical inference.
In contrast, the result of \cite{Karmakar2020} does not provide an explicit expression for the Gaussian variance, while \cite{Berkes2014} provide explicit expressions only for the stationary univariate case.
Crucial for our feasible bootstrap approximation is some distributional regularity in time, which we formulate in terms of a total variation norm of the underlying kernel function. 
Our results are in particular applicable to high-dimensional, locally-stationary time series.
As a technical tool, we also derive a high-dimensional Rosenthal inequality for dependent random vectors, which might be useful in other situations.

All our approximation results are formulated as valid inequalities for finite sample size $n$.
In particular, the effects of dimension, moment bounds, nonstationarity, and dependency (in the time series setting), are all stated explicitly. 
Thus, we do not need to regard the observations $X_1,\ldots, X_n$, as a finite subset of an infinite sequence of random variables, and our results may also be applied to arrays $X_{1,n},\ldots, X_{n,n}$ of random variables.
For example, in Section \ref{sec:dependent}, we use this flexibility to study a class of locally stationary time series.

The remainder of this paper is structured as follows.
The strong approximation for independent random vectors is presented and discussed in Section \ref{sec:independent}.
Section \ref{sec:dependent} describes the extension to dependent random vectors and presents the Rosenthal inequality for high-dimensional time series.
The bootstrap approximation is discussed in Section \ref{sec:bootstrap}.
All proofs and technical results are gathered in Section \ref{sec:proofs}

\paragraph*{Notation:}
For $a,b\in\R$, denote $a\wedge b=\min(a,b)$, $a\vee b=\max(a,b)$.
For $x>0$, we use $\log^*(x)=\log(x)\vee 1$. 
For $v=(v_1,\ldots, v_d)\in\R^d$, denote $\|v\|_r = (\sum_{i=1}^d |v_i|^r)^\frac{1}{r}$, and $\|v\|=\|v\|_2$ is the Euclidean norm.
For a matrix $A$, $\tr(A)$ denotes the trace of $A$, and $\|A\|_{\tr} = \tr((A^T A)^\frac{1}{2})$ is the trace norm, also known as the nuclear norm or the Schatten norm of order $1$.
If $A$ is symmetric, we denote by $\lambda_{\max}(A)$ and $\lambda_{\min}(A)$ the largest and smallest eigenvalues of $A$, respectively.
For $v\in\R^d$, the outer product is denoted as $v^{\otimes 2} = v v^T$.

\section{Strong approximation for independent random vectors}\label{sec:independent}

The results of \cite{Zhai2018} and \cite{eldan2020} for the central limit theorem in the Wasserstein metric yield Gaussian couplings for the sum $S_n = \sum_{t=1}^n X_t$, even in high-dimensional settings.
Here, we show that these couplings may also be leveraged to obtain a sequential Gaussian approximation of the partial sums $S_k = \sum_{t=1}^k X_t$, i.e.\ of the form \eqref{eqn:intro}.
To this end, we use the following coupling for the full sum, which is essentially due to \cite{eldan2020}.
We slightly extend their result to unbounded, not identically distributed random vectors.

\begin{theorem}\label{thm:wasserstein-eldan}
	Let $X_1,\ldots, X_n$ be independent, centered, $d$-variate random vectors which admit the bound 
	$(\E\|X_t\|^q)^\frac{1}{q}\leq b_t$, $t=1,\ldots, n$, for some $q>2$.
	On a different probability space, there exist independent random vectors $\tilde{X}_t\deq X_t$, and independent Gaussian random vectors $Y_t \sim\mathcal{N}(0,\Cov(X_t))$, such that for some universal $C>0$,
	\begin{align*}
		\left(\E \left\| \frac{1}{\sqrt{n}}\sum_{t=1}^n(\tilde{X}_t - Y_t) \right\|^2 \right)^\frac{1}{2}
		&\leq \frac{C}{\sqrt{q-2}\wedge 1} \left( \frac{d}{n} \right)^{\frac{1}{2}-\frac{1}{q}} \sqrt{ \log (n)}  \sqrt{\frac{1}{n}\sum_{t=1}^n b_t^{2}}.
	\end{align*}
\end{theorem}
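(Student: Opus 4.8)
The plan is to reduce the not-identically-distributed, unbounded case to the iid bounded case treated by \cite{eldan2020} via two standard devices: truncation and a ``lifting'' trick that embeds heterogeneous summands into an iid sequence. First I would recall the known result: if $Z_1,\ldots,Z_N$ are iid, centered, with $\|Z_t\|\leq \beta$ almost surely, then \cite{eldan2020} provides a coupling with $\E\|\frac{1}{\sqrt N}\sum(Z_t-Y_t)\|^2 \lesssim \frac{\beta^2 \log N}{N}$. The target bound has the shape $(d/n)^{1-2/q}\log(n)\cdot\frac1n\sum b_t^2$, and the factor $(d/n)^{1-2/q}$ is exactly what one gets from optimizing a truncation level: writing $X_t = X_t\1\{\|X_t\|\le \beta\} + X_t\1\{\|X_t\|>\beta\}$, the truncated part is bounded by $\beta$ (effectively $\sqrt d$ times a scalar truncation, hence the $d$-dependence), while the tail part has second moment controlled by Markov at order $q$: $\E\|X_t\1\{\|X_t\|>\beta\}\|^2 \le b_t^q/\beta^{q-2}$.

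The key steps, in order. (1) \emph{Homogenization.} Replace the $b_t$ by a common bound: set $\bar b^2 = \frac1n\sum_{t=1}^n b_t^2$ and observe it suffices to prove the inequality with every $b_t$ replaced by something comparable to $\bar b$; more carefully, one can rescale each $X_t$ or, cleaner, run the argument keeping the $b_t$ and only aggregate $\sum b_t^q$ vs.\ $(\sum b_t^2)$-type quantities at the end using $q>2$ and the normalization. (2) \emph{Truncation at level $\beta$.} Decompose $X_t = X_t^{(\le)} + X_t^{(>)}$ as above; recenter both pieces so they are mean zero. The recentering shifts are each $O(b_t^q/\beta^{q-1})$ in norm and contribute negligibly after summing and dividing by $\sqrt n$. (3) \emph{Couple the bounded part.} Apply the iid-bounded coupling of \cite{eldan2020} to the recentered truncated vectors. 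Since they are not iid, embed: either invoke the heterogeneous version if \cite{eldan2020} states one, or use the standard trick of attaching the index $t$ to the vector (work on $\R^{d}$ with a ``which-coordinate-block'' label) so that a single iid draw from the mixture reproduces the heterogeneous sum; the bound $\beta$ is preserved. This yields Gaussian $\tilde Y_t$ with covariance $\Cov(X_t^{(\le)})$ and the $L^2$ error $\lesssim \beta\sqrt{\log n}/\sqrt n$ (per the $\frac1n\sum b_t^2$ normalization, $\lesssim \beta\sqrt{\log n}\,\sqrt{\tfrac1n\sum 1}/\sqrt n$; care with how $\beta$ enters when we ultimately take $\beta \asymp \sqrt d\,\bar b^{?}$). (4) \emph{Control the tail part directly by its $L^2$ norm}, using independence so variances add: $\E\|\frac1{\sqrt n}\sum X_t^{(>)}\|^2 = \frac1n\sum \E\|X_t^{(>)}\|^2 \le \frac1n\sum b_t^q/\beta^{q-2}$. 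Bound the Gaussian part $Y_t - \tilde Y_t$ similarly via the covariance discrepancy $\|\Cov(X_t)-\Cov(X_t^{(\le)})\|_{\tr}$, which is again $\lesssim \E\|X_t^{(>)}\|^2$ up to cross terms, using Gaussian comparison (a maximal coupling of Gaussians with close covariances, or just $\E\|N(0,\Sigma_1)-N(0,\Sigma_2)\|^2 \le \|\Sigma_1-\Sigma_2\|_{\tr}$ under the obvious joint construction). (5) \emph{Optimize $\beta$.} The two error contributions are $\asymp \beta^2\log n/n$ (bounded part, roughly) and $\asymp \bar b^q/\beta^{q-2}$ (tail part). Balancing gives $\beta \asymp (\bar b^q n/\log n)^{1/q}$ up to the dimensional factor, and substituting returns the claimed rate $\frac{C}{\sqrt{q-2}\wedge 1}(d/n)^{1/2-1/q}\sqrt{\log n}\,\bar b$; the $(q-2)^{-1/2}$ appears because the constant in the iid coupling of \cite{eldan2020} (or in the interplay of the two terms near $q=2$) degrades like $1/\sqrt{q-2}$, and for large $q$ it is harmless.

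The main obstacle I anticipate is \textbf{bookkeeping the dimension dependence correctly} through the truncation. A scalar truncation of $\|X_t\|$ at level $\beta$ gives a vector bounded by $\beta$ in Euclidean norm, but in \cite{eldan2020} the coupling constant multiplies $\beta^2$, so naively $\beta \asymp \sqrt d$ times a scalar scale would produce a $d$, not $d^{1-2/q}$. The resolution is that one must truncate at a level $\beta$ that itself scales with $d$ in the optimal way and then let the tail-term exponent $q$ convert $\beta^{q-2}$ in the denominator into the fractional power $d^{1-2/q}$ after balancing — i.e.\ the dimension enters only through the $\beta$ that optimizes the trade-off, not as an extra $\sqrt d$ on top. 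Getting this exponent arithmetic exactly right (including that we want $(d/n)^{1/2-1/q}$ and not, say, $(d/n)^{1/2}$ times a $\log$) is the delicate part; everything else — recentering, adding variances, the Gaussian covariance comparison in trace norm — is routine. A secondary nuisance is ensuring the heterogeneous (non-iid, non-identically-distributed) summands can legitimately be fed into the iid-only statement of \cite{eldan2020}; the index-embedding trick handles this but needs to be spelled out so that the bound $b_t$ rather than $\max_t b_t$ ends up controlling the final estimate.
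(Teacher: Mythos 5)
Your overall strategy---radial/indicator truncation at a level $\beta$, recentering, coupling the bounded part via the construction of \cite{eldan2020}, controlling the tail part and the resulting covariance discrepancy in trace norm, and then optimizing $\beta$---is exactly the paper's strategy, and your exponent arithmetic (balancing $d\beta^2\log(n)/n$ against $\beta^{2-q}\cdot\frac1n\sum_t b_t^q$ to land on $(d/n)^{1-2/q}$ with $\beta \asymp b\,(n/d)^{1/q}$) is correct. However, your step (3) contains a genuine gap. The result of \cite{eldan2020} is stated for iid summands, and the ``index-embedding / mixture'' trick you propose to feed heterogeneous summands into it does not work: if $Z_1,\ldots,Z_n$ are iid draws from the uniform mixture $\frac1n\sum_t \mathcal{L}(X_t)$ (equivalently, draws of a random index $J$ followed by a draw from $\mathcal{L}(X_J)$), then $\sum_i Z_i$ is \emph{not} equal in distribution to $\sum_t X_t$ --- some indices are sampled several times and others not at all, and only the first two moments of the sum agree. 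The block-embedding variant (placing $X_t$ in the $t$-th block of $\R^{nd}$) leaves the summands non-identically distributed, and would in any case inflate the dimension factor from $d$ to $nd$, destroying the rate. There is no black-box reduction here: one has to reopen the proof of \cite{eldan2020}. That is what the paper does --- each recentered truncated vector $\bar X_t$ is represented as a stochastic integral $\int_0^\infty \Gamma^t_s\,dB^t_s$ against its own Brownian motion, the $n$ Brownian motions are merged into a single one carrying the matrix $\tilde\Gamma_s = (\frac1n\sum_t(\Gamma^t_s)^2)^{1/2}$ (Proposition \ref{prop:BM-change}), and the two integrand bounds $4\,\frac1n\sum_t\E\,\tr\Gamma^t_s$ and $d/n$ together with the stopping-time tail estimate give $\E\|\tilde S_n-\bar\Y_n\|^2\le Cd\log(n)\sum_t\beta_t^2/n^2$. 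The iid assumption in \cite{eldan2020} is only a notational convenience for this argument, but establishing that requires carrying out the proof, not citing the theorem.

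Two secondary points. First, your ``homogenization'' step would weaken the conclusion: with a single truncation level $\beta$ the optimized bound involves $(\frac1n\sum_t b_t^q)^{2/q}$, which dominates the stated $\frac1n\sum_t b_t^2$ by the power-mean inequality; the paper instead truncates each $X_t$ at its own level $\beta_t = b_t\,n^{1/q}d^{-1/q}$, which is what produces the $L_2$-average of the $b_t$. Second, your recollection of the iid bound as $\beta^2\log(N)/N$ omits the factor $d$ (the correct bound is $d\beta^2\log(N)/N$); you flag the dimension bookkeeping as delicate, but the resolution is not that ``$d$ enters only through $\beta$'' --- the explicit $d$ in the Eldan-type bound and the $d^{-1/q}$ in the truncation level are both needed to arrive at $(d/n)^{1-2/q}$.
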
  

Letting $q\to\infty$ recovers the approximation rate of \cite{eldan2020} for bounded random vectors.
Note that all vectors are measured in the Euclidean norm $\|\cdot\|$. 
Therefore, the typical size of the moment bounds is $b_i \asymp \sqrt{d}$.
Hence, for $q$ sufficiently large, the bound vanishes if $d=o(n^{\frac{1}{2}-\alpha})$ for some $\alpha>0$.

Theorem \ref{thm:wasserstein-eldan} only provides an approximation for the full sum, but not for partial sums. 
In particular, the constructed $2d$-variate random vectors $(\tilde{X}_t, Y_t)$, $t=1,\ldots, n$, are in general not independent, such that Doob's maximal inequality is not applicable.
To derive a sequential approximation result, we pursue an idea due to \cite{csorgo1975}. 
Split the sum into $M$ blocks of size $L$, and assume $n=L \cdot M$ for simplicity. 
Denote each block by $S^j=\sum_{t=(j-1)L+1}^{jL} X_t$.
On a potentially different probability space, we may define $\tilde{X}_t\deq X_t$, $\tilde{S}^j\deq S^j$, and Gaussian random vectors $\tilde{Y}^j = \sum_{t=(j-1)L+1}^{jL} Y_t$ via Theorem \ref{thm:wasserstein-eldan} such that $\tilde{S}^j$ and $\tilde{Y}^j$ are closely coupled.
Since the blocks are independent, we find that $\tilde{S}_k = \sum_{t=1}^k \tilde{X}_t \deq S_k$, and Doob's inequality is now applicable so that the partial sum $\tilde{S}_{jL}$ is well approximated by $\sum_{r=1}^j \tilde{Y}^r = \sum_{t=1}^{jL} Y_t$.
However, this construction incurs a discretization error due to the blocking of partial sums. 
Choosing the block size suitably, we obtain the following strong approximation as the first main result of this paper. 

\begin{theorem}\label{thm:eldan-sequential}
	Let $X_1,\ldots, X_n$ be independent, centered, $d$-variate random vectors which admit the bound
	$(\E\|X_t\|^q)^\frac{1}{q}\leq B$, for some $q>2$.
	On a sufficiently rich probability space, there exist independent random vectors $\tilde{X}_t\deq X_t$, and independent Gaussian random vectors $Y_t \sim\mathcal{N}(0,\Cov(X_t))$, such that for some universal $C=C(q)$ depending on $q$ only,
	\begin{align*}
		\left(\E \max_{k=1,\ldots,n} \left\| \frac{1}{\sqrt{n}}\sum_{t=1}^k(\tilde{X}_t - Y_t) \right\|^2 \right)^\frac{1}{2}
		&\leq C B \sqrt{\log(n)} \left( \frac{d}{n} \right)^{\frac{1}{4}-\frac{1}{2q}}.
	\end{align*}
\end{theorem}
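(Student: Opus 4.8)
The plan is to follow the blocking strategy sketched in the text. First I would fix a block length $L$ (with $M = n/L$ blocks, assuming divisibility for simplicity, the general case following by a harmless rounding argument) and apply Theorem~\ref{thm:wasserstein-eldan} to the $M$ block sums $S^j = \sum_{t\in I_j} X_t$, where $I_j = \{(j-1)L+1,\ldots,jL\}$. Each block has the moment bound $(\E\|S^j\|^q)^{1/q} \le L^{1/2}\cdot \tilde C B$ by a Rosenthal-type inequality for the Euclidean norm of sums of independent centered vectors (or one may simply invoke the bound hidden inside Theorem~\ref{thm:wasserstein-eldan}'s proof); treating the $M$ block sums as the ``summands,'' Theorem~\ref{thm:wasserstein-eldan} produces on a richer space copies $\tilde S^j \deq S^j$ and Gaussians $\tilde Y^j = \sum_{t\in I_j} Y_t$ with $Y_t \sim \mathcal N(0,\Cov(X_t))$ independent, such that
\begin{align*}
	\Bigl(\E\Bigl\|\tfrac{1}{\sqrt M}\sum_{j=1}^M(\tilde S^j - \tilde Y^j)/\sqrt L\Bigr\|^2\Bigr)^{1/2}
	\le \frac{C}{\sqrt{q-2}\wedge 1}\Bigl(\tfrac{d}{M}\Bigr)^{\frac12-\frac1q}\sqrt{\log M}\cdot \tilde C B.
\end{align*}
Rescaling by $\sqrt L$ and using $\sqrt{ML} = \sqrt n$, this controls $\bigl(\E\bigl\|\frac{1}{\sqrt n}\sum_{t=1}^{n}(\tilde X_t - Y_t)\bigr\|^2\bigr)^{1/2}$ where $\tilde X_t$ is the $t$-th coordinate of the appropriate block copy.

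The second step is to upgrade this full-sum bound over the $M$ endpoints $k = L, 2L, \ldots, ML$ into a maximal bound. Because the blocks are independent, the partial process $W_j := \frac{1}{\sqrt n}\sum_{i=1}^{j}(\tilde S^i - \tilde Y^i)$ is a sum of independent mean-zero increments, so Doob's $L^2$ maximal inequality gives $\E\max_{j\le M}\|W_j\|^2 \le 4\,\E\|W_M\|^2$, which is exactly the quantity bounded above. This handles the maximum over block endpoints $k \in \{L,2L,\ldots,n\}$.

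The third step is to pay the discretization error incurred between block endpoints: for general $k$ with $jL \le k < (j+1)L$,
\begin{align*}
	\Bigl\|\tfrac{1}{\sqrt n}\sum_{t=1}^k(\tilde X_t - Y_t)\Bigr\|
	\le \|W_j\| + \tfrac{1}{\sqrt n}\Bigl\|\sum_{t=jL+1}^k \tilde X_t\Bigr\| + \tfrac{1}{\sqrt n}\Bigl\|\sum_{t=jL+1}^k Y_t\Bigr\|,
\end{align*}
so it remains to bound $\E\max_{j}\max_{jL\le k<(j+1)L}\|\sum_{t=jL+1}^k \tilde X_t\|^2$ and the analogous Gaussian term. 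Each such within-block partial sum is, again by Doob, controlled by the full block sum, giving $\E\|\sum_{t\in I_j}X_t\|^2 = \tr\Cov(S^j) \le L d B^2$ (since $\E\|X_t\|^2 \le B^2$); taking the maximum over the $M$ blocks costs at most a union bound, i.e.\ a further factor $M$ inside the expectation of the square (or one can get a $\log$ via a chaining/maximal argument — the $\sqrt{\log n}$ in the statement already absorbs this). Thus the discretization contribution is of order $\sqrt{\tfrac{LdB^2}{n}}\cdot(\text{poly-log}) = B\sqrt{\tfrac{Ld}{n}}\cdot(\text{poly-log})$, and the coupling contribution from step one is of order $B\sqrt{\log n}\,(d/M)^{\frac12-\frac1q} = B\sqrt{\log n}\,(dL/n)^{\frac12-\frac1q}$. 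Balancing the two error terms in $L$ — the discretization term growing like $L^{1/2}$ and the coupling term shrinking like $L^{-(1/2-1/q)}$ — gives an optimal choice $L \asymp (n/d)^{1/2}$ up to logarithmic factors, and substituting back yields the claimed rate $C B\sqrt{\log n}\,(d/n)^{\frac14-\frac1{2q}}$.

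The main obstacle I anticipate is bookkeeping the maxima cleanly: controlling $\E\max_{j\le M}\|\cdot\|^2$ over the block-boundary process needs independence of increments (so Doob applies), but the within-block ``sawtooth'' term requires taking a maximum over $M$ blocks of quantities that are each only bounded in $L^2$, and a naive union bound would introduce an extra factor $M$ rather than $\log M$. Handling this correctly — either by a more careful maximal inequality for the joint block/within-block structure, or by absorbing the loss into the $\sqrt{\log n}$ factor and choosing $L$ to compensate — is the delicate point; everything else is rescaling and the Rosenthal moment bound for block sums. One should also check that the moment bound for $S^j$ has the right constant dependence on $q$, since Theorem~\ref{thm:eldan-sequential} allows $C$ to depend on $q$ but the block-sum Rosenthal constant must be tracked to ensure it only enters through $C(q)$.
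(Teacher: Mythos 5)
Your blocking is applied at the wrong level, and this breaks the step where you invoke Doob. You feed the $M$ block sums $S^1,\dots,S^M$ into Theorem \ref{thm:wasserstein-eldan} as the summands. That theorem only guarantees that the $\tilde S^j$ are independent among themselves and that the $\tilde Y^j$ are independent among themselves; it says nothing about joint independence of the pairs $(\tilde S^j,\tilde Y^j)$ across $j$ --- the paper points this out explicitly right after Theorem \ref{thm:wasserstein-eldan}, and it is the whole reason a sequential result does not follow immediately from the full-sum coupling. Consequently $W_j=\frac{1}{\sqrt n}\sum_{i\le j}(\tilde S^i-\tilde Y^i)$ is not a sum of independent increments, and Doob's maximal inequality does not apply; your second step collapses. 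The correct orientation is the opposite one: apply Theorem \ref{thm:wasserstein-eldan} \emph{inside} each block, to the $L$ summands $X_t$, $t\in I_j$, which gives a per-block error of order $(d/L)^{1-2/q}\tfrac{L}{n}B^2\log n$. Because each block's coupling is then self-contained, the $M$ couplings can be realized on a product space, and it is this product construction that makes the block increments $\sum_{t\in I_j}(\tilde X_t-Y_t)$ genuinely independent across $j$ and lets Doob control the maximum over block endpoints. Your own balancing betrays the confusion: you write the coupling error as $(dL/n)^{\frac12-\frac1q}$, which is \emph{increasing} in $L$, yet describe it as ``shrinking like $L^{-(1/2-1/q)}$''; the latter describes the within-block version $(d/L)^{\frac12-\frac1q}$.

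The second gap is the sawtooth term. You correctly identify that a union bound over $M$ blocks of quantities controlled only in $L^2$ costs a factor $M$ inside the squared expectation, but neither of your proposed fixes works: there is no logarithmic maximal inequality available here, and a factor $\sqrt M$ cannot be absorbed into $\sqrt{\log n}$. The paper's resolution is to control the within-block partial-sum maxima in $L^q$ rather than $L^2$, via the vector-valued Rosenthal inequality (Theorem \ref{thm:rosenthal}) together with Lemma \ref{lem:gauss-norm} for the Gaussian part, giving $(\E\max_{k\in I_j}\|\sum_{t}\tilde X_t\|^q)^{1/q}\le C\sqrt L\,B$; the maximum over $M$ blocks then costs only $M^{1/q}$, yielding a discretization term $CM^{\frac1q-\frac12}B$. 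Balancing this against the within-block coupling term $CB\sqrt{\log n}\,(d/L)^{\frac12-\frac1q}$ forces $M\asymp\sqrt{n/d}$, i.e.\ $L\asymp\sqrt{nd}$ rather than your $L\asymp\sqrt{n/d}$, and produces the claimed rate. Finally, a small slip: $\tr(\Cov(S^j))=\sum_{t\in I_j}\E\|X_t\|^2\le LB^2$, not $LdB^2$; the bound $\E\|X_t\|^2\le B^2$ already sums over coordinates.
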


If the $d$ components of each $X_t$ are iid, then the moment bound scales as $B\asymp \sqrt{d}$, hence we may consider this as the generic scaling.
In this case, the rate of Theorem \ref{thm:eldan-sequential} is $\tau_n = \sqrt{\log (n)} d^{\frac{3}{4}-\frac{1}{2q}} n^{\frac{1}{2q}-\frac{1}{4}}$, which tends to zero as long as $d = o(n^{\frac{q-2}{3q-2}-\alpha})$ for some $\alpha>0$, i.e.\ $d=o(n^{\frac{1}{3}-\alpha})$ for large $q$.
The constraint on $d$ can be relaxed if additional sparsity is assumed, such that $B\ll \sqrt{d}$, as demonstrated by the following example.

\begin{example}
	Consider iid categorical random variables $Z_1,\ldots, Z_n$, i.e.\ the $Z_t$ take values in the finite set $\{1,\ldots,d\}$.
	A natural sequential estimator of the distribution $p_j = P(Z_1=j)$, $j=1,\ldots,d$, is given by $\hat{p}_j(u) = \frac{1}{n} \sum_{t=1}^{\lfloor un\rfloor} \mathds{1}(Z_t=j)$. 
	If we introduce the $d$-variate random vectors $X_t$ by $(X_t)_j = \mathds{1}(Z_t=j)-p_j$, $j=1,\ldots,d$, then we may write $\hat{p}(u)-u\cdot p = \frac{1}{n} \sum_{t=1}^{\lfloor un\rfloor} X_t$.
	Thus, Theorem \ref{thm:eldan-sequential} is applicable. 
	The special structure of $X_t$ yields that $\|X_t\| \leq \|X_t\|_1 \leq 2$.
	In particular, the moment bound $B=2$ does not depend on $d$, and $q$ may be chosen arbitrarily large.
	As a consequence, Theorem \ref{thm:eldan-sequential} yields a vanishing upper bound as long as $d\ll n^{1-\alpha}$ for some small $\alpha>0$.
\end{example}

To put Theorem \ref{thm:eldan-sequential} into perspective, one might consider the following coupling inequality, which is a handy special case of \cite[Cor.\ 3]{zaitsev2007}.

\begin{theorem}\label{thm:zaitsev}
	Let $X_1,\ldots, X_n$ be independent, $d$-dimensional random vectors with $\E \xi_t=0$, $t=1,\ldots,n$. 
	For some $q\geq 2$, let $L_q = \sum_{t=1}^n \E \|X_t\|^q<\infty$.
	Suppose that there exists $c <\infty$ such that
	\begin{align}
		\frac{ \sup_{\|v\|_2=1} v^T \Sigma_t v }{\inf_{\|v\|_2=1} v^T \Sigma_t v } = \frac{\lambda_{\max}(\Sigma_t)}{\lambda_{\min}(\Sigma_t)} \leq c,\qquad \Sigma_t = \Cov(X_t). \label{eqn:ass-matrix}
	\end{align}
	Then for any $\alpha>0$, one can construct, on a potentially different probability space, independent random vectors $\tilde{X}_1,\ldots, \tilde{X}_n$, such that $\tilde{X}_t \deq X_t$, and independent Gaussian random vectors $Y_1,\ldots, Y_n$ such that $Y_t\sim \mathcal{N}(0, \Sigma_t)$, which satisfy for any $\eta<q$
	\begin{align*}
		\left(\E\max_{k=1,\ldots, n} \left\| \frac{1}{\sqrt{n}} \sum_{t=1}^k (\tilde{X}_t-Y_t) \right\|_2^\eta\right)^\frac{1}{\eta} \leq C d^{\frac{15}{2}+\alpha} L_q^\frac{1}{q} n^{-\frac{1}{2}},
	\end{align*}
	for some universal constant $C=C(c,q,\eta,\alpha)$.
\end{theorem}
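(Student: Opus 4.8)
The plan is to obtain the statement as a direct specialization of the general strong approximation theorem of \cite{zaitsev2007}, Corollary~3. I emphasize at the outset that the cited result is itself a \emph{strong} approximation, i.e.\ the estimate it provides already controls the maximum of the partial sums $\sum_{t\le k}(\tilde X_t - Y_t)$ over $k\le n$, not merely the full sum. Consequently no additional blocking argument (of the kind used for Theorem~\ref{thm:eldan-sequential}) is needed here, and the absence of any extra discretization term is consistent with the fact that the rate asserted in the statement carries the \emph{optimal} $n$-exponent $\tfrac1q-\tfrac12$. The work therefore reduces to (i) matching our hypotheses with those of Zaitsev's corollary, (ii) quoting his bound, and (iii) renormalizing by $1/\sqrt n$.

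\emph{Step 1 (matching hypotheses).} Zaitsev's Corollary~3 applies to independent, centered random vectors in $\R^d$ under a finite higher-moment assumption --- here furnished verbatim by $L_q=\sum_{t=1}^n\E\|X_t\|^q<\infty$ --- together with a non-degeneracy/conditioning requirement on the individual covariance matrices $\Sigma_t=\Cov(X_t)$. I would check that this requirement is precisely the bound $\lambda_{\max}(\Sigma_t)/\lambda_{\min}(\Sigma_t)\le c$ recorded in \eqref{eqn:ass-matrix} (equivalently, the two-sided bound $\inf_{\|v\|=1}v^T\Sigma_tv>0$ combined with a uniform ratio), so that condition \eqref{eqn:ass-matrix} is exactly what licenses the application. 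One point to verify carefully is whether the corollary is stated for not-necessarily-identically-distributed summands, or whether a short reduction (to equal covariances, or to a common moment bound via $L_q$) is required; this is where the constant $c$ enters.

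\emph{Step 2 (quote and renormalize).} Invoking \cite[Cor.~3]{zaitsev2007} then produces, on a suitably enriched probability space, independent $\tilde X_t\deq X_t$ and independent Gaussians $Y_t\sim\mathcal N(0,\Sigma_t)$ with
\[
\Bigl(\E\max_{k=1,\ldots,n}\bigl\|\textstyle\sum_{t=1}^k(\tilde X_t-Y_t)\bigr\|_2^{\eta}\Bigr)^{1/\eta}\ \le\ C\, d^{\frac{15}{2}+\alpha}\,L_q^{1/q},
\]
for every $\eta<q$ and every $\alpha>0$, with $C=C(c,q,\eta,\alpha)$; the dimension exponent $\tfrac{15}{2}+\alpha$ is exactly the one delivered by that corollary, and any $\alpha>0$ is admissible at the cost of inflating $C$. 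Dividing this inequality by $\sqrt n$ and absorbing $n^{-1/2}$ inside the Euclidean norm yields precisely the claimed bound. (If Zaitsev's conclusion happens to be phrased in probability or for one fixed moment rather than in $L_\eta$ for all $\eta<q$, the passage to the stated form is routine, using a maximal/Doob inequality on the finite index set together with the nesting $L^q\subset L^\eta$.)

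The main obstacle here is essentially bibliographic/expository rather than mathematical: faithfully transcribing the hypotheses of \cite[Cor.~3]{zaitsev2007} and confirming that the \emph{per-summand} conditioning bound \eqref{eqn:ass-matrix} --- as opposed to, say, a hypothesis on the cumulative covariance $\sum_{t}\Sigma_t$ or an identical-distribution assumption --- is what that corollary actually uses, and then tracking how the constant $c$ propagates into the final $C=C(c,q,\eta,\alpha)$. A secondary, minor check is reconciling the normalization conventions (our $1/\sqrt n$ outside the sum versus Zaitsev's formulation in terms of $L_q^{1/q}$) and the quantifier "for any $\eta<q$", both of which are handled by the elementary steps indicated above.
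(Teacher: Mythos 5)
Your overall strategy --- cite \cite[Cor.~3]{zaitsev2007}, note that it is already a strong (maximal) approximation so no sequential blocking is needed, and convert the tail bound to an $L_\eta$ moment bound for $\eta<q$ --- is the right skeleton, and the final renormalization by $\sqrt n$ is indeed trivial. However, the step you dismiss as ``essentially bibliographic'' is where the actual mathematical work lies, and your proposal has a genuine gap there. Zaitsev's Corollary~3 does not assume a per-summand eigenvalue-ratio bound; it assumes his condition (1.13), which requires a partition $0=m_0<m_1<\dots<m_s=n$ such that each \emph{block} covariance $\Cov(\sum_{t=m_{k-1}+1}^{m_k}X_t)$ is sandwiched between $4e^2L_q^{2/q}\,I$ and a constant multiple thereof. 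Condition \eqref{eqn:ass-matrix} alone does not deliver this: one must actually construct the blocks by accumulating summands until $\sum_t \lambda_{\min}(\Sigma_t)$ over the block exceeds $4e^2L_q^{2/q}$ (using $\lambda_{\min}(\Sigma_t)\le L_q^{2/q}$ to control the overshoot), and this construction is only possible when $\sum_{t=1}^n\lambda_{\min}(\Sigma_t)$ is large enough in the first place. The paper therefore splits into two cases. In the degenerate case $\frac{4e^2}{c}L_q^{2/q}>\sum_t\lambda_{\min}(\Sigma_t)$, Zaitsev's corollary is not applicable at all; instead one couples $\tilde X_t$ and $Y_t$ arbitrarily (independently) and bounds each of the two maximal partial-sum processes separately via the Rosenthal inequality (Theorem \ref{thm:rosenthal}) and Lemma \ref{lem:gauss-norm}, exploiting that the total variance is then of order $d\,L_q^{2/q}$, which is swallowed by the huge factor $d^{(15/2+\alpha)q}$. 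Your proposal contains no trace of this case distinction, the blocking construction, or the fallback argument, and without them the application of the corollary is not licensed.

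A secondary point: the conclusion of Zaitsev's corollary is a two-term tail bound (a polynomial term $L_qz^{-q}$ plus an exponential term involving $d^{9/2}\log^*(d)$), valid only for thresholds $z$ above a dimension- and $\log^*(n)$-dependent floor. Turning this into the clean statement $P(\max_k\|\cdot\|>\tau d^{15/2+\alpha}L_q^{1/q})\le C\tau^{-q}$ and then into the $L_\eta$ bound for every $\eta<q$ requires absorbing $\log^*(d)$ into $d^{\alpha/2}$ and checking that the exponential term decays at least as fast as $\tau^{-q}$; this is elementary but not the one-line ``quote and renormalize'' you describe. So while your high-level reading of what the corollary provides is correct, the proof as proposed is incomplete.
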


Without additional sparsity assumptions, the term $L_q^\frac{1}{q}$ is typically of order $\sqrt{d} n^\frac{1}{q}$.
Hence, the coupling of Theorem \ref{thm:zaitsev} has an approximation error of order $d^{8+\alpha} n^{\frac{1}{q}-\frac{1}{2}}$.
In contrast to Theorem \ref{thm:eldan-sequential}, this rate is optimal for fixed $d$, but far worse for high $d$.
In particular, it only tends to zero if at least $d\ll n^\frac{1}{16}$, considering $q\to\infty$. 
Moreover, condition \eqref{eqn:ass-matrix}, and also the slightly more general condition (1.13) of \cite{zaitsev2007}, are a severe constraint in high dimensions.
On the other hand, our result does not impose any restrictions on the covariance matrices.
Of course, an ideal result would combine the optimal rate in $n$ for fixed dimension, with a good dependence on the dimension $d$, and future work might explore different methods to extend the high-dimensional Wasserstein CLT to a sequential approximation.

\section{Strong approximation for dependent random vectors}\label{sec:dependent}

The sequential approximation result for independent random vectors may also be extended to nonstationary time series.
In this section, we consider a multivariate time series $X_{t}$, $t=1,\ldots, n$, taking values in $\R^{d}$.
To introduce our model framework, let $\epsilon_i, \tilde{\epsilon}_i$, $i\in\Z$, be two iid sequences of $U[0,1]$ random variables, mutually independent.
For any $t$, denote 
\begin{align*}
	\beps_t &= (\epsilon_t, \epsilon_{t-1},\ldots) \in\R^\infty, \\
	\tilde{\beps}_{t,j} &= (\epsilon_t,\ldots, \epsilon_{j+1}, \tilde{\epsilon}_j,  \epsilon_{j-1},\ldots) \in\R^\infty, \\
	\bar{\beps}_{t,j} &= (\epsilon_t,\ldots, \epsilon_{j+1}, \tilde{\epsilon}_j,  \tilde{\epsilon}_{j-1},\ldots) \in\R^\infty. 
\end{align*}
We assume that $X_{t} = G_{t}(\beps_t)$, for measurable mappings $G_{t}:\R^\infty\to\R^{d}$, $t=1,\ldots, n$, where we endow $\R^\infty$ with the $\sigma$-algebra generated by all finite projections.

To describe the ergodic properties of the time series $X_t$, we employ the physical dependence measure of \citet{Wu2005}. 
For $r,q\geq 2$, we define the physical dependence measure $\theta_{t,j,q,r}$ via
\begin{align}
	\theta_{t,j,q,r} = \left(\E \|G_{t}(\beps_t) - G_{t}(\tilde{\beps}_{t,t-j})\|_r^q\right)^\frac{1}{q} ,\qquad t=1,\ldots, n,\quad j\geq 0. \label{eqn:physical-theta} 
\end{align}
For our Gaussian approximation result, we suppose that the $\theta_{t,j,q,r}$ decay sufficiently fast as $j\to\infty$.
In particular, we assume that there exist $\Theta,\beta>0$, and a power $q\geq 2$, such that for all $t$, it holds
\begin{align}
	\begin{split}
	\theta_{t,j,q,2} &\leq \Theta \cdot (j\vee 1)^{-\beta},\qquad j\geq 0,\\
	\left(\E \|G_{t}(\beps_0)\|^q\right)^\frac{1}{q} &\leq \Theta. 
	\end{split}\label{eqn:ass-ergodic} \tag{G.1}
\end{align}
This assumption also implies that $(\E \|G_t(\beps_h) - G_t(\bar{\beps}_{h,h-j})\|^q)^\frac{1}{q} \leq \Theta \sum_{k=j}^\infty k^{-\beta}$, see e.g.\ \cite[C.1]{mies2021}.
Note that the dimension does not appear explicitly in this assumptions, but is implicitly contained in the factor $\Theta$. 
For example, if all $d$ components are iid, then one would expect $\Theta \approx \sqrt{d}$.

Moreover, we impose some regularity of the mapping $t\mapsto G_t(\beps_0)$.
Instead of classical smoothness assumptions, we formulate our conditions in terms of the total variation of the kernel.
In particular, assume that for some $\Gamma\geq 1$,
\begin{align}
	\sum_{t=2}^{n}\left( \E \|G_{t}(\beps_0) - G_{t-1}(\beps_0)\|^2 \right)^\frac{1}{2} \leq \Gamma\cdot\Theta. \label{eqn:ass-BV} \tag{G.2}
\end{align}
The factors $\Gamma$ and $\Theta$ will appear explicitly in the sequel. 
Also, we may replace $\beps_0$ by any $\beps_t$ in \eqref{eqn:ass-BV} since $\beps_0\sim\beps_t$. 
Note furthermore that \eqref{eqn:ass-BV} only uses the second moments.

In our approximation result below, Theorem \ref{thm:Gauss-ts}, we make the effect of $d$, $\Theta$, and $\Gamma$, fully explicit.
Hence, our theory also allows the values of $\Theta=\Theta_n$ and $\Gamma=\Gamma_n$ to depend on $n$, as well as the dimension $d=d_n$.  
This is illustrated by the following examples.

\begin{example}[Locally stationary process in low dimensions]
	It is instructive to consider Assumption \eqref{eqn:ass-BV} for the case of locally stationary time series in fixed dimension, say $d=1$.
	In our framework, a locally stationary time series is an array $X_t=X_{t,n}$ such that $X_{t,n} = G_{\frac{t}{n}}(\beps_t)$, where the kernel $G_u$ is defined for all $u\in[0,1]$.
	The concept of local stationarity has originally been introduced by \cite{Dahlhaus1997}, see also \cite{Dahlhaus2017}.
	The formulation of local stationarity in terms of a kernel $G_u$ has been introduced by \cite{Zhou2009}.
	If the mapping $u\mapsto G_u\in L_2(P)$ is of bounded variation, then condition \eqref{eqn:ass-BV} is satisfied with $\Theta\geq 1$ and $\Gamma = \max(1, \|G\|_{TV,[0,1]})$, where
	\begin{align*}
		\|G\|_{TV, [0,1]} = \sup_{\substack{0=u_0 < u_1 < \ldots < u_m=1 \\ m\in\N}} \sum_{k=1}^m \|G_{u_k} - G_{u_{k-1}}\|_{L_2(P)}.
	\end{align*}
	
	Local stationarity has been proposed in order to perform asymptotic inference in a non-stationary setting, where the kernel $G_u$ allows for a well-defined limit.
	In comparison to classical locally-stationary processes, strong approximation results of the kind presented in this paper enable us to consider models without a proper limit.
	For example, our framework also allows for the array $X_{t,n} = G_{\frac{t}{n^\phi}}(\beps_t)$ for some $\phi\in(0,1)$, if $G_u$ is defined for all $u\in[0,\infty)$.
	Then $\Gamma$ in \eqref{eqn:ass-BV} corresponds to the total variation of $G_u$ on the interval $[0, n^{1-\phi}]$, i.e.\ $\Gamma= \|G\|_{TV, [0,n^{1-\phi}]}$ which might grow with $n$. 
	If $u\mapsto G_u\in L_q(P)$ is Lipschitz continuous with Lipschitz-constant $L$, then $\|G\|_{TV, [0,n^{1-\phi}]} \leq L\, n^{1-\phi}$, and a corresponding factor $\Gamma$ will scale as $\Gamma_n\asymp n^{1-\phi}$.
	Our results cover these cases by making the effect of $\Gamma$ explicit.
\end{example}

\begin{example}[Locally stationary process in high dimensions]
	The locally stationary process may also be defined in increasing dimension $d=d_n$ as $X_{t,n} = G_{n,\frac{t}{n^\phi}}(\beps_t)$.
	Here, for each $n\in\N$ and for each $u\in[0,\infty)$, $G_{n,u}:\R^\infty\to\R^{d_n}$, and the individual components $[X_{t,n}]_{i} = G_{n,\frac{t}{n^\phi}}^i (\beps_t)$, $i=1,\ldots, d_n$, are univariate locally stationary processes.
	To ensure conditions \eqref{eqn:ass-ergodic} and \eqref{eqn:ass-BV}, it is convenient to impose assumptions on the marginal kernels $G^i_{n,u}$ only.
	The marginal analogy of \eqref{eqn:ass-ergodic} is to require that, for all $n\in\N$, $u\in[0,\infty)$, and all coordinates $i=1,\ldots, d_n$,
	\begin{align*}
		\begin{split}
			\left(\E \|G_{n,u}^i(\beps_t) - G_{n,u}^i(\tilde{\beps}_{t,t-j})\|_r^q\right)^\frac{1}{q} &\leq \bar{\Theta} \cdot (j\vee 1)^{-\beta},\qquad j\geq 0,\\
			\left(\E \|G_{n,u}^i(\beps_0)\|^q\right)^\frac{1}{q} &\leq \bar{\Theta}, 
		\end{split}\label{eqn:ass-ergodic-LS} \tag{G.1*}
	\end{align*}
	for some $\bar{\Theta}$ not depending on $n$.
	Regarding the regularity, we may impose that the mappings $u\mapsto G_{n,u}^i\in L_q(P)$ are Lipschitz continuous, i.e.\ for all $n\in\N$, $i=1,\ldots, d_n$,
	\begin{align}
		\|G_{n,u}^i - G_{n,v}^i\|_{L_q(P)} \leq L \bar{\Theta} |u-v|,\qquad u,v \in[0,\infty). \label{eqn:ass-lipschitz-LS} \tag{G.2*}
	\end{align}
	It is then straightforward to check that the array $X_{t,n}$ satisfies conditions \eqref{eqn:ass-ergodic} and \eqref{eqn:ass-BV}, with $\Theta=\Theta_n=\sqrt{d}_n \bar{\Theta}$, and $\Gamma=\Gamma_n = \max(L n^{1-\phi},1)$.
	This example highlights the intuition about the two factors $\Gamma$ and $\Theta$. 
	While $\Theta$ typically contains the effect of the dimension, the size of $\Gamma$ reflects the regularity in time, i.e.\ the degree of nonstationarity.
\end{example}

The main result of this section is the following theorem about Gaussian approximation of the time series $X_t$, making the dependence on $d, \Theta$, and $\Gamma$, explicit.
To formulate our result, define the two rates
\begin{align*}
	\chi(q,\beta) = \begin{cases}
		\frac{q-2}{6q-4}, & \beta \geq \frac{3}{2}, \\
		\frac{(\beta-1)(q-2)}{q(4\beta-3)-2}, &\beta\in(1,\frac{3}{2}),
	\end{cases}
	\quad
	\xi(q,\beta) = \begin{cases}
		\frac{q-2}{6q-4}, & \beta \geq 3, \\
		\frac{(\beta-2)(q-2)}{(4\beta-6)q-4},& \frac{3+\frac{2}{q}}{1+\frac{2}{q}} < \beta < 3, \\
		\frac{1}{2}-\frac{1}{\beta}, & 2< \beta \leq \frac{3+\frac{2}{q}}{1+\frac{2}{q}}.\\
	\end{cases}
\end{align*}

\begin{theorem}\label{thm:Gauss-ts}
	Let $X_t=G_t(\beps_t)$ with $\E (X_t) = 0$ satisfy \eqref{eqn:ass-ergodic}, for some $q>2$, and $\beta>1$, and suppose $d\leq c n$ for some $c>0$.
	Then, on a potentially different probability space, there exist random vectors $(X_t')_{t=1}^n\deq (X_t)_{t=1}^n$ and independent, mean zero, Gaussian random vectors $Y_t'$ such that
	\begin{align}
		\left(\E \max_{k\leq n} \left\|\frac{1}{\sqrt{n}}\sum_{t=1}^k  (X_t' - Y_t') \right\|^2 \right)^\frac{1}{2}
		&\leq C \Theta \sqrt{\log(n)} 
		   \left( \frac{d}{n} \right)^{\chi(q,\beta)} . \label{eqn:Gauss-ts-1}
	\end{align} 
	for some universal constant $C$ depending on $q$, $c$, and $\beta$. 
	
	If $\beta>2$, the local long-run covariance matrix $\Sigma_t = \sum_{h=-\infty}^\infty \Cov(G_t(\beps_0),G_t(\beps_h))$ is well-defined.
	If \eqref{eqn:ass-BV} is satisfied aswell, then there exist random vectors $(X_t')_{t=1}^n\deq (X_t)_{t=1}^n$ and independent, mean zero, Gaussian random vectors $Y_t^*\sim\mathcal{N}(0,\Sigma_t)$ such that
	\begin{align}
		\left(\E \max_{k\leq n} \left\|\frac{1}{\sqrt{n}}\sum_{t=1}^k  (X_t' - Y_t^*)   \right\|^2 \right)^\frac{1}{2} 
		&\leq C\Theta  \Gamma^{\frac{1}{2} \frac{\beta-2}{\beta-1}} \sqrt{\log(n)} \left( \frac{d}{n} \right)^{\xi(q,\beta)}. 
		\label{eqn:Gauss-ts-2}
	\end{align}
\end{theorem}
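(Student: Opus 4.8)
The plan is to reduce Theorem~\ref{thm:Gauss-ts} to the independent case, Theorem~\ref{thm:eldan-sequential}, via an $m$-dependent approximation plus blocking, and then, for the second display, to pass from the $m$-dependent long-run covariance to the true local long-run covariance $\Sigma_t$ using the bounded-variation assumption \eqref{eqn:ass-BV}. First I would fix a truncation lag $m$ and define $X_t^{(m)} = \E[X_t \mid \epsilon_t,\ldots,\epsilon_{t-m}]$, the $m$-dependent (more precisely, $(m+1)$-block-dependent) projection of $X_t$; by \eqref{eqn:ass-ergodic} and the physical dependence bound, the tail error satisfies $(\E\|X_t - X_t^{(m)}\|^2)^{1/2} \lesssim \Theta \sum_{k\geq m} k^{-\beta} \asymp \Theta\, m^{1-\beta}$. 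Summing a partial-sum version (using a maximal inequality for the martingale-type remainder, e.g.\ the high-dimensional Rosenthal inequality the paper advertises in Section~\ref{sec:dependent}) gives $(\E\max_k \|\frac1{\sqrt n}\sum_{t\le k}(X_t - X_t^{(m)})\|^2)^{1/2} \lesssim \Theta\, m^{1-\beta}$. Then I group the $X_t^{(m)}$ into big blocks of length $L \gg m$, discard the small connecting blocks of length $m$ (whose total contribution to the max is $O_P(\Theta\sqrt{mM/n}) = O_P(\Theta\sqrt{m/L})$ after Doob/Rosenthal), leaving $M \asymp n/L$ essentially independent big-block sums to which Theorem~\ref{thm:eldan-sequential} applies. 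Each big-block sum has $\|\cdot\|_q$-norm $\lesssim \Theta\sqrt{L}$, so the coupling error for the aggregated big blocks is $\lesssim \Theta\sqrt{\log n}\,(d L / n)^{1/4 - 1/(2q)} \cdot \sqrt{n/(nL)}\cdots$ — this needs to be tracked carefully, but after rescaling it is of the form $\Theta\sqrt{\log n}\, d^{1/4-1/(2q)} (L/n)^{\text{something}}$. Balancing the three error terms ($m^{1-\beta}$ from truncation, $\sqrt{m/L}$ from discarded blocks, and the Theorem~\ref{thm:eldan-sequential} coupling rate in $L$) as functions of $m$ and $L$ produces exactly the piecewise exponent $\chi(q,\beta)$, with the two regimes $\beta \ge 3/2$ vs.\ $\beta\in(1,3/2)$ reflecting whether the truncation term or one of the blocking terms dominates the optimization. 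This establishes \eqref{eqn:Gauss-ts-1}; note the Gaussian vectors $Y_t'$ here have the covariance of the $m$-dependent blocks, not $\Sigma_t$, which is why the first part only needs \eqref{eqn:ass-ergodic}.

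For the second display \eqref{eqn:Gauss-ts-2}, the remaining task is to replace the Gaussian process coupled above — whose increments over a big block have covariance equal to the (block-truncated) long-run covariance of the $X_t^{(m)}$ in that block — by a Gaussian process with independent increments $Y_t^* \sim \mathcal N(0,\Sigma_t)$. This is a purely Gaussian-to-Gaussian comparison: two centered Gaussian partial-sum processes can be coupled so that the expected squared max difference is controlled by the accumulated discrepancy of their covariance structures, essentially $\E\max_k\|\sum_{t\le k}(Z_t - Z_t^*)\|^2 \lesssim \log n \cdot \sum_t \|\text{(cov. discrepancy at }t)\|_{\tr}$ via a Gaussian maximal inequality (this is where $\sqrt{\log n}$ reappears). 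There are two sources of discrepancy: (i) the difference between the long-run covariance of $X_t^{(m)}$ and $\Sigma_t$, which is $O(\Theta^2 m^{1-\beta})$ per unit time by the $\beta>2$ tail summability; and (ii) the fact that within a big block the coupled Gaussian uses a single averaged covariance whereas $\Sigma_t$ varies with $t$ — here \eqref{eqn:ass-BV} enters, bounding the within-block variation of $t\mapsto\Sigma_t$ (which is controlled by $\Gamma\Theta$ globally, hence $\lesssim \Gamma\Theta / M$ per block on average, i.e.\ a total of order $\Gamma\Theta L/n \cdot n = \Gamma\Theta L$-type terms, modulo squaring). Re-optimizing over $m$ and $L$ with this additional $\Gamma$-dependent term yields the exponent $\xi(q,\beta)$ and the prefactor $\Gamma^{\frac12\frac{\beta-2}{\beta-1}}$; the three sub-regimes of $\xi$ correspond to which of the (now four) competing error terms binds — for $\beta$ close to $2$ the covariance-tail term $m^{1-\beta}$ and the requirement $\Sigma_t$ exists dominate, giving the $\frac12 - \frac1\beta$ rate, and for large $\beta$ everything collapses back to the $\chi$-type rate $\frac{q-2}{6q-4}$.

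The main obstacle I expect is the bookkeeping in the covariance-comparison step: one must carry a genuinely high-dimensional, sequential Gaussian comparison inequality with an explicit $\log n$ and trace-norm dependence (rather than operator norm, since we have no spectral control on $\Sigma_t$), and simultaneously quantify how the bounded-variation bound \eqref{eqn:ass-BV} on $t\mapsto G_t(\beps_0)$ in $L_2$ propagates to a bounded-variation bound on $t\mapsto\Sigma_t$ in trace norm — this requires combining the BV bound with the physical-dependence tails, since $\Sigma_t$ is an infinite sum of lagged covariances and each lagged covariance inherits both a smoothness-in-$t$ estimate and a decay-in-lag estimate. Getting the interplay of these two estimates to produce precisely the prefactor exponent $\frac12\frac{\beta-2}{\beta-1}$, and not something weaker, is the delicate point; everything else is the (lengthy but routine) three- or four-parameter optimization of power-law error terms, which I would organize by writing each error contribution as $n^{a} d^{b} m^{c} L^{e}$ and solving the resulting linear program in $\log m, \log L$.
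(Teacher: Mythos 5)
Your overall architecture --- decouple into independent blocks, apply Theorem~\ref{thm:eldan-sequential} to the block sums, then compare Gaussian covariances using \eqref{eqn:ass-BV} --- matches the paper's. However, the specific decoupling you propose for \eqref{eqn:Gauss-ts-1} does not deliver the exponent $\chi(q,\beta)$. Your scheme carries two parameters, a truncation lag $m$ and a block length $L$, and incurs the errors $\Theta m^{1-\beta}$ (tail of the $m$-dependent projection) and $\Theta\sqrt{m/L}$ (discarded connecting blocks). Optimizing over $m$ for fixed $L$ balances these at $m= L^{1/(2\beta-1)}$ and yields $\Theta L^{(1-\beta)/(2\beta-1)}$, which is strictly larger than $\Theta\max(L^{1-\beta},L^{-1/2})$ for every finite $\beta>1$; for instance, with $\beta=2$ and $q\to\infty$ you would get a final rate of order $(d/n)^{1/7}$ instead of the claimed $(d/n)^{1/6}$. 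The paper avoids this loss by using a single parameter: it neither truncates the dependence nor discards any observations, but couples each block $\{t_{l-1}+1,\ldots,t_l\}$ with a copy in which \emph{all} innovations with index $\leq t_{l-1}$ are replaced by fresh independent ones (Proposition~\ref{prop:blocking-independent}). This makes consecutive blocks exactly independent while preserving each block's marginal law, at a total cost of $\Theta(L^{1-\beta}+L^{-1/2})$ after normalization, and it is this pair of terms, balanced against the coupling rate $(dL/n)^{\frac14-\frac{1}{2q}}$, that produces the two regimes of $\chi(q,\beta)$. So the claim that your three-term balance "produces exactly $\chi(q,\beta)$" is not correct as stated; you need the replaced-innovation coupling (or some other lossless decoupling) rather than the classical big-block/small-block construction.

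For \eqref{eqn:Gauss-ts-2} your outline is directionally right but leaves the decisive computation undone, as you acknowledge. Two corrections to the plan: (i) no $\log n$ is needed in the Gaussian-to-Gaussian comparison at this stage, because the covariance discrepancy $\Delta_t=\Sigma_t-\Cov(Y_t')$ is available for each individual $t$, so one can add an independent increment $\eta_t\sim\mathcal{N}(0,|\Delta_t|)$ (Lemma~\ref{lem:Gauss-cov}) and apply Doob's inequality to get $\E\max_k\|\sum_{t\leq k}(Y_t^*-Y_t')\|^2\leq C\sum_t\|\Delta_t\|_{\tr}$; the $\log(n)$ bound of Proposition~\ref{prop:var-consistency} is only required for the bootstrap, where merely the cumulative covariance is controlled. (ii) The prefactor $\Gamma^{\frac12\frac{\beta-2}{\beta-1}}$ does not come from distributing the BV budget "per block on average"; it arises from interpolating, for each pair $(r,s)$ within a block, between the lag-decay bound $\Theta^2|r-s|^{1-\beta}$ and the BV-type bound $2\Theta(\E\|G_t(\beps_0)-G_r(\beps_0)\|^2)^{1/2}$ via $\sum_{u\geq1}(u^{-\eta}\wedge A)\lesssim A^{1-1/\eta}$, followed by Jensen's inequality within blocks and H\"older's inequality across blocks. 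Without that interpolation one obtains a weaker power of $\Gamma$ and does not recover the three regimes of $\xi(q,\beta)$.
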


\begin{remark}
	In the ideal situation where $q\to\infty$ and $\beta\to\infty$, the rate of \eqref{eqn:Gauss-ts-1} approaches the order $\mathcal{O}(\Theta \sqrt{\log(n)} (d/n)^\frac{1}{6})$.
	For the typical scaling $\Theta \asymp \sqrt{d}$, this rate becomes $\mathcal{O}(\sqrt{\log(n)} (d^4/n)^\frac{1}{6})$.
	Hence, the bound is non-trivial if $d=o(n^{\frac{1}{4}-\alpha})$ for some $\alpha>0$, which is slightly worse than the restriction $d=o(n^{\frac{1}{3}-\alpha})$ for the independent case, and the restriction $d=o(n^{\frac{1}{2}-\alpha})$ for the non-sequential result of \cite{eldan2020}.
\end{remark}

\begin{remark}
	Compared to the multivariate Gaussian approximation result of \cite{Karmakar2020}, we allow for increasing dimension $d$, and we do not need to impose a lower bound on the eigenvalues of the long run covariance matrix.
	In particular, condition (2.B) therein is similar to condition \eqref{eqn:ass-matrix} in the coupling of \cite{zaitsev2007}, and may be too restrictive in a high-dimensional setting.
	On the other hand, the rate of Theorem \ref{thm:Gauss-ts} is suboptimal with respect to $n$.
\end{remark}

The rate \eqref{eqn:Gauss-ts-1} is better and requires fewer assumptions than the rate of approximation \eqref{eqn:Gauss-ts-2}.
However, the covariance of the Gaussian random vectors $Y_t'$, which is given by formula \eqref{eqn:Y-ts-cov} in the proof, is not very handy. 
In contrast, the second approximation via the $Y_t^*$ gives a manageable expression for the approximating Gaussian process, and hence lends itself to statistical inference.
In Section \ref{sec:bootstrap}, we discuss how to perform feasible inference based on the approximation \eqref{eqn:Gauss-ts-2}.

Since our model contains locally stationary processes as a special case, Theorem \ref{thm:Gauss-ts} may also be regarded as an extension of the limit theory for these processes.
For example, \cite{Zhou2013} derives a functional central limit theorem under the so-called piecewise locally stationarity (PLS) assumption.
In \cite{mies2021}, we extended the result of Zhou for a more general regularity condition, similar to \eqref{eqn:ass-BV}.
While these functional central limit theorems allow for a convenient description of the convergence and the limit distribution, Theorem \ref{thm:Gauss-ts} quantifies the distributional convergence via explicit rates, and it is not restricted to the locally stationary model.

The proof of Theorem \ref{thm:Gauss-ts} proceeds by splitting the random vectors $X_t$ into blocks of consecutive terms which decouple by virtue of Assumption \eqref{eqn:ass-ergodic}, such that the Gaussian approximation results for independent random vectors, i.e.\ Theorem \ref{thm:eldan-sequential}, may be applied.
A second major ingredient in the proof of our main result is a bound on the moments of sums of dependent random vectors.
For the case of univariate stationary time series, \citet[Theorem 1]{Liu2013} present a Rosenthal-type inequality which depends explicitly on the functional dependence measure of the time series $X_t$.
Theorem \ref{thm:liu} below extends the result of Liu et al.\ to the high-dimensional, non-stationary setting, and might be of independent interest.

\begin{theorem}\label{thm:liu}
	Let $X_{t} = G_{t}(\beps_t) \in L_q(P)$, $t=1,\ldots, n$, with $\theta_{t,j,q,r}$ as in \eqref{eqn:physical-theta} for some $2\leq r \leq q<\infty$.
	There exists a universal constant $C=C(q,r)$, such that for all $n\in\N$,
	\begin{align}
		\begin{split}
			\left(\E\,  \max_{k\leq n} \left\|\sum_{t=1}^k ( X_{t} - \E X_{t}) \right\|_r^q\right)^{\frac{1}{q}}
			&\leq C n^{\frac{1}{2}-\frac{1}{q}} \sum_{j=1}^\infty \left( \sum_{t=1}^n \theta_{t,j,q,r}^q\right)^\frac{1}{q}  \\
			&\leq C n^\frac{1}{2}\sum_{j=1}^\infty \max_{t\leq n}\theta_{t,j,q,r} . 
		\end{split}\label{eqn:liu-1}
	\end{align}
	In the special case $r=2$, the inequality may be improved to
	\begin{align}
		&\quad \left(\E\,  \max_{k\leq n} \left\|\sum_{t=1}^k ( X_{t} - \E X_{t}) \right\|_2^q\right)^{\frac{1}{q}}\nonumber \\
		&\leq C \sum_{j=1}^\infty  (j\wedge n)^{\frac{1}{2}-\frac{1}{q}} \left( \sum_{t=1}^n \theta_{t,j,q,2}^q \right)^\frac{1}{q} + C \sum_{j=1}^n \left( \sum_{t=1}^n \theta_{t,j,2,2}^2 \right)^\frac{1}{2}. \label{eqn:liu-2}
	\end{align}
\end{theorem}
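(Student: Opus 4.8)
\textbf{Proof proposal for Theorem \ref{thm:liu}.}
The plan is to adapt the martingale--projection method of \citet{Liu2013} to the vector-valued, nonstationary setting, keeping every constant independent of $d$ and every bound phrased through sums of the individual dependence measures $\theta_{t,j,q,r}$.

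Let $\mathcal{F}_i=\sigma(\epsilon_k:k\le i)$ and $\mathcal{P}_i\,\cdot=\E[\,\cdot\mid\mathcal{F}_i\,]-\E[\,\cdot\mid\mathcal{F}_{i-1}\,]$, so that $X_t-\E X_t=\sum_{j\ge 0}\mathcal{P}_{t-j}X_t$, the series converging in $L^q(P;\ell_r)$. The first step is the pointwise estimate $\bigl(\E\|\mathcal{P}_{t-j}X_t\|_r^q\bigr)^{1/q}\le\theta_{t,j,q,r}$, which follows from the identity $\mathcal{P}_{t-j}X_t=\E\bigl[\,G_t(\beps_t)-G_t(\tilde{\beps}_{t,t-j})\mid\mathcal{F}_{t-j}\,\bigr]$ (replacing $\epsilon_{t-j}$ by an independent copy leaves the conditional law given $\mathcal{F}_{t-j-1}$ unchanged), the contractivity of conditional expectation on $L^q(P;\ell_r)$, and \eqref{eqn:physical-theta}. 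For fixed $j$, the partial sums $M_{k,j}=\sum_{t=1}^k\mathcal{P}_{t-j}X_t$ form a martingale in $k$ with respect to $(\mathcal{F}_{k-j})_k$, so $\|M_{k,j}\|_r$ is a nonnegative submartingale; combining Minkowski's inequality in $L^q$ over $j$ with Doob's maximal inequality, the claim reduces to bounding the single-lag norms $\bigl(\E\|M_{n,j}\|_r^q\bigr)^{1/q}$ and summing over $j$ (the term $j=0$ being absorbed into the notation of \eqref{eqn:physical-theta}).

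For general $2\le r\le q$, $\ell_r$ is $2$-smooth with a constant depending only on $r$, so a Burkholder--Rosenthal inequality for $\ell_r$-valued martingale differences gives
\begin{align*}
	\bigl(\E\|M_{n,j}\|_r^q\bigr)^{1/q}\le C(q,r)\Bigl[\bigl(\textstyle\sum_{t=1}^n\theta_{t,j,q,r}^2\bigr)^{1/2}+\bigl(\textstyle\sum_{t=1}^n\theta_{t,j,q,r}^q\bigr)^{1/q}\Bigr]\le 2C(q,r)\,n^{\frac12-\frac1q}\bigl(\textstyle\sum_{t=1}^n\theta_{t,j,q,r}^q\bigr)^{1/q},
\end{align*}
using $\|v\|_2\le n^{\frac12-\frac1q}\|v\|_q$ on $\R^n$; summing over $j$ yields the first line of \eqref{eqn:liu-1}, and the second follows from $(\sum_t\theta_{t,j,q,r}^q)^{1/q}\le n^{1/q}\max_t\theta_{t,j,q,r}$. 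The refinement \eqref{eqn:liu-2} for $r=2$ exploits the Hilbert structure of $\ell_2$ together with a second blocking at the lag scale: for $j\le n$ I would split the differences of $M_{\cdot,j}$ into $N_j=\lceil n/j\rceil$ consecutive super-blocks $B_\ell=\{(\ell-1)j+1,\dots,\ell j\}$ and set $U_\ell=\sum_{t\in B_\ell}\mathcal{P}_{t-j}X_t$. Since each $\mathcal{P}_{t-j}X_t$ is $\mathcal{F}_{t-j}$-measurable and centered given $\mathcal{F}_{t-j-1}$, both $(U_\ell)_\ell$ (with $\mathcal{G}_\ell=\mathcal{F}_{(\ell-1)j}$) and, within a block, $(\mathcal{P}_{t-j}X_t)_{t\in B_\ell}$ are martingale difference sequences of lengths $N_j$ and $\le j$, with vanishing within-block cross-variances, so the predictable quadratic variation of $(U_\ell)_\ell$ is $\sum_\ell\E[\|U_\ell\|_2^2\mid\mathcal{G}_{\ell-1}]$ with $L^1$-norm $\sum_t\E\|\mathcal{P}_{t-j}X_t\|_2^2\le\sum_t\theta_{t,j,2,2}^2$. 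Applying Doob and a Hilbert-space Rosenthal inequality on the two levels, the ``large-jump'' part contributes $\lesssim_q j^{\frac12-\frac1q}(\sum_t\theta_{t,j,q,2}^q)^{1/q}\le(j\wedge n)^{\frac12-\frac1q}(\sum_t\theta_{t,j,q,2}^q)^{1/q}$, the predictable-variation part contributes $\lesssim(\sum_t\theta_{t,j,2,2}^2)^{1/2}$, the range $j>n$ is already covered by the general bound, and summing over $j$ gives \eqref{eqn:liu-2}.

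The hard part will be the control of the predictable quadratic variation $\sum_\ell\E[\|U_\ell\|_2^2\mid\mathcal{G}_{\ell-1}]$: one needs its square root to be of the order of its $L^1$-norm, not merely its $L^{q/2}$-norm, since the naive triangle inequality in $L^{q/2}$ only returns the weaker $n^{\frac12-\frac1q}(\sum_t\theta_{t,j,q,2}^q)^{1/q}$ and wipes out the intended gain. Resolving this requires either iterating the blocking argument on the variation process itself, or a sufficiently sharp martingale Rosenthal inequality that separates the expectation-sized variance term from the large-jump term; doing so uniformly in $j$, with a constant depending on $q$ alone, and while keeping all estimates expressed through $\sum_{t=1}^n\theta_{t,j,\cdot,\cdot}^{\cdot}$ rather than $n\,\max_t\theta_{t,j,\cdot,\cdot}$ (as required for the nonstationary formulation), is where the genuine effort of the proof is concentrated.
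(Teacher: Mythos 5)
Your proof of \eqref{eqn:liu-1} is essentially the paper's argument: a projection decomposition of $X_t-\E X_t$ into lag-$j$ martingale difference arrays, the bound $(\E\|\mathcal{P}_{t-j}X_t\|_r^q)^{1/q}\le\theta_{t,j,q,r}$ via Jensen and the swapped-innovation identity, a Rosenthal inequality in the $2$-smooth space $\ell_r$ (the paper invokes Pinelis), Doob, and Minkowski over $j$. The only cosmetic difference is that the paper conditions on the finite window $(\epsilon_t,\dots,\epsilon_{t-j})$ and gets a \emph{backwards} martingale in $t$, whereas you condition on the infinite past $\mathcal{F}_{t-j}$ and get a forward one; for \eqref{eqn:liu-1} this makes no difference.

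For \eqref{eqn:liu-2} there is a genuine gap, and you have correctly located it yourself: you need the square root of the quadratic variation of the block sums $(U_\ell)_\ell$ at the scale of its $L^1$-norm, not its $L^{q/2}$-norm, and your proposal ends by listing two possible strategies without carrying either out. The paper's resolution is more elementary than either of your suggestions, and it is blocked by your choice of projection. Because the paper works with $X_{t,j}=\E(X_t\mid\epsilon_t,\dots,\epsilon_{t-j})$, each difference $X_{t,j}-X_{t,j-1}$ is a function of the \emph{finite} window $(\epsilon_{t-j},\dots,\epsilon_t)$ only; consequently the length-$j$ blocks $Y_{i,j}$ with $i$ odd (and, separately, $i$ even) are genuinely \emph{independent}, not merely martingale differences. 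One then applies the Rosenthal inequality for independent centered summands (second display of Theorem \ref{thm:rosenthal}), whose variance term is the deterministic quantity $(\sum_i\E\|Y_{i,j}\|_2^2)^{1/2}\le C(\sum_t\theta_{t,j,2,2}^2)^{1/2}$ — no predictable quadratic variation ever appears, and the ``hard part'' you flag simply does not arise. Your operators $\mathcal{P}_{t-j}X_t=\E[X_t\mid\mathcal{F}_{t-j}]-\E[X_t\mid\mathcal{F}_{t-j-1}]$ are $\mathcal{F}_{t-j}$-measurable and hence depend on the entire past, so your super-blocks $U_\ell$ are not independent and this route is closed to you as written. The fix is to redo the second part with the finite-window conditional expectations (the within-block martingale Rosenthal giving $j^{\frac12-\frac1q}(\sum_{t\in B_i}\theta_{t,j,q,2}^q)^{1/q}$ survives unchanged), and to treat the tail $j>n$ exactly as you propose, via the general bound applied to $S_k-S_{k,n}$.
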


While the choice of $r$ is irrelevant in finite dimensions, it might have a great impact if $d$ is large.
In particular, the bound in Theorem \ref{thm:liu} is dimension-agnostic, while the effect of the dimension is implicitly contained in the $\theta_{t,j,q,r}$.

\section{Feasible Gaussian approximation}\label{sec:bootstrap}

To employ Theorem \ref{thm:Gauss-ts} for statistical inference, we need to find a feasible approximation of the limiting Gaussian process. 
Since the covariance structure of the approximating Gaussian process is given explicitly in our result \eqref{eqn:Gauss-ts-2} by the local long run covariance matrices $\Sigma_t$, it suffices to find a suitable estimator of this covariance.
Indeed, Proposition \ref{prop:var-consistency} below reveals that we do not need to estimate each $\Sigma_t$ individually, but we only need an estimator $\hat{Q}(k)$ of the cumulative covariance process $Q(k) = \sum_{t=1}^k \Sigma_t$ such that $\max_k \|\hat{Q}(k)-Q(k)\|_{\tr}$ is small.
Denoting the outer product as $v^{\otimes 2} = v v^T$ for $v\in\R^d$, we suggest the estimator 
\begin{align*}
	\hat{Q}(k) &= \sum_{t=b}^k \frac{1}{b} \left( \sum_{s=t-b+1}^t X_s \right)^{\otimes 2}.
\end{align*}
The same estimator has been suggested in the univariate, nonstationary setting by \cite{Zhou2013}.
In the univariate, stationary setting, the estimator $\hat{Q}(n)$ has been previously studied by \cite{Peligrad1995}, and with non-overlapping blocks by \cite{Carlstein1986}.

The estimation error of $\hat{Q}(k)$ may be bounded as follows.

\begin{theorem}\label{thm:cov-est}
	Let $X_t=G_t(\beps_t)$ satisfy \eqref{eqn:ass-ergodic} with $q\geq 4$ and $\beta>2$, and \eqref{eqn:ass-BV}.
	Then
	\begin{align*}
		\E \max_{k=1,\ldots,n}\left\|  \hat{Q}(k) - \sum_{t=1}^k \Sigma_t\right\|_\tr
		&\leq C  \Theta^2 \left( \Gamma \sqrt{b} + \sqrt{n d b} + n b^{-1} + n b^{2-\beta} \right)
	\end{align*}
	for a universal factor $C$ depending on $\beta$ and $q$ only.
\end{theorem}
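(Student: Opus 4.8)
The plan is to decompose the estimation error $\hat Q(k) - \sum_{t=1}^k \Sigma_t$ into a bias part and a fluctuation part, and to control each in the trace norm uniformly in $k$. First I would write $\hat Q(k) = \sum_{t=b}^k \frac1b W_t^{\otimes 2}$ with $W_t = \sum_{s=t-b+1}^t X_s$, and centre: $\hat Q(k) = \sum_{t=b}^k \frac1b \bigl(W_t^{\otimes 2} - \E W_t^{\otimes 2}\bigr) + \sum_{t=b}^k \frac1b \E W_t^{\otimes 2}$. The deterministic second piece is a bias term to be compared with $\sum_{t=1}^k \Sigma_t$: since $\frac1b \E W_t^{\otimes 2} = \frac1b \sum_{s,s'=t-b+1}^t \Cov(X_s,X_{s'})$ and under \eqref{eqn:ass-ergodic} with $\beta>2$ the cross-covariances $\Cov(G_s(\beps_0),G_{s'}(\beps_0))$ decay like $\Theta^2 |s-s'|^{-\beta}$ in trace norm (this follows from the physical dependence bound, cf.\ the argument behind Theorem \ref{thm:liu}), a block-average approximates the local long-run covariance up to $O(\Theta^2 b^{2-\beta})$ per index from the truncation of the covariance series, plus an $O(\Theta^2 b^{-1})$ edge effect from the partial blocks at each end, plus the nonstationarity error $O(\Theta^2 \Gamma \, b^{-1} \cdot b)$—no, more carefully: replacing $\Sigma_s$ by $\Sigma_t$ within a block of width $b$ costs, summed over $t \le n$, at most $C\Theta^2 \Gamma \sqrt b$ by \eqref{eqn:ass-BV} together with a Cauchy–Schwarz argument on $\sum_t \|G_t(\beps_0)-G_{t-\ell}(\beps_0)\|_{L_2}$. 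Collecting, the bias contributes the $\Gamma\sqrt b + n b^{-1} + n b^{2-\beta}$ terms.

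For the fluctuation term $F(k) := \sum_{t=b}^k \frac1b \bigl(W_t^{\otimes 2} - \E W_t^{\otimes 2}\bigr)$ I would bound $\E \max_{k\le n}\|F(k)\|_\tr$. The key is to pass from the trace norm of a matrix martingale-like sum to a scalar sum: for any $k$, $\|F(k)\|_\tr = \sup_{\|A\|_{op}\le 1}\langle A, F(k)\rangle$, and one can reduce the supremum to a finite net, but the cleaner route is to note $\|F(k)\|_\tr \le \sum_{t=b}^k \frac1b \|W_t^{\otimes 2}-\E W_t^{\otimes 2}\|_\tr$ is too lossy; instead use that $\|v^{\otimes 2}\|_\tr = \|v\|^2$, so $\hat Q(k)$ is built from rank-one pieces, and apply a maximal inequality after grouping the index $t$ into $\sim n/b$ classes of $b$-separated indices, within each of which the $W_t$ depend on disjoint (hence, via \eqref{eqn:ass-ergodic}, weakly dependent) blocks of innovations. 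On each such subsequence the high-dimensional Rosenthal inequality Theorem \ref{thm:liu}, applied to the centred $L_{q/2}$-valued summands $\frac1b W_t^{\otimes 2}$ with their own physical dependence measure (which one bounds by $\theta$-products $\theta_{\cdot,j,q,2}\cdot\Theta$ times $b$, using $\|u^{\otimes2}-v^{\otimes2}\|_\tr \le \|u-v\|(\|u\|+\|v\|)$), yields a bound of order $\Theta^2 \sqrt{n d\, b}\,/\,b \cdot \sqrt b \asymp \Theta^2\sqrt{ndb}$ after accounting for the dimension entering through $\E\|W_t\|^2 \le \Theta^2 d b$ and the $n/b$ blocks each of length $b$. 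Doob's maximal inequality over the partial sums (legitimate within each decoupled subsequence, and losing only a constant) handles the $\max_{k\le n}$.

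The main obstacle I anticipate is the fluctuation term: one must track the dimension dependence honestly through the fourth-moment quantities $\E\|W_t\|^q$, and the physical dependence measure of the matrix-valued process $W_t^{\otimes2}$ is not simply that of $X_t$—it involves products of dependence coefficients, and the truncation $\beta>2$ is needed precisely so that $\sum_j j^{1-\beta}<\infty$ after one of these coefficients is summed while the other is bounded by $\Theta$. Getting the combination $\sqrt{ndb}$ rather than a larger power of $b$ or $d$ requires exploiting that $W_t^{\otimes2}$ is rank one (so its trace norm is $\|W_t\|^2$, not $\sqrt d\,\|W_t\|^2$), which is the reason Theorem \ref{thm:liu} is stated in the $r=2$ form \eqref{eqn:liu-2} with a dimension-agnostic bound. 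I would also need to verify the book-keeping that the number of decoupled subsequences $n/b$ times the per-subsequence bound reproduces exactly the claimed $\sqrt{ndb}$ and does not introduce an extra $\sqrt{n/b}$; this is where the $1/b$ normalization of $\hat Q$ and the $\sqrt{b}$-scale of the partial sums conspire, and it is the step most prone to error in the constants.
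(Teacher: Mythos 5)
Your decomposition into a nonstationarity bias (handled via \eqref{eqn:ass-BV} and the rank-one perturbation inequality $\|vv^T-ww^T\|_\tr\leq\|v-w\|(\|v\|+\|w\|)$), a covariance-truncation bias of order $\Theta^2(b^{2-\beta}+b^{-1})$ per index, and a stochastic fluctuation term is exactly the paper's architecture, and the three bias terms come out correctly. The gap is in the fluctuation term, specifically in where the factor $\sqrt{d}$ comes from. You attribute it to $\E\|W_t\|^2\leq\Theta^2 d b$ and claim that the rank-one structure of $W_t^{\otimes2}$ lets you avoid a $\sqrt d$ trace/Frobenius conversion. Both points are off: under \eqref{eqn:ass-ergodic} the dimension is already absorbed into $\Theta$, so Theorem \ref{thm:liu} gives $\E\|W_t\|^2\leq C\Theta^2 b$ with no extra $d$; and while each summand $W_t^{\otimes2}$ is rank one, the partial sums $F(k)$ are not, so you cannot escape the norm conversion at the level of the sum. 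The correct mechanism (and the paper's) is to view $\eta_t-\E\eta_t=H_t(\beps_t)$ as a single $\R^{d\times d}$-valued process, compute its physical dependence measure in \emph{Frobenius} norm — which is the Euclidean norm on $\R^{d^2}$, so \eqref{eqn:liu-2} applies verbatim and is dimension-agnostic — obtaining $\theta^H_{t,j,2,2}\lesssim\Theta^2 b^{-1/2}\min(1,(j/b)^{1-\beta})$, hence $\E\max_k\|F(k)\|_F\leq C\Theta^2\sqrt{nb}$, and only then pay $\|A\|_\tr\leq\sqrt d\,\|A\|_F$ to land on $\Theta^2\sqrt{ndb}$. This also makes your decoupling into $b$-separated subsequences unnecessary (and note that grouping would produce $b$ classes of $\sim n/b$ indices, not $n/b$ classes of $b$ indices, and the terms within a class are still dependent through the common innovation past, so you would need Theorem \ref{thm:liu} there anyway). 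Your displayed rate arithmetic $\Theta^2\sqrt{ndb}/b\cdot\sqrt b$ in fact equals $\Theta^2\sqrt{nd}$, not $\Theta^2\sqrt{ndb}$, which is a symptom of the same accounting problem you flagged yourself.

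One smaller organizational point: the kernel-freezing step should be performed on the random vectors, replacing $\sum_s G_s(\beps_s)$ by $\sum_s G_t(\beps_s)$ inside each window at cost $C\Theta^2\Gamma\sqrt b$, rather than only on the covariances $\Sigma_s\mapsto\Sigma_t$; otherwise the fluctuation term retains a time-varying kernel inside the window and its physical dependence measure is not cleanly of the form \eqref{eqn:physical-theta}. You have the right inequality for this swap; it just needs to be applied before, not after, centering.
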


The first error term is a bias term due to the nonstationarity of the process. 
The second term corresponds to the statistical error of estimation. 
The third and fourth term are bias terms because only finitely many lags are considered.
We note that terms two and three are also present in the result of \cite[Theorem 4]{Zhou2013}, while the first term is negligible, and the fourth term appears because, unlike Zhou, we do not assume a geometric decay of the dependence measure.

In order to perform statistical inference based on the estimator $\hat{Q}(k)$, the following proposition is of central importance.

\begin{proposition}\label{prop:var-consistency}
	Let $\Sigma_t,\Sigma_t'\in\R^{d\times d}$ be symmetric, positive definite matrices, for $t=1,\ldots,n$, and consider independent random vectors $Y_t\sim\mathcal{N}(0,\Sigma_t)$.
	On a potentially larger probability space, there exist independent random vectors $Y_t'\sim\mathcal{N}(0,\Sigma_t')$, such that
	\begin{align*}
		\E \max_{k=1,\ldots,n}\left\| \sum_{t=1}^k Y_t - \sum_{t=1}^k Y_t' \right\|^2 
			&\leq C \log(n) \left[\sqrt{n\delta\rho} + \rho \right], \\
		\text{where}\quad\delta = \max_{k=1,\ldots,n} \left\| \sum_{t=1}^k \Sigma_t  -  \sum_{t=1}^k\Sigma_t' \right\|_{\tr},
			&\qquad \rho = \max_{t=1,\ldots, n} \|\Sigma_t\|_{\tr}.
	\end{align*}
\end{proposition}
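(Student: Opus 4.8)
The plan is to reduce the maximal second moment to a pointwise one via a Gaussian maximal inequality, and then to build the coupling block-wise, trading off an optimal coupling of block sums against the variance that accumulates inside a block.

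First I would observe that any coupling we construct will be jointly Gaussian, so that $D_k:=\sum_{t\le k}Y_t-\sum_{t\le k}Y_t'$ is a centered Gaussian process in $\R^d$. Writing $\|D_k\|=\sup_{\|u\|\le 1}u^T D_k$, the variable $\max_{k\le n}\|D_k\|$ is the supremum of a centered Gaussian process and, as a function of the underlying standard Gaussian vector, is $L$-Lipschitz with $L^2=\max_k\lambda_{\max}(\Cov D_k)\le\max_k\tr(\Cov D_k)=\max_k\E\|D_k\|^2=:v$. Gaussian concentration then bounds its variance by $v$, and bounding its mean by splitting off $\E\|D_k\|$ and using that $\|D_k\|-\E\|D_k\|$ is $\sqrt v$-sub-Gaussian together with a union bound over $k\le n$ gives $\E\max_k\|D_k\|\le(1+\sqrt{2\log n})\sqrt v$. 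Hence $\E\max_k\|D_k\|^2\le C\log(n)\max_k\E\|D_k\|^2$, and it remains to exhibit a coupling with $\max_k\E\|D_k\|^2\le C(\sqrt{n\delta\rho}+\rho)$.

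For the coupling, fix a block length $\ell\in\{1,\dots,n\}$, partition $\{1,\dots,n\}$ into $m=\lceil n/\ell\rceil$ consecutive blocks $I_1,\dots,I_m$ of length $\le\ell$, and write $Q(k)=\sum_{t\le k}\Sigma_t$, $Q'(k)=\sum_{t\le k}\Sigma_t'$, $\Sigma^{(j)}=\sum_{t\in I_j}\Sigma_t$, $\Sigma'^{(j)}=\sum_{t\in I_j}\Sigma_t'$. On a new space take $G_1,\dots,G_m\iid\mathcal N(0,I_d)$, set $S^j=(\Sigma^{(j)})^{1/2}G_j$, $S'^j=(\Sigma'^{(j)})^{1/2}G_j$, and then, independently across $j$, draw $(Y_t)_{t\in I_j}$ from the conditional law of independent $\mathcal N(0,\Sigma_t)$'s given their sum $=S^j$ and $(Y_t')_{t\in I_j}$ from the corresponding conditional law given $S'^j$. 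This makes the $Y_t$ independent with $Y_t\sim\mathcal N(0,\Sigma_t)$ (similarly the $Y_t'$), so after enlarging the original space we may identify them with the given $Y_t$, while $\sum_{t\in I_j}Y_t=S^j$ and $\sum_{t\in I_j}Y_t'=S'^j$. At a block endpoint $k=\max I_J$ we have $D_k=\sum_{j\le J}(S^j-S'^j)$, a sum of independent centered vectors, so $\E\|D_k\|^2=\sum_{j\le J}\tr\big[((\Sigma^{(j)})^{1/2}-(\Sigma'^{(j)})^{1/2})^2\big]\le\sum_{j\le J}\|\Sigma^{(j)}-\Sigma'^{(j)}\|_{\tr}$ by the Powers--Størmer inequality $\tr\big[(A^{1/2}-B^{1/2})^2\big]\le\|A-B\|_{\tr}$ for $A,B\succeq 0$; since $\Sigma^{(j)}-\Sigma'^{(j)}=(Q(\max I_j)-Q'(\max I_j))-(Q(\min I_j-1)-Q'(\min I_j-1))$ is a difference of two matrices of trace norm $\le\delta$, each summand is $\le 2\delta$ and $\E\|D_k\|^2\le 2m\delta$. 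For general $k\in I_j$, split $D_k=D_{\min I_j-1}+\sum_{t\in I_j,\,t\le k}(Y_t-Y_t')$: the first term contributes $\le 2m\delta$ in squared norm, and the partial-block term, by the triangle inequality and independence of the increments, contributes at most $4\tr\big(\sum_{t\in I_j,t\le k}\Sigma_t\big)+4\tr\big(\sum_{t\in I_j,t\le k}\Sigma_t'\big)\le 4\ell\rho+(4\ell\rho+8\delta)$, where the primed trace is controlled via $\tr Q'=\tr Q+\tr(Q'-Q)$ at the two relevant indices and $|\tr(Q'(s)-Q(s))|\le\delta$. Altogether $\max_k\E\|D_k\|^2\le C(m\delta+\ell\rho+\delta)$.

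Finally, choosing $\ell\asymp\sqrt{n\delta/\rho}$ truncated to $[1,n]$ balances $m\delta\asymp n\delta/\ell$ against $\ell\rho$ and yields $\max_k\E\|D_k\|^2\le C(\sqrt{n\delta\rho}+\rho+\delta)$; the stray $\delta$ is absorbed into $\sqrt{n\delta\rho}$ using $\delta\le\max_k(\tr Q(k)+\tr Q'(k))\le 2n\rho$, valid in the situation of interest where $\rho$ also dominates $\max_t\|\Sigma_t'\|_{\tr}$. Combined with the first reduction, this is the claim. The main obstacle is conceptual rather than computational: one cannot simply couple each marginal pair $\mathcal N(0,Q(k)),\mathcal N(0,Q'(k))$ optimally, because the map $Q(k)\mapsto Q(k)^{1/2}$ destroys the independent-increments structure that $(Y_t)$ and $(Y_t')$ must each retain; the block construction — coupling coarsely at the level of block sums, where Powers--Størmer plus telescoping give the clean factor $\delta$, while paying the in-block variance $\ell\rho$ — is exactly what reconciles these two requirements, and optimizing the block length is what produces the geometric-mean rate $\sqrt{n\delta\rho}$.
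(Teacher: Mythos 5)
Your proof is correct, and it follows the same overall architecture as the paper's: partition time into blocks of length $\ell\asymp\sqrt{n\delta/\rho}$, couple the block sums at a cost of $\|\Sigma^{(j)}-\Sigma'^{(j)}\|_{\tr}\le 2\delta$ per block by telescoping against $Q-Q'$, pay $O(\ell\rho)$ for the within-block fluctuations, and optimize $\ell$ to get the geometric mean $\sqrt{n\delta\rho}$. Two sub-steps are, however, carried out with genuinely different tools. For the block-sum coupling you feed a common standard Gaussian $G_j$ through the two matrix square roots and invoke the Powers--St\o rmer inequality $\tr[(A^{1/2}-B^{1/2})^2]\le\|A-B\|_{\tr}$, whereas the paper (Lemma \ref{lem:Gauss-cov}) constructs an additive perturbation $\zeta_l\sim\mathcal{N}(0,|\Delta_l|)$ with $\E\|\zeta_l\|^2=\|\Delta_l\|_{\tr}$; both devices give exactly the same per-block bound, yours via a standard named inequality, the paper's with the side benefit that the differences $\xi_l'-\xi_l$ are independent across blocks and plug directly into Doob. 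For the maximum over $k$, you reduce globally to $\max_k\E\|D_k\|^2$ via Borell--TIS concentration plus a union bound over the $n$ time points (this requires, and your construction delivers, joint Gaussianity of the whole coupled family), while the paper uses Doob's $L^2$ inequality at block endpoints and a sub-exponential maximal inequality for the within-block remainders; both routes produce the same $\log(n)$ factor. One shared caveat: both arguments need $\delta\lesssim n\rho$ for the optimized block length to be admissible and for the stray $\delta$ term to be absorbed. The paper simply asserts $\delta\le n\rho$, which as stated also requires control of $\max_t\|\Sigma_t'\|_{\tr}$ by $\rho$; you make this hypothesis explicit, so your treatment of that point is, if anything, the more careful one.
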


Let us briefly discuss the implications of Proposition \ref{prop:var-consistency} by describing two statistical applications.
The first example is a sequential test for the mean value.
For a multivariate time series $X_{t}=X_{t,n}$, we want to test whether $\E(X_t)=0$ for all $t=1,\ldots, n$, i.e.\ we consider the hypothesis
\begin{align*}
	H_0^* : \E(X_t) = 0 \text{ for all } t=1,\ldots,n.
\end{align*} 
A suitable test statistic is given by $T_n^*(X_1,\ldots, X_n) = \max_{k=1,\ldots,n} \| \frac{1}{\sqrt{n}} \sum_{t=1}^k X_t \|$, and we reject $H_0^*$ for large values of $T_n^*$.

The second example is a change-point test for high-dimensional time series. 
Here, we consider the null-hypothesis
\begin{align*}
	H_0^\diamond: \E(X_t) = \E(X_1) \text{ for all } t=1,\ldots, n.
\end{align*}
For this testing problem, one may employ the CUSUM statistic 
\begin{align*}
	T_n^\diamond(X_1,\ldots, X_n) = \max_{k=1,\ldots, n} \frac{1}{\sqrt{n}}\left\| \sum_{t=1}^k X_t - \frac{k}{n} \sum_{t=1}^n X_t  \right\|,
\end{align*} 
and we reject $H_0^\diamond$ for large values of $T_n^\diamond$.

Both statistics are compatible with the strong Gaussian approximation.
To be precise, for $T_n \in \{T_n^*, T_n^\diamond\}$, it holds that
\begin{align}
	\left| T_n(X_1,\ldots, X_n) - T_n(Y_1,\ldots, Y_n) \right| \leq \frac{1}{\sqrt{n}} \max_{k=1,\ldots,n}\left\| \sum_{t=1}^k X_t - \sum_{t=1}^k Y_t \right\|. \label{eqn:Tn-continuity}
\end{align}
Hence, we may use our approximation results to derive an asymptotically exact test for the hypotheses $H_0^*$ and $H_0^\diamond$.
To this end, we determine a critical value as follows.
For any increasing process $Q_t$ of symmetric, positive semidefnite matrices, let $\tilde{\Sigma}_t = Q(t) - Q(t-1)$.
Then $\tilde{\Sigma}_t$ are symmetric positive semidefinite matrices.
Let $Z_t \sim \mathcal{N}(0,\tilde{\Sigma}_t)$ be independent Gaussian random vectors.
For any significance level $\alpha\in(0,1)$, and for any realization, we may then find a suitable quantile $a_\alpha=a_\alpha(Q)$ such that
\begin{align*}
	a_\alpha(Q) = \inf\left\{ a\,:\, P\left(T_n(Z_1,\ldots, Z_n)>a \right) \leq \alpha \right\}.
\end{align*}
We reject the the null hypothesis if 
\begin{align}
	T_n(X_1,\ldots, X_n) > a_{\alpha-\nu}(\hat{Q}) + \tau. \label{eqn:sequential-test}
\end{align}
For any realization $\hat{Q}$, the quantile $a(\hat{Q})$ may be determined numerically via Monte Carlo simulations, which may be regarded as a type of bootstrap inference.
To account for the estimation error of $\hat{Q}$ and for the error of the Gaussian approximation, we add an offset $\tau=\tau_n$ such that $\tau_n\to 0$ as $n\to\infty$, and $\nu=\nu_n\to 0$.

\begin{proposition}\label{prop:seq-test}
	Let $X_t=X_{t,n} = G_{t,n}(\beps_t)$, $t=1,\ldots, n$, be an array of $d_n$-variate time series, such that each kernel $G_{t,n}$ satisfies \eqref{eqn:ass-ergodic} and \eqref{eqn:ass-BV} for some $q>4, \beta>2$, and with factors $\Theta_n$ and $\Gamma_n$. 
	
	If $\tau_n\to 0$ and $\nu_n\to 0$ are chosen such that 
	\begin{align}
		\tau_n &\gg \sqrt{\log(n)} \Theta_n \left\{ (\tfrac{d_n}{n})^{\xi(q,\beta)} + \nu_n^{-\frac{1}{2}}\left( \Gamma_n^{\frac{1}{4}} n^{-\frac{1}{4}} b_n^{\frac{1}{8}} + n^{-\frac{1}{8}} d_n^\frac{1}{8} b_n^{\frac{1}{8}} +  b_n^{-\frac{1}{4}} +  b_n^{\frac{2-\beta}{4}} +n^{-\frac{1}{2}} \right) \right\}, \label{eqn:seq-test-ass}
	\end{align}	
	then for any statistic $T_n$ satisfying \eqref{eqn:Tn-continuity},
	\begin{align*}
		\limsup_{n\to\infty} P\left( T_n(X_1,\ldots, X_n) > a_{\alpha-\nu_n}(\hat{Q}) + \tau_n \right) \leq \alpha
	\end{align*}
\end{proposition}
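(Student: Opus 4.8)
The plan is to control the rejection probability by moving from the data-based statistic $T_n(X_1,\ldots,X_n)$ to a Gaussian statistic with the \emph{true} cumulative covariance $Q(t)=\sum_{s\le t}\Sigma_s$, then to the Gaussian statistic with the \emph{estimated} covariance $\hat Q$, keeping track of every approximation error and absorbing it into the offsets $\tau_n$ and $\nu_n$. First I would invoke Theorem \ref{thm:Gauss-ts}, specifically the bound \eqref{eqn:Gauss-ts-2}, to construct on a suitable probability space random vectors $X_t'\deq X_t$ and independent Gaussians $Y_t^*\sim\mathcal N(0,\Sigma_t)$ with $\bigl(\E\max_{k\le n}\|\tfrac1{\sqrt n}\sum_{t\le k}(X_t'-Y_t^*)\|^2\bigr)^{1/2}\le C\Theta_n\Gamma_n^{\frac12\frac{\beta-2}{\beta-1}}\sqrt{\log n}\,(d_n/n)^{\xi(q,\beta)}$. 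By the Lipschitz property \eqref{eqn:Tn-continuity} and Markov's inequality, $|T_n(X')-T_n(Y^*)|$ is $\mathcal O_P$ of this quantity, so $P(T_n(X)>a_{\alpha-\nu_n}(\hat Q)+\tau_n) = P(T_n(X')>a_{\alpha-\nu_n}(\hat Q)+\tau_n) \le P(T_n(Y^*)>a_{\alpha-\nu_n}(\hat Q)+\tau_n - \varepsilon_n^{(1)}) + P(|T_n(X')-T_n(Y^*)|>\varepsilon_n^{(1)})$, where $\varepsilon_n^{(1)}$ is chosen slightly below $\tau_n$ (say a fixed fraction), making the second probability vanish since the first term in \eqref{eqn:seq-test-ass} dominates.

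Next I would relate $a_{\alpha-\nu_n}(\hat Q)$ to $a_{\alpha-\nu_n/2}(Q)$. This is where Proposition \ref{prop:var-consistency} enters. Conditionally on the data (hence on $\hat Q$, which is independent of the fresh Monte-Carlo Gaussians defining the quantile), Proposition \ref{prop:var-consistency} couples the Gaussian process with increments $\tilde\Sigma_t=\hat Q(t)-\hat Q(t-1)$ to one with increments $\Sigma_t$, with coupling error controlled by $\delta=\max_k\|\hat Q(k)-Q(k)\|_\tr$ and $\rho=\max_t\|\Sigma_t\|_\tr$. Here Theorem \ref{thm:cov-est} gives $\E\delta\le C\Theta_n^2(\Gamma_n\sqrt{b_n}+\sqrt{nd_nb_n}+nb_n^{-1}+nb_n^{2-\beta})$, and $\rho$ is $\mathcal O(\Theta_n^2)$ under \eqref{eqn:ass-ergodic} with $\beta>2$ (each $\Sigma_t$ has trace norm bounded by a summable series in the $\theta$'s). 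Feeding these into the bound of Proposition \ref{prop:var-consistency} yields a coupling error whose square root is $\sqrt{\log n}\,\Theta_n$ times $(\Gamma_n^{1/4}n^{-1/4}b_n^{1/8}+n^{-1/8}d_n^{1/8}b_n^{1/8}+b_n^{-1/4}+b_n^{(2-\beta)/4}+n^{-1/2})$ — precisely the bracketed terms in \eqref{eqn:seq-test-ass} (before division by $\sqrt{\nu_n}$). The standard ``quantile stability'' argument — if two random variables are close with high probability $1-\nu_n/2$, their $\alpha$-quantiles differ by at most the closeness level, at the cost of shifting the quantile level by $\nu_n/2$ — then gives $a_{\alpha-\nu_n}(\hat Q)\ge a_{\alpha-\nu_n/2}(Q)-\varepsilon_n^{(2)}$ on an event of probability $\ge 1-\nu_n/2$, where $\varepsilon_n^{(2)}$ is the coupling level; choosing $\varepsilon_n^{(2)}$ on the order of $\sqrt{\E(\text{coupling error})/\nu_n}$ via Markov makes the exceptional event probability $\le\nu_n/2$, and the factor $\nu_n^{-1/2}$ is exactly why it appears in \eqref{eqn:seq-test-ass}.

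Finally I would assemble the pieces: on the intersection of the good events, $T_n(Y^*)>a_{\alpha-\nu_n}(\hat Q)+\tau_n-\varepsilon_n^{(1)}$ implies $T_n(Y^*)>a_{\alpha-\nu_n/2}(Q)+\tau_n-\varepsilon_n^{(1)}-\varepsilon_n^{(2)}$, and since $\tau_n$ is chosen to dominate $\varepsilon_n^{(1)}+\varepsilon_n^{(2)}$ (that is the content of the $\gg$ in \eqref{eqn:seq-test-ass}), this forces $T_n(Y^*)>a_{\alpha-\nu_n/2}(Q)$, an event of probability at most $\alpha-\nu_n/2$ by the very definition of $a_{\alpha-\nu_n/2}(Q)$ as the $(\alpha-\nu_n/2)$-quantile of $T_n$ evaluated at independent $\mathcal N(0,\Sigma_t)$'s. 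Collecting the $\le\nu_n/2$ and $\le\nu_n/2$ exceptional-event contributions, $P(\text{reject})\le \alpha-\nu_n/2+\nu_n/2+o(1)=\alpha+o(1)$, and taking $\limsup$ finishes. I expect the main obstacle to be the quantile-stability step combined with the conditioning: one must be careful that $\hat Q$ is independent of the Monte-Carlo randomness used to define $a_\alpha(\hat Q)$, that the coupling of Proposition \ref{prop:var-consistency} can be applied conditionally on $\hat Q$ (requiring $\hat Q$ to be almost surely positive definite, or a small ridge regularization), and that the quantile shifts compose correctly without circular dependence between the levels $\nu_n$ and the error sizes. The bookkeeping of which error term feeds which offset — and verifying that the exponents in \eqref{eqn:seq-test-ass} match the outputs of Theorems \ref{thm:Gauss-ts}, \ref{thm:cov-est} and Proposition \ref{prop:var-consistency} after taking square roots and dividing by $\sqrt{\nu_n}$ — is routine but must be done carefully.
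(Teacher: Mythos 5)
Your proposal is correct and follows essentially the same route as the paper's proof: couple $X_t$ with $Y_t^*\sim\mathcal N(0,\Sigma_t)$ via \eqref{eqn:Gauss-ts-2} and absorb that error into $\tau_n$ by Markov, then compare $a_{\alpha-\nu_n}(\hat Q)$ with a quantile of $Q$ by combining Theorem \ref{thm:cov-est}, Proposition \ref{prop:cov} (for $\rho$) and the Gaussian coupling of Proposition \ref{prop:var-consistency}, with the $\nu_n^{-1/2}$ factor arising exactly as you describe. The only differences are cosmetic bookkeeping (you split the $\nu_n$ budget in half and compare with $a_{\alpha-\nu_n/2}(Q)$, whereas the paper spends all of $\nu_n$ at once and compares with $a_\alpha(Q)$), and your exponent accounting for \eqref{eqn:seq-test-ass} checks out.
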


Condition \eqref{eqn:seq-test-ass} looks rather complicated since it combines all constraints on the dimension $d_n$, the bounds $\Theta_n$ and $\Gamma_n$, the window size $b_n$, and the offsets $\nu_n, \tau_n$.
It is instructive to consider two special cases.
First, if $\Gamma_n, \Theta_n$, and $d_n$ are all constant, then a conservative choice satisfying \eqref{eqn:seq-test-ass} is $\nu_n = \tau_n = 1/ \log(n)$, and $b_n \asymp n^\zeta$ for some $\zeta \in (0, \frac{1}{2})$. 
Secondly, in the high-dimensional case $d_n\to\infty$ and $\Theta_n \asymp \sqrt{d_n}$, we also need to require that $d_n^{1+\frac{1}{2\xi(q,\beta)}}/n = \mathcal{O}(n^{-\delta})$ for some small $\delta>0$ so that the error of the Gaussian approximation is negligible. 
In the limiting case where $\beta\geq 3$ and $q\to\infty$, this corresponds to $d_n = \mathcal{O}(n^{\frac{1}{4}-\delta})$ for some $\delta>0$.
Note that this restriction on the dimension also implies that for all choices $b_n=n^\zeta, \zeta\in(0,\frac{1}{2})$, condition \eqref{eqn:seq-test-ass} is satisfied.

Proposition \ref{prop:seq-test} shows that the suggested test based on our strong approximation result asymptotically maintains a specified type-I error of at most $\alpha\in(0,1)$, i.e., the test is conservative. 
To obtain the exact size of the test, we would need to exploit some regularity of the mapping $\alpha\mapsto a_\alpha$. 
However, these quantiles also depend on the specific statistic $T_n$, thus requiring a case-by-case analysis. 
This question is out of scope of the present article.

\section{Proofs}\label{sec:proofs}

\subsection{Preliminaries}

\begin{lemma}\label{lem:gauss-norm}
	For any $q\geq 2$, there exists some $C_q$ such that for all $d\in\N$ and any centered $d$-variate Gaussian random vector $Y$, $(\E \|Y\|^q)^\frac{1}{q} \leq C_q (\E \|Y\|^2)^\frac{1}{2}$.
\end{lemma}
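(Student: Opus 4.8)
The plan is to reduce everything to a one-dimensional computation via the spectral decomposition of the covariance matrix, and then to read off a dimension-free constant from Jensen's inequality. Write $\Sigma=\Cov(Y)$ and let $\lambda_1,\ldots,\lambda_d\ge 0$ be its eigenvalues. Since $Y\deq\Sigma^{1/2}Z$ for a standard $d$-variate normal vector $Z=(Z_1,\ldots,Z_d)$, an orthogonal change of coordinates gives $\|Y\|^2\deq\sum_{i=1}^d\lambda_i Z_i^2$ with $Z_1,\ldots,Z_d$ i.i.d.\ $\mathcal N(0,1)$; in particular $\E\|Y\|^2=\sum_{i=1}^d\lambda_i=\tr(\Sigma)$, which is finite, and we may assume it is positive since $\Sigma=0$ makes the statement trivial.

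Next I would normalize: set $s=\sum_i\lambda_i=\E\|Y\|^2$ and $\mu_i=\lambda_i/s$, so $(\mu_1,\ldots,\mu_d)$ is a probability vector and $\sum_i\lambda_i Z_i^2=s\sum_i\mu_i Z_i^2$. Because $t\mapsto t^{q/2}$ is convex on $[0,\infty)$ for $q\ge 2$, Jensen's inequality with the weights $\mu_i$ gives, pointwise, $\bigl(\sum_i\mu_i Z_i^2\bigr)^{q/2}\le\sum_i\mu_i(Z_i^2)^{q/2}=\sum_i\mu_i|Z_i|^q$. Taking expectations and using $\E|Z_i|^q=m_q$, where $m_q:=\E|Z_1|^q$ depends on $q$ only, together with $\sum_i\mu_i=1$, yields $\E\|Y\|^q=s^{q/2}\,\E\bigl(\sum_i\mu_i Z_i^2\bigr)^{q/2}\le s^{q/2}m_q=(\E\|Y\|^2)^{q/2}m_q$. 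Taking $q$-th roots gives the claim with $C_q=m_q^{1/q}$.

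There is no real obstacle here; the one point to get right is \emph{why} the constant is dimension-free, namely that the weights $\mu_i$ sum to $1$ irrespective of $d$ and that $\tr(\Sigma)$ equals exactly the second moment $\E\|Y\|^2$. If one preferred to avoid diagonalization, an alternative is Gaussian concentration: $z\mapsto\|\Sigma^{1/2}z\|$ is Lipschitz with constant $\lambda_{\max}(\Sigma)^{1/2}\le\tr(\Sigma)^{1/2}$, so $\|Y\|-\E\|Y\|$ is sub-Gaussian with parameter at most $(\E\|Y\|^2)^{1/2}$, and then $\E\|Y\|^q\le 2^{q-1}\bigl((\E\|Y\|)^q+\E|\|Y\|-\E\|Y\||^q\bigr)$ with the first term bounded by $(\E\|Y\|^2)^{q/2}$ and the second by a dimension-free sub-Gaussian moment bound. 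The Jensen argument is shorter and entirely elementary, so that is the one I would write out.
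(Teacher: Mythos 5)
Your proof is correct and follows essentially the same route as the paper: both diagonalize $\Sigma$ to write $\|Y\|^2 \deq \sum_i \lambda_i Z_i^2$ and then apply a convexity argument in $L_{q/2}$, arriving at the same constant $C_q=(\E|Z_1|^q)^{1/q}$. The only cosmetic difference is that the paper invokes Minkowski's inequality for the $L_{q/2}$-norm of $\sum_i\lambda_i Z_i^2$ directly, whereas you normalize the eigenvalues to probability weights and use discrete Jensen; since all the $\|Z_i^2\|_{L_{q/2}}$ coincide, the two inequalities give the identical bound.
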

\begin{proof}[Proof of Lemma \ref{lem:gauss-norm}]
	Let $Y\sim\mathcal{N}(0,\Sigma)$. 
	Then $\|Y\|^2 = \sum_{j=1}^d \sigma_j^2 \delta_j^2$, where $\sigma_j^2$ are the eigenvalues of $\Sigma$, and $\delta_j$ are iid standard normal random variables.
	Hence, via the Minkowski inequality,
	\begin{align*}
		(\E\|Y\|^q )^\frac{1}{q} 
		= \left(\E \left(\sum_{j=1}^d \sigma_j^2 \delta_j^2\right)^\frac{q}{2} \right)^{\frac{2}{q}\frac{1}{2}} 
		\leq \left( \sum_{j=1}^d \sigma_j^2 (\E |\delta_j|^q)^\frac{2}{q} \right)^\frac{1}{2}
		\leq C_q \left( \sum_{j=1}^d \sigma_j^2 \right)^\frac{1}{2}.
	\end{align*}
\end{proof}

\begin{lemma}[2-Wasserstein bound]\label{lem:Gauss-cov}
	Let $\Sigma_1,\Sigma_2\in\R^{d\times d}$ be symmetric, positive semidefinite matrices, and let $v_i\in\R^d, \lambda_i\in\R$, $i=1,\ldots, d$, be the eigenvectors and eigenvalues of $\Delta = \Sigma_2-\Sigma_1$.
	Define the matrix $|\Delta| = \sum_{i=1}^d v_i v_i^T |\lambda_i|$.
	Consider a random vector $Y\sim\mathcal{N}(0,\Sigma_1)$ defined on a sufficiently rich probability space.
	Then there exists a random vector $\eta\sim\mathcal{N}(0,|\Delta|)$ such that $Y+\eta\sim \mathcal{N}(0,\Sigma_2)$.
\end{lemma}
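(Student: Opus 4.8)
The plan is to realize $\eta$ together with the given $Y$ as a jointly Gaussian pair, which turns the statement into a positive‑semidefiniteness question for the $2d\times 2d$ joint covariance. First I would record the spectral decomposition $\Delta = \Delta_+ - \Delta_-$, with $\Delta_+ = \sum_{i:\lambda_i>0}\lambda_i v_iv_i^T$ and $\Delta_- = \sum_{i:\lambda_i<0}|\lambda_i|v_iv_i^T$ both positive semidefinite and with mutually orthogonal ranges; then $|\Delta| = \Delta_+ + \Delta_-$ and $\Sigma_2 = \Sigma_1 + \Delta_+ - \Delta_-$. For a jointly Gaussian pair $(Y,\eta)$ with $\Cov(\eta)=|\Delta|$ and cross‑covariance $C=\Cov(Y,\eta)$ one has $\Cov(Y+\eta)=\Sigma_1+C+C^T+|\Delta|$, so $Y+\eta\sim\mathcal N(0,\Sigma_2)$ is equivalent to $C+C^T=\Sigma_2-\Sigma_1-|\Delta| = -2\Delta_-$. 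Moreover, on a sufficiently rich probability space the given $Y$ can be completed to such a pair (via the conditional law of $\eta$ given $Y$) as soon as the block matrix $\bigl(\begin{smallmatrix}\Sigma_1 & C\\ C^T & |\Delta|\end{smallmatrix}\bigr)$ is positive semidefinite, so it suffices to produce one admissible $C$.

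The natural candidate is the symmetric choice $C=-\Delta_-$. I would verify positive semidefiniteness via the identity
\[
	\begin{pmatrix}\Sigma_1 & -\Delta_- \\ -\Delta_- & |\Delta|\end{pmatrix}
	= \begin{pmatrix} I \\ -I \end{pmatrix}\Delta_-\begin{pmatrix} I & -I \end{pmatrix}
	+ \begin{pmatrix} \Sigma_1 - \Delta_- & 0 \\ 0 & \Delta_+\end{pmatrix},
\]
in which the first summand is manifestly positive semidefinite and the second is block diagonal with blocks $\Delta_+\succeq 0$ and $\Sigma_1-\Delta_-$; equivalently, since $\Delta_+$ and $\Delta_-$ have orthogonal ranges one has $\Delta_-|\Delta|^\dagger\Delta_- = \Delta_-$, so a Schur‑complement computation reduces positive semidefiniteness of the block matrix to the single matrix inequality $\Sigma_1 \succeq \Delta_-$. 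Granting this, $\eta$ is constructed from the conditional law $\eta\mid Y \sim \mathcal N\bigl(-\Delta_-\Sigma_1^\dagger Y,\; |\Delta| - \Delta_-\Sigma_1^\dagger\Delta_-\bigr)$, once the attendant range inclusion $\operatorname{range}(\Delta_-)\subseteq\operatorname{range}(\Sigma_1)$ is checked.

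The step I expect to be the main obstacle is exactly $\Sigma_1 \succeq \Delta_-$: evaluating on an eigenvector $v$ of $\Delta$ gives $v^T\Sigma_1 v \geq v^T\Delta_- v$ directly from $\Sigma_2 = \Sigma_1+\Delta\succeq 0$, but $\Sigma_1$ and $\Delta_-$ need not be simultaneously diagonalizable, so this does not obviously upgrade to the matrix inequality. The route I would take to circumvent it is a two‑stage coupling: first pass from $\mathcal N(0,\Sigma_1)$ to $\mathcal N(0,\Sigma_1+\Delta_+)=\mathcal N(0,\Sigma_2+\Delta_-)$ by adding an independent $\mathcal N(0,\Delta_+)$; then pass from $\mathcal N(0,\Sigma_2+\Delta_-)$ to $\mathcal N(0,\Sigma_2)$ by the construction above, where the analogue of $\Sigma_1\succeq\Delta_-$ is now $\Sigma_2+\Delta_-\succeq\Delta_-$, which holds automatically because $\Sigma_2\succeq 0$. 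The remaining care — and the delicate point of this approach — is to set up the two stages so that the two Gaussian increments are uncorrelated, ensuring that their sum is indeed $\mathcal N(0,\Delta_+ + \Delta_-) = \mathcal N(0,|\Delta|)$.
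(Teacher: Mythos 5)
Your identification of the crux is exactly right, and it is in fact sharper than the paper's own argument: the paper asserts $\Delta_- \prec \Sigma_1$ and justifies it by checking $v_i^T \Delta_- v_i \leq v_i^T \Sigma_1 v_i$ only on the eigenvectors $v_i$ of $\Delta$. As you observe, since $\Sigma_1$ and $\Delta_-$ need not be simultaneously diagonalizable, this does not upgrade to a matrix inequality, and the inequality is false in general: for $u=(1,\sqrt2)^T$, $w=(\sqrt2,1)^T$, $\Sigma_1=uu^T$, $\Sigma_2=ww^T$ one has $\Delta=\mathrm{diag}(1,-1)$, $\Delta_-=e_2e_2^T$, and $\det(\Sigma_1-\Delta_-)=-1<0$.

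The genuine gap in your proposal is precisely the step you flag as ``delicate'': the two increments cannot in general be made uncorrelated, and no repair is possible because the lemma as stated is false. In the example above, $|\Delta|=I_2$ and $Y=Zu$ with $Z\sim\mathcal N(0,1)$; any $T\sim\mathcal N(0,\Sigma_2)$ is supported on $\mathrm{span}(w)$, so $T=Vw$ with $V\sim\mathcal N(0,1)$, and $\eta=T-Y$ necessarily satisfies
\[
\Cov(\eta)=ww^T+uu^T-\rho\,\bigl(wu^T+uw^T\bigr),\qquad \rho=\E[VZ]\in[-1,1].
\]
Matching this to $I_2$ forces $3-2\sqrt2\,\rho=1$ on the diagonal and $2\sqrt2-3\rho=0$ off the diagonal, i.e.\ $\rho=1/\sqrt2$ and $\rho=2\sqrt2/3$ simultaneously, which is impossible; hence no $\eta\sim\mathcal N(0,|\Delta|)$ with $Y+\eta\sim\mathcal N(0,\Sigma_2)$ exists, jointly Gaussian or not. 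In your two-stage scheme this manifests as $\Cov(\eta_1,\eta_2)=-\Delta_+(\Sigma_2+\Delta_-)^\dagger\Delta_-$ under the natural conditional construction, which is nonzero in this example, and the computation above shows that no alternative joint law removes the obstruction. What does hold, and what the downstream applications of the lemma actually use, is the weaker statement that some coupling achieves a Gaussian difference with $\E\|Y'-Y\|^2\leq\|\Sigma_2-\Sigma_1\|_\tr$: take $Y=\Sigma_1^{1/2}\xi$ and $Y'=\Sigma_2^{1/2}\xi$ for a common standard normal $\xi$ and invoke the Powers--St{\o}rmer inequality $\|\Sigma_1^{1/2}-\Sigma_2^{1/2}\|_F^2\leq\|\Sigma_1-\Sigma_2\|_\tr$; the precise distributional claim $Y'-Y\sim\mathcal N(0,|\Delta|)$ has to be abandoned.
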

\begin{proof}[Proof of Lemma \ref{lem:Gauss-cov}]
	Introduce the symmetric positive semidefinite matrices
	\begin{align*}
		\Delta_+ = \sum_{i=1}^d v_i v_i^T (\lambda_i \vee 0), \qquad \Delta_- = \sum_{i=1}^d v_i v_i^T (-\lambda_i \vee 0),
	\end{align*}
	such that $|\Delta| = \Delta_+ + \Delta_-$, and $\Delta=\Delta_+-\Delta_-$.
	Moreover, it holds that $\Delta_- \prec \Sigma_1$, where $\prec$ denotes the partial ordering of positive semidefinite matrices, because
	\begin{align*}
		v_i^T \Delta_- v_i = -\lambda_i = v_i^T(\Sigma_1-\Sigma_2) v_i &\leq v_i^T \Sigma_1v_i,\qquad \lambda_i\leq 0,\\
		v_i^T \Delta_- v_i = 0 &\leq v_i^T \Sigma_1 v_i,\qquad \lambda_i >0.
	\end{align*}
	Define the matrix $\Sigma\in\R^{3d\times 3d}$ by
	\begin{align*}
		\Sigma = \begin{pmatrix}
			\Sigma_1 & -\Delta_- & 0 \\
			-\Delta_-^T & \Delta_- & 0 \\
			0 & 0 & \Delta_+
		\end{pmatrix}.
	\end{align*}
	Because $\Delta_- \prec \Sigma_1$, the symmetric matrix $\Sigma$ is positive semidefinite.
	Hence, there exist $d$-dimensional random vectors $\eta_1,\eta_2$, such that $(Y,\eta_1,\eta_2)\sim\mathcal{N}(0,\Sigma)$.
	By our choice of $\Sigma$, we have
	\begin{align*}
		\Cov(Y+\eta_1+\eta_2) &= \Sigma_1 - \Delta_- + \Delta_+ = \Sigma_2, \\
		\Cov(\eta_1+\eta_2) &= \Delta_+ + \Delta_- = |\Delta|.
	\end{align*}
	Hence, we established the claim of the Lemma with $\eta = \eta_1 + \eta_2$.
\end{proof}

\begin{lemma}\label{lem:trace-bound}
	For two vectors $x,y\in\R^d$, it holds $\|x y^T\|_\tr =\|x y^T\|_F = \|x\|_2 \|y\|_2$.
\end{lemma}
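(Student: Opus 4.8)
The plan is to exploit that $A = xy^{T}$ has rank at most one, so that all three quantities are governed by its single (possibly) nonzero singular value. First I would handle the Frobenius norm by a direct entrywise computation: $\|xy^{T}\|_{F}^{2} = \sum_{i,j}(x_{i}y_{j})^{2} = \bigl(\sum_{i}x_{i}^{2}\bigr)\bigl(\sum_{j}y_{j}^{2}\bigr) = \|x\|_{2}^{2}\|y\|_{2}^{2}$, which gives $\|xy^{T}\|_{F} = \|x\|_{2}\|y\|_{2}$.

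For the trace norm I would compute $A^{T}A = y\,x^{T}x\,y^{T} = \|x\|_{2}^{2}\,yy^{T}$, which is a symmetric positive semidefinite matrix of rank at most one whose unique nonzero eigenvalue is $\|x\|_{2}^{2}\|y\|_{2}^{2}$ (with eigenvector $y$). Hence $(A^{T}A)^{1/2} = \|x\|_{2}\|y\|_{2}\cdot \tfrac{yy^{T}}{\|y\|_{2}^{2}}$ on the range of $y$ and zero on its orthogonal complement, so $\|xy^{T}\|_{\tr} = \tr\bigl((A^{T}A)^{1/2}\bigr) = \|x\|_{2}\|y\|_{2}$. (Equivalently, one can simply note that a rank-one matrix has a single nonzero singular value $\sigma_{1} = \|x\|_{2}\|y\|_{2}$, and that $\|\cdot\|_{\tr}$ is the sum of singular values while $\|\cdot\|_{F}$ is the $\ell^{2}$-norm of the singular values, both of which reduce to $\sigma_{1}$ in the rank-one case.) The degenerate cases $x = 0$ or $y = 0$ are trivial since then $xy^{T} = 0$ and all quantities vanish. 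There is no real obstacle here; the only thing to be mindful of is keeping the rank-one structure explicit so that the identification of the eigenvalue of $A^{T}A$ is transparent.
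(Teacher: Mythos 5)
Your proof is correct and follows essentially the same route as the paper: identify the single nonzero singular value of the rank-one matrix $xy^T$ by computing $A^TA$ (the paper uses $AA^T = \|y\|_2^2\,xx^T$, which is equivalent) and read off both norms from it. The direct entrywise verification of the Frobenius norm is a small additional explicit check that the paper leaves implicit.
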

\begin{proof}[Proof of Lemma \ref{lem:trace-bound}]
	Observe that $A= (xy^T)(xy^T)^T = xy^T y x^T = \|y\|_2^2 x x^T$, such that the eigenvalues of $A$ are $(\|y\|_2^2 \|x\|_2^2, 0,\ldots, 0)$. 
	Hence, $\|xy^T\|_\tr = \tr(\sqrt{A}) = \|y\|_2 \|x\|_2 = \|xy^T\|_F$. 
\end{proof}

\begin{proposition}\label{prop:cov}
	Let $G_{t}:\R^\infty\to\R^{d}$ satisfy \eqref{eqn:ass-ergodic} for $q\geq 2$.
	Denote 
	\begin{align*}
		\gamma_{t}(h) = \Cov \left[ G_{t}(\beps_0), G_{t}(\beps_h) \right] \in\R^{d\times d}.
	\end{align*}
	Then $\|\gamma_{t}(h)\|_\tr \leq \Theta^2 \sum_{j=h}^\infty j^{-\beta}$, where $\|\cdot \|_\tr$ denotes the trace norm.
	Hence, if $\beta>2$, then the long-run covariance $\gamma_t = \sum_{h=-\infty}^\infty \gamma_t(h)$ is well-defined.
\end{proposition}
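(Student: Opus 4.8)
The plan is to exploit the coupling sequences introduced at the beginning of Section \ref{sec:dependent}. The key observation is that for $h\geq 1$ the random vector $G_t(\bar{\beps}_{h,0})$ is \emph{independent} of $G_t(\beps_0)$: by definition $\bar{\beps}_{h,0}=(\epsilon_h,\ldots,\epsilon_1,\tilde{\epsilon}_0,\tilde{\epsilon}_{-1},\ldots)$ is a measurable function of $\epsilon_1,\ldots,\epsilon_h$ and of the sequence $(\tilde{\epsilon}_i)_{i\leq 0}$, all of which are independent of $\beps_0=(\epsilon_0,\epsilon_{-1},\ldots)$. Hence $\Cov(G_t(\beps_0),G_t(\bar{\beps}_{h,0}))=0$, and by bilinearity of the covariance
\[
	\gamma_t(h)=\Cov\bigl(G_t(\beps_0),\,G_t(\beps_h)-G_t(\bar{\beps}_{h,0})\bigr),\qquad h\geq 1.
\]

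Next I would bound the trace norm of the right-hand side. Write $\gamma_t(h)=\E\bigl[(U-\E U)\,(V-\E V)^T\bigr]$ with $U=G_t(\beps_0)$ and $V=G_t(\beps_h)-G_t(\bar{\beps}_{h,0})$. Convexity of $\|\cdot\|_\tr$ (Jensen's inequality for the $\R^{d\times d}$-valued integral) together with Lemma \ref{lem:trace-bound} gives $\|\gamma_t(h)\|_\tr\leq \E[\|U-\E U\|\,\|V-\E V\|]$, and the Cauchy--Schwarz inequality then yields
\[
	\|\gamma_t(h)\|_\tr\leq \bigl(\E\|U-\E U\|^2\bigr)^{1/2}\bigl(\E\|V\|^2\bigr)^{1/2}.
\]
The first factor is at most $(\E\|G_t(\beps_0)\|^q)^{1/q}\leq\Theta$ by \eqref{eqn:ass-ergodic}, and the second factor is at most $\Theta\sum_{j=h}^\infty j^{-\beta}$ by the consequence of \eqref{eqn:ass-ergodic} recorded immediately after that assumption (see \cite[C.1]{mies2021}), applied with replacement index $h$. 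This establishes $\|\gamma_t(h)\|_\tr\leq \Theta^2\sum_{j=h}^\infty j^{-\beta}$ for $h\geq 1$. The case $h=0$ follows directly, since $\|\gamma_t(0)\|_\tr=\tr\Cov(G_t(\beps_0))=\E\|G_t(\beps_0)-\E G_t(\beps_0)\|^2\leq\Theta^2$, and the case $h<0$ follows from $\gamma_t(h)=\gamma_t(-h)^T$ — a consequence of the stationarity of $(\epsilon_i)$ — together with the invariance of $\|\cdot\|_\tr$ under transposition.

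For the last assertion, suppose $\beta>2$. Summing the above bounds over all lags,
\[
	\sum_{h\in\Z}\|\gamma_t(h)\|_\tr\leq \Theta^2+2\Theta^2\sum_{h\geq 1}\sum_{j\geq h}j^{-\beta}=\Theta^2\Bigl(1+2\sum_{j\geq 1}j^{\,1-\beta}\Bigr)<\infty,
\]
so $\gamma_t=\sum_{h\in\Z}\gamma_t(h)$ converges absolutely in trace norm and is well-defined. I do not anticipate a genuine obstacle: the only points requiring care are the bookkeeping of which coordinates $\epsilon_i,\tilde{\epsilon}_i$ enter $\bar{\beps}_{h,0}$ so as to secure the independence $G_t(\beps_0)\perp G_t(\bar{\beps}_{h,0})$, and the elementary matrix estimate $\|\E[xy^T]\|_\tr\leq(\E\|x\|^2)^{1/2}(\E\|y\|^2)^{1/2}$, which combines Jensen's inequality, Lemma \ref{lem:trace-bound}, and Cauchy--Schwarz.
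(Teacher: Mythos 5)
Your proposal is correct and follows essentially the same route as the paper's proof: decompose $\gamma_t(h)$ via the coupled sequence $\bar{\beps}_{h,0}$, use the independence of $\beps_0$ and $\bar{\beps}_{h,0}$ to annihilate the first covariance term, and then bound the trace norm by Lemma \ref{lem:trace-bound} together with Cauchy--Schwarz and the tail bound implied by \eqref{eqn:ass-ergodic}. Your treatment of the cases $h=0$ and $h<0$ and the explicit centering in the Cauchy--Schwarz step are slightly more careful than the paper's, but the argument is the same.
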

\begin{proof}[Proof of Proposition \ref{prop:cov}]
	We have
	\begin{align*}
		\gamma_{t}(h) 
		&=  \Cov\left[ G_{t}(\beps_0), G_{t}(\beps_h) \right] \\
		&= \Cov\left[ G_{t}(\beps_0), G_{t}(\bar{\beps}_{h,0}) \right] + \Cov\left[  G_{t}(\beps_0), G_{t}(\beps_h) - G_{t}(\bar{\beps}_{h,0}) \right] \\
		&= \Cov\left[  G_{t}(\beps_0), G_{t}(\beps_h) - G_{t}(\bar{\beps}_{h,0}) \right]
	\end{align*}
	since $\beps_0$ and $\bar{\beps}_{h,0}$ are independent.
	Now, we use Lemma \ref{lem:trace-bound}, assumption \eqref{eqn:ass-ergodic}, and the triangle inequality for expectations, to obtain
	\begin{align*}
		\|\Cov\left[  G_{t}(\beps_0), G_{t}(\beps_h) - G_{t}(\bar{\beps}_{h,0}) \right]\|_\tr 
		&\leq \E \|G_{t}(\beps_0)\|_2 \|G_{t}(\beps_h) - G_{t}(\bar{\beps}_{h,0})\|_2  \\
		&\leq \sqrt{\E \|G_{t}(\beps_0)\|_2^2} \sqrt{ \E \|G_{t}(\beps_h) - G_{t}(\bar{\beps}_{h,0})\|_2^2 } \\
		&\leq \Theta\, \sum_{j=h}^\infty \Theta j^{-\beta}.
	\end{align*}
	Finiteness of the long-run covariance is an immediate consequence.
\end{proof}

\subsection{Gaussian approximation for independent random vectors}

The proof of Theorem \ref{thm:wasserstein-eldan} is largely analogous to the work of \cite{eldan2020}.
The difference is that we account for the unboundedness of the random vectors, and we allow the random vectors to be non-identically distributed. 
Among other arguments, we employ the following technical result, which is also used in the proof of \cite[Thm.\ 10]{eldan2020}.
Since the corresponding step in the work of Eldan et al.\ is rather brief, we present additional details. 

\begin{proposition}\label{prop:BM-change}
	Let $(\Omega,\F,(\F_u)_{u\geq 0},P)$ be a filtered probability space, and let $B_{u,1}$ and $B_{u,2}$ be two independent Brownian motions w.r.t.\ $\F_u$, and let $A_u, \bar{A}_u\in\R^{d\times d}$ be adapted processes.
	Then there exists a Brownian motion $\bar{B}_u$ such that for all $T\geq 0$,
	\begin{align*}
		\int_0^T A_u\, dB_{u,1} + \int_0^T \bar{A}_u\, dB_{u,2} 
		&= \int_0^T \sqrt{A_u A_u^T + \bar{A}_u \bar{A}_u^T} \, d\bar{B}_u.
	\end{align*}
\end{proposition}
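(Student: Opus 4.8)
The plan is to construct $\bar B_u$ directly as a stochastic integral against $(B_{\cdot,1},B_{\cdot,2})$ and then verify via L\'evy's characterization that it is a Brownian motion. Write $C_u = \sqrt{A_uA_u^T + \bar A_u\bar A_u^T}$, which is a symmetric positive semidefinite (hence adapted) process. The natural candidate is
\begin{align*}
	\bar B_u = \int_0^u C_s^{+}\left( A_s\,dB_{s,1} + \bar A_s\,dB_{s,2}\right) + \int_0^u (\I - C_s^{+}C_s)\, dW_s,
\end{align*}
where $C_s^{+}$ denotes the Moore--Penrose pseudoinverse of $C_s$, and $W$ is an auxiliary Brownian motion independent of $B_{\cdot,1},B_{\cdot,2}$ (enlarging the probability space if necessary). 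The second term is a ``filler'' that supplies genuine Brownian increments on the kernel of $C_s$, which is needed because $C_s$ may be singular; it does not interfere with the identity we want since $C_s(\I-C_s^+C_s)=0$. First I would record the measurability/adaptedness of $C_s$, $C_s^+$, $C_s^+C_s$ — the pseudoinverse of a symmetric psd matrix is a measurable function of its entries, e.g.\ via the spectral decomposition or as a limit $C_s^+ = \lim_{\varepsilon\downarrow 0}(C_s^2+\varepsilon\I)^{-1}C_s$ — so all integrands are progressively measurable and bounded on compacts, making the integrals well defined.

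Next I would compute the quadratic (co)variation of $\bar B$. Using independence of $B_{\cdot,1},B_{\cdot,2},W$, the matrix quadratic variation density of the vector $A_s\,dB_{s,1}+\bar A_s\,dB_{s,2}$ is $A_sA_s^T+\bar A_s\bar A_s^T = C_s^2$, so
\begin{align*}
	d[\bar B,\bar B^T]_s = C_s^{+}C_s^2 C_s^{+}\, ds + (\I-C_s^{+}C_s)(\I-C_s^{+}C_s)^T\,ds.
\end{align*}
Since $C_s$ is symmetric psd, $C_s^+C_s^2C_s^+ = C_s^+C_s$ is the orthogonal projection onto $\mathrm{range}(C_s)$, and $(\I-C_s^+C_s)$ is the complementary projection; being symmetric idempotents, their squares are themselves and they sum to $\I$. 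Hence $d[\bar B,\bar B^T]_s = \I\,ds$. Combined with the fact that $\bar B$ is a continuous local martingale started at $0$ (as a sum of stochastic integrals against Brownian motions), L\'evy's characterization gives that $\bar B$ is a standard $d$-dimensional Brownian motion with respect to the enlarged filtration.

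Finally I would verify the claimed identity. Since $C_s$ is symmetric psd, $C_sC_s^+ = C_s^+C_s$ is the projection onto $\mathrm{range}(C_s)$, and $C_s(\I - C_s^+C_s) = 0$, so
\begin{align*}
	\int_0^T C_s\,d\bar B_s = \int_0^T C_sC_s^{+}\left(A_s\,dB_{s,1}+\bar A_s\,dB_{s,2}\right).
\end{align*}
It remains to see that $C_sC_s^+$ acts as the identity on the integrands $A_s\,dB_{s,1}$ and $\bar A_s\,dB_{s,2}$, i.e.\ that $(\I-C_s^+C_s)A_s = 0$ and $(\I-C_s^+C_s)\bar A_s = 0$ almost everywhere. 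This is because $\mathrm{range}(A_s)\subseteq\mathrm{range}(A_sA_s^T)\subseteq\mathrm{range}(C_s^2)=\mathrm{range}(C_s)$ (using $0\preceq A_sA_s^T\preceq C_s^2$, so any vector killed by $C_s^2$ is killed by $A_sA_s^T$, hence by $A_s^T$; dually $\ker C_s=\ker C_s^2\subseteq\ker A_s^T$ means $\mathrm{range}(A_s)\subseteq\mathrm{range}(C_s)$), and likewise for $\bar A_s$; therefore $(\I-C_s^+C_s)A_s=0$ on $\mathrm{range}$-complement grounds. Plugging this in yields $\int_0^T C_s\,d\bar B_s = \int_0^T A_s\,dB_{s,1}+\int_0^T\bar A_s\,dB_{s,2}$, which is the assertion. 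The main obstacle is the degeneracy of $C_s$: one must handle the pseudoinverse carefully, ensure the construction is measurable, and check that the range inclusions above hold pathwise a.e.\ so that the ``filler'' term is genuinely orthogonal to the part we care about; away from that, everything is a routine L\'evy-characterization argument.
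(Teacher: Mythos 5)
Your proposal is correct and follows essentially the same route as the paper's proof: both construct $\bar{B}$ by integrating a generalized inverse of $\sqrt{A_uA_u^T+\bar A_u\bar A_u^T}$ against the given integrals and adding an independent Brownian ``filler'' on the kernel, then invoke L\'evy's characterization. Your Moore--Penrose pseudoinverse $C_s^+$ plays exactly the role of the paper's regularized inverse $\bar{\mathcal{A}}_s^{-1}$ (they agree on $\mathrm{range}(C_s)$, which is all that matters), and your explicit verification of the range inclusion $\mathrm{range}(A_s)\subseteq\mathrm{range}(C_s)$ is a welcome detail that the paper leaves implicit.
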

\begin{proof}[Proof of Proposition \ref{prop:BM-change}]
	Introduce the matrix $\mathcal{A}_u = \sqrt{A_u A_u^T + \bar{A}_u \bar{A}_u^T}$.
	Since $\mathcal{A}_u$ is symmetric positive semidefinite, we may write $\mathcal{A}_u = \sum_{j=1}^m \lambda_j v_j v_j^T$ for orthogonal vectors $v_j$, eigenvalues $\lambda_j>0$, and some $0\leq m\leq d$.
	By extending $v_j$ to a full orthonormal basis, we define the regular matrix $\bar{\mathcal{A}}_u = \mathcal{A}_u + \sum_{j=m+1}^d v_j v_j^T$.
	Now let $B_{u,3}$ be a third Brownian motion, independent from $B_{u,1}$ and $B_{u,2}$, and define
	\begin{align*}
		\bar{B}_u &= \int_0^u \bar{\mathcal{A}}_z^{-1} A_z\,dB_{z,1} 
		+ \int_0^u \bar{\mathcal{A}}_z^{-1} \bar{A}_z\,dB_{z,2} 
		+ \int_{0}^u (\bar{\mathcal{A}}_z - \mathcal{A}_z)\, dB_{z,3}. 
	\end{align*}
	Note that $(\bar{\mathcal{A}}_u - \mathcal{A}_u) = \sum_{j=m+1}^d v_j v_j^T$ is a projection matrix.
	Then $\bar{B}_u$ is a continuous local martingale with quadratic variation 
	\begin{align*}
		[\bar{B}]_u &= \int_0^u \bar{\mathcal{A}}_z^{-1} A_u A_u^T \bar{\mathcal{A}}_u^{-1} \, du 
		+ \int_0^u \bar{\mathcal{A}}_z^{-1} \bar{A}_z \bar{A}_z^T \bar{\mathcal{A}}_z^{-1} \, dz
		+ \int_0^u (\bar{\mathcal{A}}_z - \mathcal{A}_z) \, dz \\
		&= \int_0^u I_d\, zs.
	\end{align*}
	Thus, $\bar{B}_u$ is a standard $d$-variate Brownian motion by virtue of Lévy's characterization, see \cite[Thm.\ 3.3.16]{Karatzas1998}.
	Moreover, $\mathcal{A}_u \bar{\mathcal{A}}_u^{-1} A_u = A_u$, and $\mathcal{A}_u \bar{\mathcal{A}}_u^{-1} \bar{A}_u = \bar{A}_u$, and $\mathcal{A}_u (\bar{\mathcal{A}}_u - \mathcal{A}_u) = 0$.
	Thus,
	\begin{align*}
		\int_0^T \mathcal{A}_u \, d\bar{B}_u 
		&= \int_0^T A_u \, dB_{u,1} + \int_0^T \bar{A}_u \, dB_{u,2},
	\end{align*}
	completing the proof.
\end{proof}

\begin{proof}[Proof of Theorem \ref{thm:wasserstein-eldan}]
	We start by truncating the random variables as $X_t^\diamond = \frac{X_t}{\|X_t\|} (\|X_t\| \wedge \beta_t)$, such that $\|X_t^\diamond\|\leq \beta_t$.
	Then
	\begin{align*}
		\E \|X_t - X_t^\diamond\|^2 
		&= \E [(\|X_t\| - \beta_t)^2\vee 0] \\
		%&= \int_0^\infty P((\|X_t\| - \beta_t)^2 > x)\, dx \\
		&= \int_{0}^\infty P(\|X_t\|>\sqrt{x}+\beta_t)\, dx \\
		&=  \int_{0}^\infty 2 y P(\|X_t\|>y+\beta_t)\, dy \\
		&\leq  \int_{\beta_t}^\infty 2 z P(\|X_t\|>z)\, dz \\
		&\leq 2\int_{\beta_t}^\infty \frac{b_t^q}{z^{q-1}}\, dz 
		\quad = \frac{2}{q-2} b_t^q \beta_t^{2-q}.
	\end{align*}
	This also implies $\|\E(X_t^\diamond)\|^2 = \|\E(X_t - X_t^\diamond)\|^2 \leq \frac{2}{q-2} b_t^q \beta_t^{2-q}$. 
	Setting $\bar{X}_t = X_t^\diamond - \E(X_t^\diamond)$, we obtain $\E \|X_t - \bar{X}_t\|^2 \leq \frac{4}{q-2} b_t^q \beta_t^{2-q}$, and note that $\|\bar{X}_t\|\leq 2\beta_t$.
	The threshold values $\beta_t$ will be chosen later.
	
	We proceed to approximate $\frac{1}{\sqrt{n}}\sum_{t=1}^n \bar{X}_t$ by a Gaussian random vector.
	According to \cite{eldan2020}, there exist independent Brownian motions $(B^t_s)_{s\geq 0}$, $t=1,\ldots, n$, stopping times $\tau_t$ and adapted processes $\Gamma_s^t\in\R^{d\times d}$ such that $\Gamma^t_s=0$ for $s\geq \tau_t$, and $\bar{X}_t \deq \tilde{X}_t = \int_0^{\infty} \Gamma_s^t \, dB_s^t$.
	Moreover, each $\Gamma_s^t$ is a symmetric positive semidefinite projection matrix, i.e.\ $\Gamma_s^t \Gamma_s^t = \Gamma_{s}^t$.
	By applying Proposition \ref{prop:BM-change} inductively, we find another Brownian motion $B_s$ such that $\tilde{S}_n=\frac{1}{\sqrt{n}}\sum_{t=1}^n \tilde{X}_t = \int_0^\infty \tilde{\Gamma}_s d B_t$, where $\tilde{\Gamma}_s = \sqrt{ \frac{1}{n} \sum_{t=1}^n (\Gamma_s^t)^2 }$.
	Denote 
	\begin{align*}
		\bar{\Y}_n=\int_0^\infty \sqrt{\E (\tilde{\Gamma}_s)^2}\, dB_s \sim \mathcal{N}(0, \Cov(\tilde{S}_n)).
	\end{align*}
	Analogously to the proof of Eldan et al., we find that
	\begin{align*}
		\E \|\tilde{S}_n - \bar{\Y}_n\|^2 
		&\leq \int_0^\infty \E \, \tr\left[  \left(\tilde{\Gamma}_s - \sqrt{ \frac{1}{n} \sum_{t=1}^n \E (\Gamma_s^t)^2 } \right)^2  \right]\, ds,
	\end{align*}
	and the integrand may be bounded as
	\begin{align*}
		\E \, \tr\left[  \left(\tilde{\Gamma}_s - \sqrt{ \frac{1}{n} \sum_{t=1}^n \E (\Gamma_s^t)^2 } \right)^2  \right]
		&\leq 4 \frac{1}{n} \sum_{t=1}^n \E \,\tr[(\Gamma_{s}^t)^2] 
		= 4 \frac{1}{n} \sum_{t=1}^n \E \,\tr[\Gamma_{s}^t], \\
		\E \, \tr\left[  \left(\tilde{\Gamma}_t - \sqrt{ \frac{1}{n} \sum_{t=1}^n \E (\Gamma_s^t)^2 } \right)^2  \right]
		&\leq \frac{1}{n} \tr \left[ \E \left( \frac{1}{n} \sum_{t=1}^n (\Gamma_s^t)^4 \right) \E \left( \frac{1}{n} \sum_{t=1}^n (\Gamma_s^t)^2 \right)^\dagger \right] \\
		&\leq \frac{d}{n},
	\end{align*} 
	where $A^\dagger$ denotes the pseudo-inverse of a matrix $A$.
	As in \cite[Thm.\ 1]{eldan2020}, we find
	\begin{align*}
		\E \|\tilde{S}_n - \bar{\Y}_n\|^2 
		&\leq \frac{4}{n} \sum_{t=1}^n \int_0^\infty [\E\, \tr(\Gamma_s^t)] \wedge \tfrac{d}{n}\, ds \\
		&\leq \frac{4}{n} \sum_{t=1}^n \int_0^{4\beta_t^2 \log_2(n)} \frac{d}{n}\, ds + \int_{4\beta_t^2 \log_2(n)}^\infty d\cdot P(\tau_t > s)\, ds \\
		&= \frac{C d \log(n) \sum_{t=1}^n \beta_t^2}{n^2} + \frac{4}{n} \sum_{t=1}^n 2\beta_t^2 \int_{2\log_2(n)}^\infty d \cdot P(\tau_t > s\cdot 2\beta_t^2)\, ds\\
		&\leq  \frac{C d \log(n) \sum_{t=1}^n \beta_t^2}{n^2} + \frac{4d}{n} \sum_{t=1}^n 2\beta_t^2 \int_{2\log_2(n)}^\infty \frac{1}{2^{s-1}}\, ds\\
		&\leq \frac{C d \log(n) \sum_{t=1}^n \beta_t^2}{n^2},
	\end{align*}
	for some universal $C$, which may change from line to line.
	
	By potentially changing the underlying probability space again, we may also decompose $\bar{\Y}_n = \frac{1}{\sqrt{n}}\sum_{t=1}^n \bar{Y}_t$ for independent random vectors $\bar{Y}_t\sim\mathcal{N}(0,\Cov(\bar{X}_t))$.
	For example, we can construct the $Y_t$ first, sum them up to obtain $\bar{\Y}_n$, and lastly construct the $\tilde{X}_t, t=1,\ldots, n$, from the conditional distribution given $\bar{\Y}_n$. 
	
	Now denote $S_n = \frac{1}{\sqrt{n}} \sum_{t=1}^n X_t$ and $\bar{S}_n = \frac{1}{\sqrt{n}} \sum_{t=1}^n \bar{X}_t$, and note that $\bar{\Y}_n\sim\mathcal{N}(0, \Cov(\bar{S}_n))$ since $\bar{S}_n \deq \tilde{S}_n$.
	It is possible to construct independent Gaussian random vectors $Y_t^*, t=1,\ldots, n$, such that the Gaussian random vector 
	\begin{align*}
	\left(\bar{Y}_1,\ldots,\bar{Y}_n,\;Y_1^*,\ldots, Y_n^* \right) \in \R^{2nd} 
	\end{align*}
	has the same covariance structure as the non-Gaussian random vector
	\begin{align*}
		\left(\bar{X}_1,\ldots, \bar{X}_n,\; (X_1-\bar{X}_1),\ldots, (X_n-\bar{X}_n)\right) \in \R^{2nd}.
	\end{align*}
	In particular, for $\Y_n^* = \sum_{t=1}^n Y_t^*$, we have $(\bar{\Y}_n + \Y_n^*)\sim\mathcal{N}(0,\Cov(S_n))$.
	Then
	\begin{align*}
		\E\left\| \tilde{S}_n - (\bar{\Y}_n+\Y_n^*) \right\|^2 
		&\leq 2 \E \left\| \tilde{S}_n - \bar{\Y}_n \right\|^2 + 2 \E \|\Y_n^*\|^2 \\
		&=    2 \E \left\| \tilde{S}_n - \bar{\Y}_n \right\|^2 + 2 \E \|S_n - \bar{S}_n\|^2 \\
		&\leq 2 \E \left\| \tilde{S}_n - \bar{\Y}_n \right\|^2 + \frac{2}{n} \sum_{t=1}^n \E \|X_t - \bar{X}_t\|^2  \\
		&\leq \frac{C d \log(n) \sum_{t=1}^n \beta_t^2}{n^2} + \frac{1}{n}\frac{4}{q-2} \sum_{t=1}^n b_t^q \beta_t^{2-q}.
	\end{align*}
	Now choose $\beta_t = b_t n^\frac{1}{q} d^{-\frac{1}{q}}$ to obtain the desired approximation rate.
	Setting $Y_t = \bar{Y}_t + Y_t^* \sim\mathcal{N}(0, \Cov(X_t))$ completes the proof.
\end{proof}

\begin{proof}[Proof of Theorem \ref{thm:eldan-sequential}]
	In the sequel, $C=C(q)$ denotes a deterministic factor depending on $q$ only, whose value might change from line to line.
	
	We split the sum into blocks of size $L\leq n$, and let $M=\lceil\frac{n}{L}\rceil$. 
	The block size $L$ will be specified later.
	Introduce the blocks
	\begin{align*}
		S^j = \sum_{t=(j-1)L+1}^{jL \wedge n} X_t,\qquad j=1,\ldots, M.
	\end{align*}
	By virtue of Theorem \ref{thm:wasserstein-eldan}, on a richer probability space, there exist Gaussian random vectors $Y_t \sim\mathcal{N}(0,\Cov(X_t))$ and independent random vectors $\tilde{X}_t\deq X_t$, $t=(j-1)L+1,\ldots, jL$ such that 
	\begin{align*}
		\E \left\| \frac{1}{\sqrt{n}}\sum_{t=(j-1)L+1}^{jL \wedge n} (\tilde{X}_t - Y_t)	\right\|^2 
		&\leq  \frac{C}{(q-2)\wedge 1} \left( \frac{d}{L} \right)^{1-\frac{2}{q}} \frac{L}{n}   B^2 \log(n).
	\end{align*}
	We may assume that these random vectors for $j=1,\ldots, M$, are defined on the same (product-)probability space, because the $S^j$ are independent.
	Then Doob's maximal inequality yields
	\begin{align*}
		&\E\left[ \max_{r=1,\ldots, M} \left\| \frac{1}{\sqrt{n}}\sum_{j=1}^r \sum_{t=(j-1)L+1}^{jL \wedge n} (\tilde{X}_t - Y_t)	\right\|^2 \right]\\
		&\leq 4\, \E \left\| \frac{1}{\sqrt{n}}\sum_{j=1}^M\sum_{t=(j-1)L+1}^{jL \wedge n} (\tilde{X}_t - Y_t) \right\|^2 \\
		&= 4 \sum_{j=1}^M \E\left\| \frac{1}{\sqrt{n}}\sum_{t=(j-1)L+1}^{jL \wedge n} (\tilde{X}_t - Y_t) \right\|^2 \\
		&\leq  C_q \frac{M\, L}{n}  B^2  \left(\frac{d}{L}\right)^{1-\frac{2}{q}} \log(n) \\
		&\leq  C_q  B^2  \left(\frac{d}{L}\right)^{1-\frac{2}{q}} \log(n).
	\end{align*}

	Now note that $\E \|Y_t\|^q \leq C (\E \|Y_t\|^2 )^\frac{q}{2}=C (\E \|\tilde{X}_t\|^2 )^\frac{q}{2}  \leq C \E \|\tilde{X}_t\|^q$ for some $C=C(q)$, see Lemma \ref{lem:gauss-norm}.
	Hence, for all $j=1,\ldots, M$, the Rosenthal inequality (Theorem \ref{thm:rosenthal}) yields
	\begin{align*}
		\left(\E\left[ \max_{(j-1)L+1 \leq k\leq jL} \left\|  \sum_{t=(j-1)L+1}^{k\wedge n} \tilde{X}_t \right\|^q\right) \right]^\frac{1}{q}
		&\leq  C \sqrt{L} B, \\
		\left(\E\left[ \max_{(j-1)L+1 \leq  k\leq jL} \left\|  \sum_{t=(j-1)L+1}^{k\wedge n} Y_t \right\|^q\right) \right]^\frac{1}{q}
		&\leq C \sqrt{L} B.
	\end{align*}
	Thus,
	\begin{align*}
		& \left(\E \max_{k=1,\ldots,n} \left\| \frac{1}{\sqrt{n}}\sum_{t=1}^k (\tilde{X}_t-Y_t)  \right\|^2\right)^\frac{1}{2} \\
		&\leq C B \sqrt{\log n} \left(\frac{d }{L}\right)^{\frac{1}{2}-\frac{1}{q}}
		\quad + \frac{1}{\sqrt{n}} \left(\E\left[ \max_{j=1,\ldots,M} \max_{(j-1)L+1 \leq k\leq jL} \left\| \sum_{t=(j-1)L+1}^{k\wedge n} (\tilde{X}_t - Y_t) \right\|^q\right) \right]^\frac{1}{q} \\
		&\leq C B \sqrt{\log n} \left(\frac{d }{L}\right)^{\frac{1}{2}-\frac{1}{q}}
		\; + \frac{1}{\sqrt{n}} M^{\frac{1}{q}}\max_{j=1,\ldots,M}\left(\E\left[  \max_{(j-1)L+1 \leq k\leq jL} \left\| \sum_{t=(j-1)L+1}^{k\wedge n} (\tilde{X}_t - Y_t) \right\|^q\right) \right]^\frac{1}{q} \\
		&\leq C B \sqrt{\log n} \left(\frac{d }{L}\right)^{\frac{1}{2}-\frac{1}{q}}
		\quad + C M^{\frac{1}{q}} \sqrt{\frac{L}{n}} B \\
		&\leq C  B \sqrt{\log n} \left(\frac{d }{L}\right)^{\frac{1}{2}-\frac{1}{q}}
		\quad + C M^{\frac{1}{q}-\frac{1}{2}} B. 
	\end{align*}
	In the last step, we use that $(\E \max_{i=1,\ldots, M} |\delta_i|^q)^\frac{1}{q} \leq M^\frac{1}{q} \max_{i=1,\ldots,M} (\E|\delta_i|^q)^\frac{1}{q}$ for any random variables $\delta_1,\ldots, \delta_M$.
	Now choose $L$ such that $M\approx \sqrt{n/d}$, to find that
	\begin{align*}
		\left(\E \max_{k=1,\ldots,n} \left\| \frac{1}{\sqrt{n}}\sum_{t=1}^k (\tilde{X}_t-Y_t)  \right\|^2\right)^\frac{1}{2}
		&\leq C B \sqrt{\log(n)} \left( \frac{d}{n} \right)^{\frac{1}{4}-\frac{1}{2q}}.
	\end{align*}
\end{proof}

\begin{proof}[Proof of Theorem \ref{thm:zaitsev}]
	In the sequel, $C$ denotes a deterministic factor, whose value might change from line to line.
	
	We will construct $\tilde{X}_t$ and $Y_t$ such that
	\begin{align}
		P\left( \max_{k=1,\ldots, n} \left\|  \sum_{t=1}^k (\tilde{X}_t-Y_t) \right\| >\tau d^{\frac{15}{2}+\alpha} L_q^\frac{1}{q} \right) 
		&\leq C \tau^{-q}. \label{eqn:zaitsev}
	\end{align}
	This inequality suffices to bound the moment of order $\eta<q$.
	To establish \eqref{eqn:zaitsev}, we apply Corollary 3 of \cite{zaitsev2007} as follows.
	By assumption \eqref{eqn:ass-matrix}, for $l_t=\lambda_{\min}(\Sigma_t)$,
	\begin{align*}
		l_t\|v\|^2 \leq v^T \Sigma_t v \leq l_t c \|v\|^2,\qquad v\in\R^d.
	\end{align*}
	Then $l_t \leq \E \|X_t\|^2 \leq (\E \|X_t\|^q)^\frac{2}{q}\leq L_q^\frac{2}{q}$. 
	We distinguish the two cases (i) $\frac{4e^2}{c} L_q^\frac{2}{q} \leq \sum_{t=1}^n l_t$, and (ii) $\frac{4e^2}{c} L_q^\frac{2}{q} > \sum_{t=1}^n l_t$.
	Note that $c\geq 1$.
	
	\underline{Case (i):}
	If $4e^2 L_q^\frac{2}{q} \leq \sum_{t=1}^n l_t$, then we can find integers $0=m_0<m_1 < \ldots < m_s=n$, $s\leq n$, such that \begin{align*}
		4e^2 L_q^\frac{2}{q} \leq \sum_{t=m_{k-1} + 1}^{m_k} l_t \leq 8e^2 L_q^\frac{2}{q}.
	\end{align*} 
	This is in particular possible because $l_t \leq L_q^\frac{2}{q}$ for all $t$.
	We may then verify condition (1.13) of \cite{zaitsev2007} with $r=2$, i.e.\
	\begin{align*}
		4e^2 L_q^\frac{2}{q} \leq \sum_{t=m_{k-1}+1}^{m_k} l_t \leq v^T \Cov\left( \sum_{t=m_{k-1}+1}^{m_k} X_t \right) v \leq \sum_{t=m_{k-1}+1}^{m_k} l_t c \leq c\cdot 8e^2 L_q^\frac{2}{q}. 
	\end{align*}
	Using $\log^*(d) \leq a d^\frac{\alpha}{2}$ for some universal factor $a=a(c)$ depending on $\alpha>0$, we have 
	\begin{align*}
		d^{\frac{15+\alpha}{2}} \log^*(d)\, r\, L_q^{\frac{1}{q}} \log^*(s) \leq \tilde{a} d^{\frac{15}{2}+\alpha} L_q^\frac{1}{q} \log^*(n), 
	\end{align*}
	for some $\tilde{a}$ depending on $c$ and $\alpha$ only.
	Corollary 3 of \cite{zaitsev2007} yields $\tilde{X}_t$ and $Y_t$ as specified, such that for some universal $\bar{a}=\bar{a}(c,\alpha)$, and some $\underline{a}=\underline{a}(c,\alpha)$, $\underline{a}^*=\underline{a}^*(c,\alpha)$, and any $z\geq \bar{a} d^{\frac{15}{2}+\alpha}  L_q^\frac{1}{q} \log^*(n)$,
	\begin{align*}
		P\left( \max_{k=1,\ldots, n} \left\|  \sum_{t=1}^k (\tilde{X}_t-Y_t) \right\| >5z \right) 
		&\leq 2 L_q z^{-q} + \exp\left( -\frac{\underline{a} z}{L_q^\frac{1}{q} d^\frac{9}{2} \log^*(d)} \right)\\
		&\leq 2 L_q z^{-q} + \exp\left( -\frac{\underline{a}^* z}{L_q^\frac{1}{q} d^{\frac{9}{2}+\alpha} } \right).
	\end{align*}
	In particular, for $\tau\geq \bar{a}$,
	\begin{align*}
		&P\left( \max_{k=1,\ldots, n} \left\|  \sum_{t=1}^k (\tilde{X}_t-Y_t) \right\| >5\tau d^{\frac{15}{2}+\alpha} L_q^\frac{1}{q} \log^*(n) \right) \\
		&\leq 2 \tau^{-q} \log^*(n)^{-q} + \exp\left( - 5 \log^*(n) \underline{a}^* \tau d^3 \right) \\
		&\leq 2\tau^{-q} + \exp(-5 \underline{a}^* \tau).
	\end{align*}
	This establishes \eqref{eqn:zaitsev}.
	
	\underline{Case (ii):}
	If $4e^2 L_q^\frac{2}{q} > \sum_{t=1}^n l_t$, the construction is simpler. 
	In this case, we may choose $\tilde{X}=X_t$ and $Y_t\sim \mathcal{N}(0,\Sigma_t)$ independent, coupled arbitrarily with the $\tilde{X}$.
	Then, for any $\tau\geq 1$,
	\begin{align*}
		&P\left( \max_{k=1,\ldots, n} \left\|  \sum_{t=1}^k (\tilde{X}_t-Y_t) \right\| >5\tau d^{\frac{15}{2}+\alpha} L_q^\frac{1}{q} \log^*(n) \right) \\
		&\leq P\left( \max_{k=1,\ldots, n} \left\|  \sum_{t=1}^k \tilde{X}_t \right\| >\tau d^7 L_q^\frac{1}{q} \right) + P\left( \max_{k=1,\ldots, n} \left\|  \sum_{t=1}^k Y_t \right\| >  \tau d^7 L_q^\frac{1}{q}\right) \\
		&\leq \frac{\E \left[\max_{k=1,\ldots,n}\left\|\sum_{t=1}^k  \tilde{X}_t \right\|^q \right]}{\tau^q  d^{7q} L_q } + \frac{\E \left[\max_{k=1,\ldots,n}\left\|\sum_{t=1}^k  Y_t \right\|^q \right]}{\tau^q  d^{7q} L_q } \\
		&\leq C \frac{\sum_{t=1}^n  \E \left\|  \tilde{X}_t \right\|^q  + \left(\sum_{t=1}^n  \E \left\|  \tilde{X}_t \right\|^2 \right)^\frac{q}{2} }{\tau^q  d^{7q} L_q } 
		+C \frac{\sum_{t=1}^n  \E \left\|  Y_t \right\|^q  + \left(\sum_{t=1}^n  \E \left\|  Y_t \right\|^2 \right)^\frac{q}{2} }{\tau^q d^{7q} L_q },  \\
	\intertext{for some $C=C(q)$ via Theorem \ref{thm:rosenthal},}
		&\leq C \frac{L_q  + \left(\sum_{t=1}^n  c\, d\, l_t \right)^\frac{q}{2} }{\tau^q d^{7q} L_q } 
		+C \frac{\sum_{t=1}^n  \E (\left\|  Y_t \right\|^2)^{\frac{q}{2}}   + \left(\sum_{t=1}^n  c\, d\, l_t \right)^\frac{q}{2} }{\tau^q  d^{7q} L_q }  \\
	\intertext{by virtue of Lemma \ref{lem:gauss-norm} since $Y_t$ is Gaussian,}
		&\leq C\frac{L_q  +\sum_{t=1}^n  l_t^\frac{q}{2} d^{\frac{q}{2}}  + \left(\sum_{t=1}^n  c\, d\, l_t \right)^\frac{q}{2} }{\tau^q  d^{7q} L_q }  \\
		&\leq C\frac{L_q  +(\sum_{t=1}^n  l_t\,d )^{\frac{q}{2}}  + \left(\sum_{t=1}^n  c\, d\, l_t \right)^\frac{q}{2} }{\tau^q  d^{7q} L_q }  \\
		&\leq C\frac{L_q  +\sum_{t=1}^n  l_t^\frac{q}{2} d^{\frac{q}{2}}  + \left(\sum_{t=1}^n  c\, d\, l_t \right)^\frac{q}{2} }{\tau^q  d^{7q} L_q }  \\
	\intertext{because $\E \|Y_t\|_q^q \leq d\, \max_{j=1,\ldots,d} \E |Y_{t,j}|^q \leq C d\, l_t$ by virtue of Lemma \ref{lem:gauss-norm},}
		&\leq C \frac{L_q + (4e^2d)^{\frac{q}{2}} L_q + (4c e^2d)^{\frac{q}{2}} L_q }{\tau^q d^{7q} L_q } \\
		&\leq C \tau^{-q}.
	\end{align*}
	This establishes \eqref{eqn:zaitsev}.
\end{proof}

\subsection{Moment bound (Theorem \ref{thm:liu})}

The following high-dimensional moment inequality may be obtained as a special case of \cite[Thm.\ 4.1]{Pinelis1994}.
\begin{theorem}[Rosenthal inequality]\label{thm:rosenthal}
	For each $2\leq r\leq q < \infty$, there exists a finite factor $C_{q,r}$ such that for any $n,d\in\N$, and any martingale-difference sequence $X_t$ in $\R^d$,
	\begin{align*}
		\left(\E \max_{k\leq n} \left\| \sum_{t=1}^k X_t \right\|_r^q \right)^\frac{1}{q}
		&\leq C_{q,r} n^{\frac{1}{2}-\frac{1}{q}} \left(\sum_{t=1}^n \E  \|X_t\|_{r}^q \right)^\frac{1}{q} \\
		&\leq C_{q,r} n^\frac{1}{2}\max_{t\leq n} (\E \|X_t\|_r^q)^\frac{1}{q} .
	\end{align*}
	If the $X_t$ are independent random vectors with $\E(X_t)=0$, then
	\begin{align*}
		\left(\E \max_{k\leq n} \left\| \sum_{t=1}^k X_t \right\|_r^q \right)^\frac{1}{q}
		&\leq C_{q,r} \left[\left(\sum_{t=1}^n \E  \|X_t\|_{r}^q \right)^\frac{1}{q}  + \left( \sum_{t=1}^n \E \|X_t\|_r^2 \right)^\frac{1}{2} \right].
	\end{align*}	
\end{theorem}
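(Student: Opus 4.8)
The plan is to invoke the vector-valued Rosenthal--Burkholder inequality of \cite[Thm.\ 4.1]{Pinelis1994} in the Banach space $(\R^d,\|\cdot\|_r)$ and then to simplify the resulting bound. The decisive structural fact is that for $r\geq 2$ the space $\ell_r$ is $(2,D)$-smooth with a smoothness constant $D=D(r)$ (of order $\sqrt{r-1}$) that does \emph{not} depend on the dimension $d$; this is exactly what renders the final inequality dimension-agnostic. First I would record this smoothness property, so that Pinelis's theorem applies to the martingale-difference sequence $X_t$ (with respect to its natural filtration $\F_t$) with a constant $C_{q,r}$ depending only on $q$ and $r$. The output is a bound of Rosenthal type,
\begin{align*}
	\left(\E \max_{k\leq n} \left\| \sum_{t=1}^k X_t \right\|_r^q\right)^\frac{1}{q}
	&\leq C_{q,r}\left[ \left(\E\, V_n^{q/2}\right)^\frac{1}{q} + \left(\E \max_{t\leq n}\|X_t\|_r^q\right)^\frac{1}{q}\right],
\end{align*}
where $V_n=\sum_{t=1}^n \E\big[\|X_t\|_r^2\,\big|\,\F_{t-1}\big]$ is the predictable quadratic variation. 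The rest of the argument is the reduction of these two terms.

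For the jump term I would bound $\max_{t\leq n}\|X_t\|_r^q\leq \sum_{t=1}^n\|X_t\|_r^q$, giving $(\E\max_t\|X_t\|_r^q)^{1/q}\leq (\sum_t \E\|X_t\|_r^q)^{1/q}$. For the quadratic-variation term I would use that conditional expectation is a contraction on $L^{q/2}$ (valid since $q\geq 2$) together with Minkowski's inequality to obtain $(\E V_n^{q/2})^{2/q}\leq \sum_{t=1}^n (\E\|X_t\|_r^q)^{2/q}$, and then the power-mean (Hölder) inequality $\sum_t a_t\leq n^{1-2/q}(\sum_t a_t^{q/2})^{2/q}$ with $a_t=(\E\|X_t\|_r^q)^{2/q}$, which produces the factor $n^{1/2-1/q}$ and recovers the first displayed inequality. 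The second displayed inequality then follows from $(\sum_t \E\|X_t\|_r^q)^{1/q}\leq n^{1/q}\max_t(\E\|X_t\|_r^q)^{1/q}$ together with $n^{1/2-1/q}\cdot n^{1/q}=n^{1/2}$.

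In the independent, centered case the reduction simplifies: since $X_t$ is independent of $\F_{t-1}$, the predictable quadratic variation $V_n=\sum_t \E\|X_t\|_r^2$ is deterministic, so $(\E V_n^{q/2})^{1/q}=(\sum_t \E\|X_t\|_r^2)^{1/2}$, which is precisely the Gaussian-type term in the last inequality; the jump term is handled exactly as before. The main obstacle is not this reduction, which is routine, but verifying that Pinelis's inequality may be invoked with a dimension-free constant: one must confirm that the $(2,D)$-smoothness constant of $\ell_r^d$ can be taken independent of $d$, and that the precise hypotheses of \cite[Thm.\ 4.1]{Pinelis1994} (separability, the exact form of the smoothness condition, and the definition of the quadratic variation) are met by $(\R^d,\|\cdot\|_r)$. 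Some care is also needed to match Pinelis's formulation of the quadratic-variation term to the predictable version $V_n$ used above.
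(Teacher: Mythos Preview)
Your proposal is correct and follows essentially the same route as the paper: invoke Pinelis's Theorem~4.1, bound the jump term by $(\sum_t \E\|X_t\|_r^q)^{1/q}$, and reduce the predictable quadratic variation term via H\"older/Jensen to produce the $n^{1/2-1/q}$ factor (your Minkowski-then-power-mean reduction and the paper's H\"older-inside-then-Jensen reduction are equivalent rearrangements). The one presentational difference is that the paper sidesteps the smoothness verification you flag by embedding $(\R^d,\|\cdot\|_r)$ into $L_r(\N,\text{counting measure})$, so that the $(2,D)$-smoothness constant is automatically dimension-free.
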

\begin{proof}[Proof of Theorem \ref{thm:rosenthal}]
	Each $X_t\in\R^d$ may be interpreted as a mapping from $S=\N$ to $\R$.
	If we endow $S$ with the counting measure, we may regard $X_t \in L_r(S)$, with $\|X_t\|_{L_r(S)}   = \|X_i\|_r$.
	Denote the canonical filtration by $\F_t = \sigma(X_1,\ldots, X_t)$.
	Then Theorem 4.1 of \cite{Pinelis1994} yields
	\begin{align}
		&\quad\left(\E \max_{k\leq n} \left\| \sum_{t=1}^k X_t \right\|_r^q \right)^\frac{1}{q} 
		= \left(\E \max_{k\leq n} \left\| \sum_{t=1}^k X_t \right\|_{L_r(S)}^q \right)^\frac{1}{q}\nonumber \\ 
		&\leq C_{q,r}\left[ \left(\E \max_{t\leq n} \|X_t\|_{L_r(S)}^q \right)^\frac{1}{q} 
		+ \left(\E\left( \sum_{t=1}^n \E(\|X_t\|_{L_r(S)}^2|\F_{t-1}) \right)^\frac{q}{2}\right)^\frac{1}{q} \right] \nonumber \\
		&\leq C_{q,r}\left[ \left(\sum_{t=1}^n \E  \|X_t\|_{r}^q \right)^\frac{1}{q} 
		+ \left(\E\left( \sum_{t=1}^n \E(\|X_t\|_{r}^2|\F_{t-1}) \right)^\frac{q}{2}\right)^\frac{1}{q} \right] \label{eqn:rosenthal-proof} \\
		&\leq C_{q,r}\left[ \left(\sum_{t=1}^n \E  \|X_t\|_{r}^q \right)^\frac{1}{q} 
		+ n^{\frac{1}{2} - \frac{1}{q}} \left(\E \sum_{t=1}^n \E(\|X_t\|_{r}^2|\F_{t-1})^\frac{q}{2}\right)^\frac{1}{q} \right] \nonumber \\
		\intertext{by applying Hölder's inequality to the second sum, with exponents $\frac{q}{2}$ and $\frac{q}{q-2}$,}
		& \leq C_{q,r} \left[ \left(\sum_{t=1}^n \E  \|X_t\|_{r}^q \right)^\frac{1}{q} 
		+ n^{\frac{1}{2} - \frac{1}{q}}\left(\E\sum_{t=1}^n \E(\|X_t\|_{r}^q|\F_{t-1})\right)^\frac{1}{q} \right] \nonumber \\
		&\leq 2 C_{q,r} n^{\frac{1}{2}-\frac{1}{q}} \left(\sum_{t=1}^n \E  \|X_t\|_{r}^q \right)^\frac{1}{q}  , \nonumber
	\end{align}
	by Jensen's inequality and since $q\geq 2$.
	If the $X_t$ are independent, we use \eqref{eqn:rosenthal-proof} and the fact that $\E (\|X_t\|^2_{L_r(S)}|\F_{t-1})=\E \|X_t\|_r^2$.
\end{proof}

\begin{proof}[Proof of Theorem \ref{thm:liu}]
	In the sequel, $C=C(q,r)$ denotes a deterministic factor, whose value might change from line to line, and which only depends on $q$ and $r$.
	
	Without loss of generality, let $\E X_{t}=0$.
	As in the proof of \cite[Thm.\ 1]{Liu2013}, define
	\begin{align*}
		X_{t,-1} &= \E (X_t) = 0,\quad t=1,\ldots, n,\\
		X_{t,j} &= \E(X_t|\epsilon_{t},\epsilon_{t-1},\ldots, \epsilon_{t-j}), \quad t=1,\ldots, n,\quad j\in\N_0 \\
		S_{k,j} &= \sum_{t=1}^k X_{t,j},\qquad k=1,\ldots,n,\; j\in\N_0.
	\end{align*}
	Moreover, since $\E\|X_t\|^q < \infty$, the martingale convergence theorem ensures that for each $k=1,\ldots, n$, there exists some $\tilde{X}_k$ such that $\E \|\tilde{X}_k - X_{k,j}\|_r^q\to 0$ as $j\to\infty$.
	The measurability of $G_k:\R^\infty\to\R^d$ with respect to the projection $\sigma$-algebra ensures that $\tilde{X}_k=X_k$.
	Hence, telescoping yields
	\begin{align*}
		S_t = \sum_{k=1}^t X_k = \sum_{j=0}^\infty (S_{t,j}-S_{t,j-1}).
	\end{align*}
	As observed by \cite{Liu2013}, for each $j$, the random vectors $(X_{n-k,j}-X_{n-k,j-1})_{k=0}^{n-1}$ are martingale differences with respect to the filtration $\F_k = \sigma(\epsilon_{n-k-j},\epsilon_{n-k-j+1},\ldots)$.
	Thus, Theorem \ref{thm:rosenthal} and Doob's maximal inequality yield
	\begin{align}
		&\left(\E \max_{t=1,\ldots,n} \|S_{t,j}-S_{t,j-1}\|_r^q\right)^\frac{1}{q} \nonumber \\
		&\leq \left(\E \|S_{n,j}-S_{n,j-1}\|_r^q\right)^\frac{1}{q} + \left(\E \max_{t=1,\ldots,n} \|(S_{n,j}-S_{n,j-1}) -(S_{t,j}-S_{t,j-1})\|_r^q\right)^\frac{1}{q} \nonumber \\
		&\leq C \left(\E \|S_{n,j}-S_{n,j-1}\|_r^q\right)^\frac{1}{q} \nonumber \\
		&\leq 2 C n^{\frac{1}{2}-\frac{1}{q}} \left( \sum_{t=1}^n \E \|X_{t,j}-X_{t,j-1}\|_r^q \right)^\frac{1}{q}. \label{eqn:backwards-martingale}
	\end{align}
	The latter term may be bounded as
	\begin{align}
		 \E \|X_{t,j}-X_{t,j-1}\|_r^q 
		 &= \E \left\| \E \left[ G_t(\beps_t) - G_t(\tilde{\beps}_{t,t-j}) | \epsilon_t,\ldots, \epsilon_{t-j} \right] \right\|_r^q \nonumber \\
		 &\leq \E \left\| G_t(\beps_t) - G_t(\tilde{\beps}_{t,t-j}) \right\|_r^q 
		 \quad \leq \theta_{t,j,q,r}^q. \label{eqn:liu-theta}
	\end{align}
	Hence, we find that
	\begin{align*}
		\left( \E \max_{t=1,\ldots,n} \|S_t\|_r^q \right)^\frac{1}{q}
		&\leq \sum_{j=0}^\infty \left( \E \max_{t=1,\ldots,n} \|S_{t,j}-S_{t,j-1}\|_r^q \right)^\frac{1}{q} \\
		&\leq C n^{\frac{1}{2}-\frac{1}{q}} \sum_{j=0}^\infty \left(  \sum_{t=1}^n \theta_{t,j,q,r}^q\right)^\frac{1}{q} .
	\end{align*}
	This establishes \eqref{eqn:liu-1}.
	
	To establish \eqref{eqn:liu-2}, we may proceed as in \cite[Thm.\ 1]{Liu2013}, replacing (2.3) and (2.4) therein by Theorem \ref{thm:rosenthal}.
	For completeness, we repeat the argument.
	Introduce
	\begin{align*}
		Y_{i,j} &= \sum_{t=(i-1)j+1}^{(ij)\wedge n} (X_{t,j}-X_{t,j-1}), \quad i = 1,\ldots, \left(\lceil \tfrac{n}{j}\rceil \wedge n\right),\, j\in\N_0.
	\end{align*}
	The central observation is that $Y_{1,j}, Y_{3,j},\ldots$, is a sequence of independent random vectors, and so is $Y_{2,j}, Y_{4,j},\ldots$.
	Now decompose
	\begin{align*}
		\left(\E \|S_{n,j}-S_{n,j-1}\|_2^q \right)^\frac{1}{q} 
		&= \left(\E \left\|\sum_{i=1}^{\lceil \frac{n}{j} \rceil} Y_{i,j}\right\|_2^q \right)^\frac{1}{q} 
		&\leq \left( \sum_{i \text{ is odd}} \E \|Y_{i,j}\|_2^q \right)^\frac{1}{q} + \left( \sum_{i \text{ is even}} \E \|Y_{i,j}\|_2^q \right)^\frac{1}{q}.
	\end{align*}
	Hence, by virtue of Theorem \ref{thm:rosenthal},
	\begin{align*}
		\left(\E \|S_{n,j}-S_{n,j-1}\|_2^q \right)^\frac{1}{q} 
		&\leq C \left[ \left( \sum_{i \text{ is odd}} \E \|Y_{i,j}\|_2^2 \right)^\frac{1}{2} + \left( \sum_{i \text{ is even}} \E \|Y_{i,j}\|_2^2 \right)^\frac{1}{2} \right. \\
		&\left. \qquad + \left( \sum_{i \text{ is odd}} \E \|Y_{i,j}\|_2^q \right)^\frac{1}{q} + \left( \sum_{i \text{ is even}} \E \|Y_{i,j}\|_2^q \right)^\frac{1}{q} \right]\\
		&\leq C \left[ \left( \sum_{i} \E \|Y_{i,j}\|_2^2 \right)^\frac{1}{2} + \left( \sum_{i} \E \|Y_{i,j}\|_2^q \right)^\frac{1}{q} \right].
	\end{align*}
	Applying Theorem \ref{thm:rosenthal} for martingales again, and \eqref{eqn:liu-theta}, we obtain
	\begin{align*}
		\left(\E \|Y_{i,j}\|_2^q\right)^\frac{1}{q} 
		&\leq C j^{\frac{1}{2}-\frac{1}{q}} \left( \sum_{t=(i-1)j+1}^{(ij)\wedge n} \E \|X_{t,j}-X_{t,j-1}\|_2^q \right)^\frac{1}{q} \\
		&\leq C j^{\frac{1}{2}-\frac{1}{q}} \left( \sum_{t=(i-1)j+1}^{(ij)\wedge n} \theta_{t,j,q,2}^q \right)^\frac{1}{q}, \\
		\left(\E \|Y_{i,j}\|_2^2\right)^\frac{1}{2} 
		&\leq C \left( \sum_{t=(i-1)j+1}^{(ij)\wedge n} \theta_{t,j,2,2}^2 \right)^\frac{1}{2},
	\end{align*}
	so that,
	\begin{align*}
		\left(\E \|S_{n,j}-S_{n,j-1}\|_2^q \right)^\frac{1}{q} 
		&\leq C \left[ j^{\frac{1}{2}-\frac{1}{q}} \left( \sum_{t=1}^n \theta_{t,j,q,2}^q \right)^\frac{1}{q} + \left( \sum_{t=1}^n \theta_{t,j,2,2}^2 \right)^\frac{1}{2} \right].
	\end{align*}
	Using \eqref{eqn:liu-1}, we may show that
	\begin{align*}
		\left(\E \max_{k\leq n} \|S_{k}-S_{k,n}\|_2^q\right)^\frac{1}{q}
		&\leq C n^{\frac{1}{2}-\frac{1}{q}} \sum_{j=n+1}^\infty \left(\sum_{t=1}^n \theta_{t,j,q,2}^q \right)^\frac{1}{q},
	\end{align*}
	where $S_k = \sum_{t=1}^k X_t$.
	To see this, note that the physical dependence measure of $X_t - X_{t,n}$ is zero for the first $n$ lags. 
	
	Since $S_{k,0}=S_{k,0}-S_{k,-1}$, we may conclude that
	\begin{align*}
		\left(\E \max_{k\leq n} \|S_k\|_2^q\right)^\frac{1}{q} 
		&\leq \left(\E \max_{k\leq n} \|S_{k}-S_{k,n}\|_2^q\right)^\frac{1}{q} + \sum_{j=0}^n \left(\E \max_{k\leq n} \|S_{k,j}-S_{k,j-1}\|_2^q\right)^\frac{1}{q} \\
		&\leq C n^{\frac{1}{2}-\frac{1}{q}} \sum_{j=n+1}^\infty \left(\sum_{t=1}^n \theta_{t,j,q,2}^q \right)^\frac{1}{q} \\
		&\qquad +C \sum_{j=1}^n \left( j^{\frac{1}{2}-\frac{1}{q}} \left( \sum_{t=1}^n \theta_{t,j,q,2}^q \right)^\frac{1}{q} + \left( \sum_{t=1}^n \theta_{t,j,2,2}^2 \right)^\frac{1}{2} \right)  .
	\end{align*}
\end{proof}

\subsection{Blocking}

\begin{proposition}\label{prop:blocking-independent}
	Let $X_t=G(\beps_t)$, $t=1,\ldots,n$, and let $0=t_1 < \ldots < t_M= n$ be positive integers.
	For each $l=1,\ldots, M-1$, let $\tilde{\epsilon}_t^l$ be independent copies of the $\epsilon_t$, and $\bar{\beps}_{t,j}^l = (\epsilon_t,\ldots, \epsilon_{j+1}, \tilde{\epsilon}_j^l,\tilde{\epsilon}_{j-1}^l,\ldots)$, and $\tilde{X}_t = G_t(\bar{\beps}_{t, t_{l}}^l)$ for $t=t_l+1,\ldots t_{l+1}$.
	Then there exists a universal constant $C=C(q)$ such that
	\begin{align*}
		\E \left( \max_{k\leq n} \left\|\sum_{t=1}^k  (X_t - \tilde{X}_t) \right\|^q \right)^\frac{1}{q} 
		&\leq C n^{\frac{1}{2}-\frac{1}{q}} \sum_{j=0}^\infty \left( \sum_{l=1}^{M-1}\sum_{t=t_l+1}^{(t_{l}+j)\wedge t_{l+1}} \theta_{t,j,q}^q \right)^\frac{1}{q} \\
		&\leq C n^{\frac{1}{2}} \sum_{j=0}^\infty [\max_t \theta_{t,j,q}] \left( 1 \wedge \frac{M\cdot j}{n} \right)^\frac{1}{q}.
	\end{align*}
\end{proposition}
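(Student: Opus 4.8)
The plan is to expand $D_t := X_t - \tilde X_t$ along the lags of the two innovation sequences feeding it, in the spirit of the proof of Theorem \ref{thm:liu}, and to reduce the claim to one application of Theorem \ref{thm:rosenthal} per lag. For $t$ with $t_l < t \le t_{l+1}$ let $s_t = t-t_l$ be its position inside block $l$, and put $X_{t,j} = \E[X_t \mid \epsilon_t,\ldots,\epsilon_{t-j}]$ as in the proof of Theorem \ref{thm:liu}, together with $\tilde X_{t,j} = \E[\tilde X_t \mid \text{first } j+1 \text{ coordinates of } \bar{\beps}^l_{t,t_l}]$, i.e.\ conditioning on $\epsilon_t,\ldots,\epsilon_{t-j}$ when $j < s_t$ and on $\epsilon_t,\ldots,\epsilon_{t_l+1},\tilde\epsilon^l_{t_l},\ldots,\tilde\epsilon^l_{t-j}$ when $j\ge s_t$. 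With $X_{t,-1}=\tilde X_{t,-1}=0$ and $E_{t,j} := (X_{t,j}-X_{t,j-1}) - (\tilde X_{t,j}-\tilde X_{t,j-1})$, martingale $L_q$-convergence gives $\sum_{j\ge 0} E_{t,j} = D_t$, and since only finitely many $k$ occur, $\sum_{t\le k} D_t = \sum_{j\ge 0}\sum_{t\le k} E_{t,j}$; by Minkowski it then suffices to bound the $q$-th moment $(\E\max_{k\le n}\|\sum_{t\le k} E_{t,j}\|^q)^{1/q}$ for each fixed $j$ and sum over $j$. (If the asserted right-hand side is infinite there is nothing to prove, so we may assume all relevant $\theta_{t,j,q}$, hence all $X_t$, lie in $L_q$.)

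Two facts about $E_{t,j}$ drive the estimate. First, $\beps_t$ and $\bar{\beps}^l_{t,t_l}$ agree in their first $s_t$ coordinates and have i.i.d.\ $U[0,1]$ coordinates beyond, so $X_{t,j}=\tilde X_{t,j}$ for $j\le s_t-1$; hence $E_{t,j}=0$ unless $j\ge s_t$, i.e.\ unless $t\in\{t_l+1,\ldots,(t_l+j)\wedge t_{l+1}\}$. Second, each of $X_{t,j}-X_{t,j-1}$ and $\tilde X_{t,j}-\tilde X_{t,j-1}$ is a single increment of a projection martingale obtained by resampling one $U[0,1]$ coordinate at lag $j$ from $t$, so by the argument behind \eqref{eqn:liu-theta} (applied to $\bar{\beps}^l_{t,t_l}$, which is again i.i.d.\ $U[0,1]$, for the second increment) one gets $\|X_{t,j}-X_{t,j-1}\|_q\le\theta_{t,j,q}$ and $\|\tilde X_{t,j}-\tilde X_{t,j-1}\|_q\le\theta_{t,j,q}$; thus $\|E_{t,j}\|_q\le 2\theta_{t,j,q}$ and $\sum_{t=1}^n\E\|E_{t,j}\|^q\le 2^q\sum_{l=1}^{M-1}\sum_{t=t_l+1}^{(t_l+j)\wedge t_{l+1}}\theta_{t,j,q}^q$.

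For fixed $j$, process indices in decreasing order with the filtration $\F_k = \sigma(\epsilon_i: i\ge n-k-j)\vee\sigma(\tilde\epsilon^l_i: i\ge n-k-j,\ l=1,\ldots,M-1)$, adapting the choice in \cite{Liu2013}. I would then verify that $(E_{n-k,j})_{k=0}^{n-1}$ is a martingale-difference sequence for $(\F_k)$: $E_{n-k,j}$ is built only from $\epsilon_i$ and $\tilde\epsilon^l_i$ with $i\ge n-k-j$ — crucially, when $E_{n-k,j}\neq 0$ one has $t_l\ge(n-k)-j$, which keeps all $\epsilon$-coordinates feeding its $\tilde X$-part at indices $\ge n-k-j+1$ — so it is $\F_k$-measurable; and both its increments are mean zero given all but their deepest (the $(j+1)$-th) coordinate, namely $\epsilon_{n-k-j}$ and $\tilde\epsilon^l_{n-k-j}$, which are precisely the coordinates missing from $\F_{k-1}$, so $\E[E_{n-k,j}\mid\F_{k-1}]=0$. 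Theorem \ref{thm:rosenthal} then gives $(\E\max_{K\le n}\|\sum_{m=0}^{K-1}E_{n-m,j}\|^q)^{1/q}\le C_q n^{\frac12-\frac1q}(\sum_{t=1}^n\E\|E_{t,j}\|^q)^{1/q}$, and since $\sum_{t\le k}E_{t,j}$ equals the full martingale sum minus a prefix, $\max_{k\le n}\|\sum_{t\le k}E_{t,j}\|\le 2\max_{K\le n}\|\sum_{m=0}^{K-1}E_{n-m,j}\|$; combining with the moment bound above, $(\E\max_{k\le n}\|\sum_{t\le k}E_{t,j}\|^q)^{1/q}\le C n^{\frac12-\frac1q}(\sum_{l}\sum_{t=t_l+1}^{(t_l+j)\wedge t_{l+1}}\theta_{t,j,q}^q)^{1/q}$.

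Summing over $j$ with Minkowski yields the first inequality. The second follows since, for each $j$, the number of nonzero summands is $\sum_{l=1}^{M-1}\min(j,t_{l+1}-t_l)\le\min(Mj,n)=n\bigl(1\wedge\tfrac{Mj}{n}\bigr)$, whence $\sum_l\sum_{t=t_l+1}^{(t_l+j)\wedge t_{l+1}}\theta_{t,j,q}^q\le n\bigl(1\wedge\tfrac{Mj}{n}\bigr)(\max_{t\le n}\theta_{t,j,q})^q$ and $n^{\frac12-\frac1q}\cdot n^{\frac1q}=n^{\frac12}$. The main obstacle is the martingale verification in the third step: one must ensure that the ``deep'' innovations $\epsilon_{n-k-j}$ and $\tilde\epsilon^l_{n-k-j}$ are genuinely fresh at step $k$, i.e.\ that no earlier-processed $E_{t',j}$ with $t'>n-k$ uses them; this holds exactly because $E_{t',j}\neq 0$ forces $t'-t_{l'}\le j$, so the $\epsilon$-inputs of its $\tilde X$-increment never descend to index $n-k-j$, despite the block-specific resampled innovations.
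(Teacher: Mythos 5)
Your proof is correct and follows essentially the same route as the paper: the same lag-wise telescoping decomposition (your $X_{t,j},\tilde X_{t,j}$ coincide with the paper's conditional expectations given $\eta_t,\ldots,\eta_{t-j}$, by independence of the auxiliary innovations), the same reversed-index martingale filtration, one application of Theorem \ref{thm:rosenthal} per lag, and the same counting of nonzero summands for the second inequality. Your MDS verification is in fact somewhat more explicit than the paper's, but there is no substantive difference.
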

\begin{proof}[Proof of Proposition \ref{prop:blocking-independent}]
	Let $\eta_t = (\epsilon_t, \tilde{\epsilon}_t^1,\ldots, \tilde{\epsilon}_t^M)$, and for $t=1,\ldots, n$, and $j\in\N_0$,
	\begin{align*}
		X_{t,j}&=\E(X_t|\eta_t,\ldots,\eta_{t-j}), \\
		\tilde{X}_{t,j}&=\E(\tilde{X}_t|\eta_t,\ldots,\eta_{t-j}), \\
		\chi_{t,j} &= X_{t,j}-\tilde{X}_{t,j}.
	\end{align*}
	We also set $X_{t,-1}=\tilde{X}_{t,-1} = \E (X_t)$, and accordingly $\chi_{t,-1}=0$.
	For $t=t_l+1,\ldots, t_{l+1}$, and $t-j\geq t_l+1$, we have $\tilde{X}_{t,j}
	= X_{t,j}$ so that $\chi_{t,j}=0$ in this case.
	For $t-j\leq t_l$, we use that $(\tilde{X}_{t,j},\tilde{X}_{t,j-1})\deq (X_{t,j}, X_{t,j-1})$.
	Hence,
	\begin{align*}
		 \E \|\tilde{X}_{t,j}-\tilde{X}_{t,j-1}\|^q
		&= \E \|X_{t,j}-X_{t,j-1}\|^q \\ 
		&=\E \left\| \E \left[ G_t(\beps_t) - G_t(\tilde{\beps}_{t,t-j}) | \epsilon_t,\ldots, \epsilon_{t-j} \right] \right\|^q \\
		&\leq \E \left\| G_t(\beps_t) - G_t(\tilde{\beps}_{t,t-j}) \right\|^q 
		\quad \leq \theta_{t,j,q}^q.
	\end{align*}
	Hence, for $t_{l}+1\leq t \leq t_l$,
	\begin{align*}
		\E \| \chi_{t,j} - \chi_{t,j-1}\|^q \leq \begin{cases}
			0, & t-j\geq t_l+1, \\
			2^q\theta_{t,j,q}^q,& t-j\leq t_l.
		\end{cases}
	\end{align*}
	
	We may now apply the same martingale construction as in the proof of Theorem \ref{thm:liu}.
	In particular, the random vectors $(\chi_{t-k,j}-\chi_{t-k,j-1})_{k=1}^{n-1}$ are martingale differences with respect to the filtration $\F_k = \sigma(\eta_{n-k-j},\eta_{n-k-j+1},\ldots)$.
	As in \eqref{eqn:backwards-martingale}, we obtain
	\begin{align*}
		\E \left( \max_{k\leq n} \left\|\sum_{t=1}^k  (X_t - \tilde{X}_t) \right\|^q \right)^\frac{1}{q}
		&\leq \E \left( \left(\max_{k\leq n} \sum_{j=0}^\infty \left\|\sum_{t=1}^k  (\chi_{t,j}-\chi_{t,j-1}) \right\|\right)^q \right)^\frac{1}{q} \\
		&\leq \E \left( \left(\sum_{j=0}^\infty \max_{k\leq n}  \left\|\sum_{t=1}^k  (\chi_{t,j}-\chi_{t,j-1}) \right\|\right)^q \right)^\frac{1}{q} \\
		&\leq \sum_{j=0}^\infty \E \left( \max_{k\leq n}  \left\| \sum_{t=1}^k (\chi_{t,j}-\chi_{t,j-1}) \right\|^q \right)^\frac{1}{q} \\ 
		&\leq C\sum_{j=0}^\infty \E \left( \left\| \sum_{t=1}^n (\chi_{t,j}-\chi_{t,j-1}) \right\|^q \right)^\frac{1}{q} \\ 
		&\leq C n^{\frac{1}{2}-\frac{1}{q}} \sum_{j=0}^\infty \left( \sum_{t=1}^n \E \|\chi_{t,j}-\chi_{t,j-1}\|^q \right)^\frac{1}{q}, 
	\intertext{using Theorem \ref{thm:rosenthal},}
		&= C n^{\frac{1}{2}-\frac{1}{q}} \sum_{j=0}^\infty \left( \sum_{l=1}^{M-1}\sum_{t=t_l+1}^{t_{l+1}} \E \|\chi_{t,j}-\chi_{t,j-1}\|^q \right)^\frac{1}{q} \\
		&\leq C n^{\frac{1}{2}-\frac{1}{q}} \sum_{j=0}^\infty \left( \sum_{l=1}^{M-1}\sum_{t=t_l+1}^{(t_{l}+j)\wedge t_{l+1}} \theta_{t,j,q}^q \right)^\frac{1}{q} \\
		&\leq C n^{\frac{1}{2}-\frac{1}{q}} \sum_{j=0}^\infty [\max_t \theta_{t,j,q}] \left( n \wedge (M\cdot j) \right)^\frac{1}{q} \\
		&=C n^{\frac{1}{2}} \sum_{j=0}^\infty [\max_t \theta_{t,j,q}] \left( 1 \wedge \frac{M\cdot j}{n} \right)^\frac{1}{q}.
	\end{align*}
\end{proof}

\subsection{Gaussian approximation for time series (Theorem \ref{thm:Gauss-ts})}

\begin{proof}[Proof of Theorem \ref{thm:Gauss-ts}]
	In the sequel, $C$ denotes a universal constant depending on $q$ and $\beta$, which may vary from line to line.
	
	\underline{Blockwise independence:}
	Set $0=t_0<t_1 < \ldots < t_M=n$ such that $L\leq [t_{l}-t_{l-1}] \leq 2L$ for $l=1,\ldots, M$, $M=\lfloor \frac{n}{L}\rfloor$, and define $\tilde{X}_t$ as in Proposition \ref{prop:blocking-independent}. 
	Then,
	\begin{align}
		\left(\E \max_{k\leq n} \left\|\sum_{t=1}^k  (X_t - \tilde{X}_t) \right\|^2 \right)^\frac{1}{2}
		&\leq C\,\Theta n^\frac{1}{2} \left[\sum_{j=1}^{\lfloor \frac{n}{M}\rfloor}  j^{\frac{1}{2}-\beta} \left(\tfrac{M}{n}\right)^\frac{1}{2} + \sum_{j= \lfloor \frac{n}{M}\rfloor+1}^\infty j^{-\beta}  \right] \nonumber \\
		&\leq C \, \Theta n^\frac{1}{2} \left[ \left(\tfrac{n}{M}\right)^{1-\beta} + \left( \tfrac{M}{n} \right)^\frac{1}{2} \right] \nonumber \\
		&\leq C \,\Theta \left[ n^\frac{1}{2} L^{1-\beta} + \left( \tfrac{n}{L} \right)^\frac{1}{2} \right]. \label{eqn:blocking-1}
	\end{align}
	
	\underline{Bounding the moments:}
	Note that $(X_t)_{t=t_{l-1}+1}^{t_{l}} \deq (\tilde{X}_t)_{t=t_{l-1}+1}^{t_{l}}$.
	Hence, Theorem \ref{thm:liu} yields for any $l=1,\ldots, M$,
	\begin{align*}
		&\quad \left( \E \max_{k=t_{l-1}+1,\ldots, t_{l}} \left\| \sum_{t=t_{l-1}+1}^{k} \tilde{X}_t  \right\|^q\right)^\frac{1}{q}
		=\left( \E \max_{k=t_{l-1}+1,\ldots, t_{l}} \left\| \sum_{t=t_{l-1}+1}^{k} X_t  \right\|^q \right)^\frac{1}{q}  \\
		&\leq C [t_l-t_{l-1}]^\frac{1}{2} \sum_{j=1}^\infty \Theta j^{-\beta} \quad \leq C L^\frac{1}{2} \Theta,
	\end{align*} 
	since $\beta>1$.

	Now define the blockwise sums $\xi_l = \sum_{t=t_{l-1}+1}^{t_{l}} \tilde{X}_t \deq \sum_{t=t_{l-1}+1}^{t_{l}} X_t$, for $l=1,\ldots, M-1$.
	By construction, the $\xi_l$ are independent random vectors and satisfy $(\E \|\xi_l\|^q)^\frac{1}{q} \leq C \Theta L^\frac{1}{2}$. 
	Moreover, the blocking error may be controlled as
	\begin{align}
		&\left(\E \max_{k\leq n} \left\| \frac{1}{\sqrt{n}}\sum_{t=1}^k  \tilde{X}_t - \frac{1}{\sqrt{n}}\sum_{\substack{l=1 \\ t_{l} \leq k}}^{M} \xi_l\right\|^q \right)^\frac{1}{q} \nonumber \\
		&= \left(  \E \max_{l=1,\ldots, M} \max_{k=t_{l-1}+1,\ldots, t_{l}} \left\| \frac{1}{\sqrt{n}}\sum_{t=t_{l-1}+1}^{k} \tilde{X}_t  \right\|^q \right)^\frac{1}{q} \nonumber\\
		&\leq \left( \sum_{l=1}^{M} \E \max_{k=t_{l-1}+1,\ldots, t_{l}} \left\| \frac{1}{\sqrt{n}}\sum_{t=t_{l-1}+1}^{k} \tilde{X}_t  \right\|^q \right)^\frac{1}{q} \nonumber\\
		& \leq  M^\frac{1}{q} \max_{l=1,\ldots,M} \left( E \max_{k=t_{l-1}+1,\ldots, t_{l}} \left\| \frac{1}{\sqrt{n}}\sum_{t=t_{l-1}+1}^{k} \tilde{X}_t  \right\|^q \right)^\frac{1}{q} \nonumber \\
		&\leq C \Theta \left(L/n\right)^{\frac{1}{2}-\frac{1}{q}}. \label{eqn:blocking-2}
	\end{align}
	
	\underline{Gaussian approximation:}
	Since the $\xi_l$ are independent, we may apply Theorem \ref{thm:eldan-sequential} with $B=C L^\frac{1}{2} \Theta$, and sample size $M$.
	Hence, on a potentially different probability space, there exist random vectors $(\xi'_l)_{l=1}^M \deq  (\xi_l)_{l=1}^M$ and independent Gaussian random vectors $Y_l \sim \mathcal{N}(0, \Cov(\xi_l))$, $l=1,\ldots, M$, such that
	\begin{align}
		\left(\E \max_{k=1,\ldots,M} \left\| \frac{1}{\sqrt{n}} \sum_{l=1}^k (\xi'_l - Y_l) \right\|^2\right)^\frac{1}{2}
		\leq C \Theta \sqrt{\log(M)} \left( \frac{d}{M} \right)^{\frac{1}{4}-\frac{1}{2q}}, \label{eqn:blocking-3}
	\end{align} 
	for some $C$ depending on $q$.
	Upon possibly enlarging the probability space by an independent uniform random variable \citep[Lem.\ 21.1]{Billingsley1999}, we may assume that there exist random vectors $X_t',\tilde{X}_t',$ such that $\{ X_t', \tilde{X}_t', \xi'_l : t=1,\ldots,n; \, l=1,\ldots,M \} \deq \{ X_t, \tilde{X}_t, \xi_l : t=1,\ldots,n; \, l=1,\ldots,M \}$.
	On this probability space, we may combine \eqref{eqn:blocking-1}, \eqref{eqn:blocking-2}, and \eqref{eqn:blocking-3}, to obtain, 
	\begin{align}
		\left(\E \max_{k\leq n} \frac{1}{\sqrt{n}}\left\|\sum_{t=1}^k  X'_t - \sum_{\substack{l=1 \\ t_{l} \leq k}}^{M} Y_l\right\|^2 \right)^\frac{1}{2}
		&\leq C \Theta \sqrt{\log(n)} \left( \frac{d L}{n} \right)^{\frac{1}{4}-\frac{1}{2q}} + C\Theta \left( L^{1-\beta} + L^{-\frac{1}{2}} \right). \label{eqn:blocking-4}
	\end{align}
	Notice that the term \eqref{eqn:blocking-2} is negligible compared to \eqref{eqn:blocking-3}. 
	
	For each $l$, we can construct on the same probability space independent normal random vectors $Y_t' \sim\mathcal{N}(0, \Cov(\xi_l)/[t_l - t_{l-1}])$ such that $\sum_{t=t_{l-1}+1}^{t_l} Y_t' = Y_l$.
	Doob's maximal inequality and the Gaussianity imply that, 
	\begin{align*}
		\left(\E \max_{k=t_{l-1}+1,\ldots, t_l} \left\|\sum_{t=t_{l-1}+1}^k Y_t'\right\|^q\right)^\frac{1}{q}
		&\leq C (\E \|Y_l\|^q)^\frac{1}{q} 
		\leq C (\E \|Y_l\|^2)^\frac{1}{2}
		\leq C (\E \|\xi_l\|^q)^\frac{1}{q}
	\end{align*}
	see Lemma \ref{lem:gauss-norm}.
	Hence, 
	\begin{align*}
		\left( \E \max_{k\leq n} \left\|\frac{1}{\sqrt{n}}\sum_{t=1}^k Y_t' - \frac{1}{\sqrt{n}}\sum_{\substack{l=1 \\ t_{l} \leq k}}^{M} Y_l \right\|^2 \right)^\frac{1}{2} 
		&\leq \left( \sum_{l=1}^{M} \E \max_{k=t_{l-1}+1,\ldots, t_{l}} \left\| \frac{1}{\sqrt{n}}\sum_{t=t_{l}+1}^{k} Y_t'  \right\|^q \right)^\frac{1}{q} \\
		&\leq C \Theta n^{-\frac{1}{2}} L^\frac{1}{2} M^\frac{1}{q}
		\quad \leq C \Theta (L/n)^{\frac{1}{2}-\frac{1}{q}},
	\end{align*}
	which is identical to the bound \eqref{eqn:blocking-2}.
	Proceeding as in \eqref{eqn:blocking-4}, we find that
	\begin{align}
		&\left( \E \max_{k\leq n} \left\|\frac{1}{\sqrt{n}}\sum_{t=1}^k (X_t' -Y_t') \right\|^2 \right)^\frac{1}{2} \nonumber \\
		&\leq C \Theta \sqrt{\log(n)} \left( \frac{d L}{n} \right)^{\frac{1}{4}-\frac{1}{2q}} + C\Theta \left( L^{1-\beta} + L^{-\frac{1}{2}} \right), \label{eqn:approx-rate-r2}
	\end{align}
	If $\beta\geq\frac{3}{2}$, we match the rates by choosing $L = \lceil (n/d)^\frac{q-2}{3q-2} \rceil$.
	This choice of $L$ yields
	\begin{align*}
		\eqref{eqn:approx-rate-r2} 
		& \leq C \Theta \sqrt{\log(n)}  \left( \frac{d}{n} \right)^{\frac{q-2}{6q-4}},
	\end{align*}
	If $1\leq \beta <\frac{3}{2}$, we match the rates by choosing $L=\lceil (n/d)^{\frac{q-2}{4q\beta-3q-2}} \rceil \geq 1$. 
	This choice yields
	\begin{align*}
		\eqref{eqn:approx-rate-r2}
		&\leq C \Theta \sqrt{\log(n)}  \left( \frac{d}{n} \right)^{\frac{(\beta-1)(q-2)}{q(4\beta-3)-2}}.
	\end{align*}
	Combining both cases yields
	\begin{align*}
		\eqref{eqn:approx-rate-r2}
		&\leq C \Theta \sqrt{\log(n)}  \left[ \left( \frac{d}{n} \right)^{\frac{q-2}{6q-4}} +  \left( \frac{d}{n} \right)^{\frac{(\beta-1)(q-2)}{q(4\beta-3)-2}} \right].
	\end{align*}
	This establishes the claim \eqref{eqn:Gauss-ts-1}.

	\underline{Explicit covariances:}
	Note that for $t=t_{l-1}+1,\ldots, t_l$, the covariance matrices of the independent Gaussian random vectors $Y_t'$ are given by
	\begin{align}
		\Cov(Y_t')
		&= \frac{1}{t_l-t_{l-1}} \sum_{r,s=t_{l-1}+1}^{t_l} \Cov (G_r(\beps_r), G_s(\beps_s)), \label{eqn:Y-ts-cov}
	\end{align}
	which is different from the local long-run covariance matrix 
	\begin{align*}
		\Sigma_t 
		= \sum_{h\in \Z} \Cov (G_t(\beps_0), G_t(\beps_h))
		&= \frac{1}{t_l-t_{l-1}} \sum_{r=t_{l-1}+1}^{t_l} \sum_{s=-\infty}^\infty \Cov (G_t(\beps_r), G_t(\beps_s)) \\
		&= \frac{1}{t_l-t_{l-1}} \sum_{r=1}^{t_l-t_{l-1}} \sum_{s=-\infty}^\infty \Cov (G_t(\beps_r), G_t(\beps_s)).
	\end{align*}
	To establish \eqref{eqn:Gauss-ts-2}, we want to find a different approximation in terms of some independent random vectors $Y_t^*\sim\mathcal{N}(0,\Sigma_t)$.
	To this end, for $t=t_{l-1}+1,\ldots, t_l$, and $l=1,\ldots, M$, define the matrices
	\begin{align*}
		\Sigma_t^l 
		&= \frac{1}{t_l-t_{l-1}} \sum_{r,s=t_{l-1}+1}^{t_l} \Cov (G_t(\beps_r), G_t(\beps_s)) \\
		&= \frac{1}{t_l-t_{l-1}} \sum_{r,s=1}^{t_l-t_{l-1}} \Cov (G_t(\beps_r), G_t(\beps_s)).
	\end{align*}
	By Proposition \ref{prop:cov},
	\begin{align}
		\|\Sigma_t - \Sigma_t^l\|_\tr 
		&\leq \frac{1}{t_{l}-t_{l-1}} \sum_{r=1}^{t_l-t_{l-1}} \sum_{\substack{s\leq 0 \text{ or} \\ s>t_{l}-t_{l-1}}} \left\|\Cov(G_t(\beps_r), G_t(\beps_s))\right\|_\tr \nonumber \\
		&\leq  \frac{C \Theta^2}{t_{l}-t_{l-1}} \sum_{r=1}^{t_l-t_{l-1}} \sum_{\substack{s\leq 0 \text{ or} \\ s>t_{l}-t_{l-1}}}  |r-s|^{1-\beta}\nonumber \\
		&\leq  \frac{C\,\Theta^2}{t_{l}-t_{l-1}} \sum_{r=1}^{t_l-t_{l-1}} r^{2-\beta} + |r-(t_l-t_{l-1})|^{2-\beta}\nonumber 
	\intertext{since $\beta>2$,}
		&\leq \frac{C\,\Theta^2}{t_{l}-t_{l-1}} (|t_l-t_{l-1}|^{3-\beta} + 1) \nonumber \\
		&\leq C \Theta^2 L^{(2-\beta) \vee (-1)}. \label{eqn:cov-trace-1}
	\end{align}
	Moreover,
	\begin{align*}
		&\|\Sigma^l_t - \Cov(Y_t')\|_\tr \\
		&= \frac{1}{t_l-t_{l-1}} \left\| \sum_{r,s=t_{l-1}+1}^{t_l} \Cov (G_t(\beps_r), G_t(\beps_s)) - \Cov (G_r(\beps_r), G_s(\beps_s))   \right\|_\tr \\
		&= \frac{1}{t_l-t_{l-1}} \left\| \sum_{r,s=t_{l-1}+1}^{t_l} \Cov (G_t(\beps_r)-G_r(\beps_r), G_t(\beps_s)) + \Cov (G_r(\beps_r), G_t(\beps_s) - G_s(\beps_s))   \right\|_\tr \\
		&= \frac{1}{t_l-t_{l-1}} \left\| \sum_{r,s=t_{l-1}+1}^{t_l} \Cov (G_t(\beps_r)-G_r(\beps_r), G_t(\beps_s)) + \Cov (G_s(\beps_s), G_t(\beps_r) - G_r(\beps_r))   \right\|_\tr \\
		&\leq \frac{1}{t_l-t_{l-1}} \sum_{r,s=t_{l-1}+1}^{t_l}\left\|  \Cov (G_t(\beps_r)-G_r(\beps_r), G_t(\beps_s) + G_s(\beps_s)) \right\|_\tr .
	\end{align*}
	By virtue of \eqref{eqn:ass-ergodic}, we have $\left\|  \Cov (G_t(\beps_r)-G_r(\beps_r), G_t(\beps_s) + G_s(\beps_s)) \right\|_\tr \leq \Theta^2 |r-s|^{1-\beta}$, as in Proposition \ref{prop:cov}.
	To obtain an alternative bound, we may apply the identity $\|v w^T\|_\tr = \|v\| \|w\|$ (Lemma \ref{lem:trace-bound}) for $v,w\in\R^d$, and the Cauchy-Schwarz inequality, to obtain
	\begin{align*}
		&\left\|  \Cov (G_t(\beps_r)-G_r(\beps_r), G_t(\beps_s) + G_s(\beps_s)) \right\|_\tr \\
		&\leq \E \left[ \|G_t(\beps_r)-G_r(\beps_r)\| \|G_t(\beps_s) + G_s(\beps_s)\| \right] \\
		&\leq ( \E \|G_t(\beps_0)-G_r(\beps_0)\|^2)^\frac{1}{2}  ( \E \|G_t(\beps_0)+G_s(\beps_0)\|^2)^\frac{1}{2}
		\quad \leq 2 \Theta ( \E \|G_t(\beps_0)-G_r(\beps_0)\|^2)^\frac{1}{2}.
	\end{align*}
	Thus, since $\beta>2$,
	\begin{align}
		\|\Sigma^l_t - \Cov(Y_t)\|_\tr
		&\leq \frac{2\Theta}{t_l-t_{l-1}} \sum_{r,s=t_{l-1}+1}^{t_l} ( \E \|G_t(\beps_0)-G_r(\beps_0)\|^2)^\frac{1}{2} \wedge (\Theta|s-r|^{1-\beta}) \nonumber \\
		&\leq \frac{4\Theta^2}{t_l-t_{l-1}} \sum_{r=t_{l-1}+1}^{t_l} \sum_{u=1}^{2L} \left(\frac{( \E \|G_t(\beps_0)-G_r(\beps_0)\|^2)^\frac{1}{2}}{\Theta} \wedge |u|^{1-\beta} \right) \nonumber \\
		&\overset{\eqref{eqn:sum-min-bound}}{\leq} \frac{C\Theta^{1+\frac{1}{\beta-1}}}{t_l-t_{l-1}} \sum_{r=t_{l-1}+1}^{t_l}  \left[( \E \|G_t(\beps_0)-G_r(\beps_0)\|^2)^\frac{1}{2}\right]^{1-\frac{1}{\beta-1}} \nonumber \\
		&\overset{(*)}{\leq} C\Theta^{1+\frac{1}{\beta-1}}\left[ \frac{1}{t_l-t_{l-1}} \sum_{r=t_{l-1}+1}^{t_l}  ( \E \|G_t(\beps_0)-G_r(\beps_0)\|^2)^\frac{1}{2}\right]^{1-\frac{1}{\beta-1}} \nonumber \\
		&\leq C\Theta^{2} \left[\frac{1}{\Theta} \sum_{u=t_{l-1}+1}^{t_l}( \E \|G_u(\beps_0)-G_{u-1}(\beps_0)\|^2)^\frac{1}{2}\right]^{1-\frac{1}{\beta-1}}.  \label{eqn:cov-trace-2}
	\end{align}
	For the inequality $(*)$, we use Jensen's inequality for the concave function $x\mapsto x^{1-\frac{1}{\beta-1}}$.	
	Moreover, for any $\eta>1$, there exists some universal $C=C(\eta)$ such that for all $A>0$, we have
	\begin{align}
		\sum_{u=1}^\infty (u^{-\eta} \wedge A) \leq \sum_{u=1}^{\lceil A^{-\frac{1}{\eta}}\rceil } A + \sum_{u=\lceil A^{-\frac{1}{\eta}}\rceil+1}^\infty u^{-\eta} 
		\;\leq\; C A^{1-\frac{1}{\eta}}. \label{eqn:sum-min-bound}
	\end{align}
	Thus, combining \eqref{eqn:cov-trace-1} and \eqref{eqn:cov-trace-2}, we find that for $t=t_{l-1}+1,\ldots, t_l$,
	\begin{align}
		\begin{split}
			\|\Sigma_t - \Cov(Y_t')\|_\tr 
			&\leq C\Theta^2 L^{(2-\beta)\vee(-1)} \\
			&\quad + C\Theta^{2} \left[\frac{1}{\Theta} \sum_{u=t_{l-1}+1}^{t_l}( \E \|G_u(\beps_0)-G_{u-1}(\beps_0)\|^2)^\frac{1}{2}\right]^{1-\frac{1}{\beta-1}}.
		\end{split} \label{eqn:cov-frobenius-approx}
	\end{align}
	
	Now let $\Delta_t = \Sigma_t - \Cov(Y_t')$, and $|\Delta_t|$ as in Lemma \ref{lem:Gauss-cov}.
	Then there exist independent Gaussian random vectors $Y_t^*\sim\mathcal{N}(0,\Sigma_t)$ such that $Y_t^*-Y_t'$, $t=1,\ldots,n$, are also independent random vectors with $(Y_t^*-Y_t') \sim\mathcal{N}(0,|\Delta_t|)$.
	Thus, by Doob's maximal inequality, 
	\begin{align*}
		\left(\E \max_{k\leq n} \left\|\sum_{t=1}^k  (Y_t^* - Y_t') \right\|^2 \right)^\frac{1}{2} 
		\leq C \left( \sum_{t=1}^n \E \|Y_t^* - Y_t' \|^2\right)^\frac{1}{2} 
		&= C \left( \sum_{t=1}^n \tr(|\Delta_t|)\right)^\frac{1}{2} \\ 
		&= C \left( \sum_{t=1}^n  \|\Delta_t\|_\tr \right)^\frac{1}{2} .
	\end{align*}
	Now apply \eqref{eqn:cov-frobenius-approx} and assumption \eqref{eqn:ass-BV} to obtain
	\begin{align}
		&\left(\E \max_{k\leq n} \left\|\sum_{t=1}^k  (Y_t^* - Y_t') \right\|^2 \right)^\frac{1}{2} \nonumber \\
		&\leq C\Theta  n^\frac{1}{2} L^{(1-\frac{\beta}{2})\vee (-\frac{1}{2})} \nonumber \\
		&\quad + C \Theta \left(  \sum_{l=1}^M [t_{l} - t_{l-1}] \left(\frac{ 1}{\Theta} \sum_{t=t_{l-1}+1}^{t_l} (\E\|G_t(\beps_0)-G_{t-1}(\beps_0)\|^2)^\frac{1}{2}\right)^{1-\frac{1}{\beta-1}}  \right)^\frac{1}{2} \nonumber \\
		&\leq C\Theta  n^\frac{1}{2} L^{(1-\frac{\beta}{2})\vee (-\frac{1}{2})} \nonumber \\
		&\quad+ C\Theta  L^\frac{1}{2} \left(\sum_{l=1}^M \left( \frac{ 1}{\Theta} \sum_{t=t_{l-1}+1}^{t_l} \E(\|G_t(\beps_0)-G_{t-1}(\beps_0)\|^2)^\frac{1}{2}\right)^{\frac{\beta-2}{\beta-1}} \right)^\frac{1}{2} \nonumber \\
		&\overset{(**)}{\leq} C\Theta n^\frac{1}{2} L^{(1-\frac{\beta}{2})\vee (-\frac{1}{2})} \nonumber \\
		&\quad+ C\Theta L^\frac{1}{2} \left( \frac{ 1}{\Theta} \sum_{l=1}^M  \sum_{t=t_{l-1}+1}^{t_l} \E(\|G_t(\beps_0)-G_{t-1}(\beps_0)\|^2)^\frac{1}{2} \right)^{\frac{\beta-2}{\beta-1}\frac{1}{2} }  (M^{1-\frac{\beta-2}{\beta-1}})^\frac{1}{2} \nonumber \\
		&\leq C\Theta  n^\frac{1}{2} L^{(1-\frac{\beta}{2})\vee (-\frac{1}{2})} + C\Theta L^{\frac{1}{2}} \Gamma^{\frac{1}{2} \frac{\beta-2}{\beta-1}} M^{\frac{1}{2}\frac{1}{\beta-1}}. \label{eqn:approx-rate-r2-b}
	\end{align}
	At the step $(**)$, we use Hölder's inequality for the outer sum, with exponents $\frac{\beta-1}{\beta-2}>1$ and $(\beta-1)>1$.
	Combining \eqref{eqn:approx-rate-r2-b} with \eqref{eqn:approx-rate-r2}, and using $M\leq C n/L$, and $\Gamma\geq 1$, we obtain
	\begin{align}
		&\left( \E \max_{k\leq n} \left\|\frac{1}{\sqrt{n}}\sum_{t=1}^k (X_t' -Y_t^*) \right\|^2 \right)^\frac{1}{2} \nonumber \\
		&\leq C \Theta \sqrt{\log(n)} \left( \frac{d L}{n} \right)^{\frac{1}{4}-\frac{1}{2q}} + C\Theta \left( L^{1-\beta} + L^{-\frac{1}{2}} \right) \nonumber \\
		&\qquad+C\Theta  L^{(1-\frac{\beta}{2})\vee (-\frac{1}{2})} + C\Theta n^{-\frac{1}{2}} L^{\frac{1}{2}} \Gamma^{\frac{1}{2} \frac{\beta-2}{\beta-1}} M^{\frac{1}{2}\frac{1}{\beta-1}} \nonumber \\
		\begin{split}
			&\leq C\Theta \Gamma^{\frac{1}{2} \frac{\beta-2}{\beta-1}} \sqrt{\log(n)} \left\{  \left( \frac{d L}{n} \right)^{\frac{1}{4}-\frac{1}{2q}} +  L^{1-\frac{\beta}{2}} + L^{-\frac{1}{2}}  + \left(\frac{L}{n} \right)^{\frac{1}{2} \frac{\beta-2}{\beta-1}}\right\} .
		\end{split} 
		\label{eqn:approx-rate-r2-full}
	\end{align}

	Recall that $d\leq cn$ by assumption, for some $c\geq 1$.
	For any $L\in\N, L\leq c\frac{n}{d}$, we have
	\begin{align*}
		\eqref{eqn:approx-rate-r2-full} \leq
		\begin{cases}
			C\Theta  \Gamma^{\frac{1}{2} \frac{\beta-2}{\beta-1}} \sqrt{\log(n)} \left\{  \left( \frac{d L}{n} \right)^{\frac{1}{4}-\frac{1}{2q}} + L^{-\frac{1}{2}}\right\}, 
			& \beta\geq 3, \\
			%%%
			C\Theta  \Gamma^{\frac{1}{2} \frac{\beta-2}{\beta-1}} \sqrt{\log(n)} \left\{  \left( \frac{d L}{n} \right)^{\frac{1}{4}-\frac{1}{2q}} + L^{1-\frac{\beta}{2}} \right\}, 
			& \frac{3+\frac{2}{q}}{1+\frac{2}{q}} < \beta < 3, \\
			%%%
			C\Theta \Gamma^{\frac{1}{2} \frac{\beta-2}{\beta-1}} \sqrt{\log(n)} \left\{  L^{1-\frac{\beta}{2}} + \left( \frac{d L}{n} \right)^{\frac{1}{2} \frac{\beta-2}{\beta-1}}\right\}, 
			& 2<\beta\leq \frac{3+\frac{2}{q}}{1+\frac{2}{q}},
		\end{cases}
	\end{align*}
	where the factor $C=C(\beta,q,c)$ may also depend on $c$.\\
	If $\beta\geq 3$, we choose $L = \lfloor (n/d)^\frac{q-2}{3q-2} \rfloor$.
	This yields
	\begin{align*}
		\eqref{eqn:approx-rate-r2-full}
		%&\leq C\Theta  \Gamma^{\frac{1}{2} \frac{\beta-2}{\beta-1}} \sqrt{\log(n)} \left\{  \left( \frac{d L}{n} \right)^{\frac{1}{4}-\frac{1}{2q}} + L^{-\frac{1}{2}}\right\} \\
		& \leq C \Theta  \Gamma^{\frac{1}{2} \frac{\beta-2}{\beta-1}} \sqrt{\log(n)}  \left( \frac{d}{n} \right)^{\frac{q-2}{6q-4}}.
	\end{align*}
	If $\frac{3+\frac{2}{q}}{1+\frac{2}{q}} < \beta < 3$, we choose $L=\lfloor (n/d)^{(\frac{1}{2}-\frac{1}{q})/(\beta-\frac{3}{2}-\frac{1}{q})}\rfloor \leq \sqrt{n/d}$.
	This yields
	\begin{align*}
		\eqref{eqn:approx-rate-r2-full}
		%&\leq C\Theta  \Gamma^{\frac{1}{2} \frac{\beta-2}{\beta-1}} \sqrt{\log(n)} \left\{  \left( \frac{d L}{n} \right)^{\frac{1}{4}-\frac{1}{2q}} + L^{1-\frac{\beta}{2}} \right\} \\
		&\leq C \Theta \Gamma^{\frac{1}{2} \frac{\beta-2}{\beta-1}} \sqrt{\log(n)} \left( \frac{d}{n} \right)^{ \frac{(\beta-2)(\frac{1}{2}-\frac{1}{q})}{2\beta-3-\frac{2}{q}} }\\
		&= C \Theta  \Gamma^{\frac{1}{2} \frac{\beta-2}{\beta-1}} \sqrt{\log(n)} \left( \frac{d}{n} \right)^{ \frac{(\beta-2)(q-2)}{(4\beta-6)q-4} }.
	\end{align*}
	If $2<\beta\leq \frac{3+\frac{2}{q}}{1+\frac{2}{q}}$, we choose $L=\lfloor (n/d)^\frac{1}{\beta} \rfloor \leq \sqrt{n/d}$.
	This yields 
	\begin{align*}
		\eqref{eqn:approx-rate-r2-full}
		%&\leq C\Theta \Gamma^{\frac{1}{2} \frac{\beta-2}{\beta-1}} \sqrt{\log(n)} \left\{  L^{1-\frac{\beta}{2}} + \left( \frac{d L}{n} \right)^{\frac{1}{2} \frac{\beta-2}{\beta-1}}\right\} \\
		&\leq C\Theta  \Gamma^{\frac{1}{2} \frac{\beta-2}{\beta-1}} \sqrt{\log(n)} \left( \frac{d}{n} \right)^{\frac{1}{2}-\frac{1}{\beta}} .
	\end{align*}
\end{proof}

\subsection{Covariance estimation}

\begin{proof}[Proof of Theorem \ref{thm:cov-est}]
	Throughout the proof, $C=C(\beta,q)$ denotes a factor whose value may change from line to line.
	
	The inequality $\|vv^T - ww^T\|_\tr \leq \|(v-w)w^T\|_\tr + \|w (v-w)^T\|_\tr \leq 2 \|v-w\| (\|v\| + \|w\|)$ for $v,w\in\R^d$ yields
	\begin{align}
	&\E \max_{k=b,\ldots,n} \left\| \hat{Q}(k) - \sum_{t=b}^k \frac{1}{b} \left( \sum_{s=t-b+1}^t G_t(\beps_s) \right)^{\otimes 2} \right\|_\tr \nonumber \\
	&= \E \max_{k=b,\ldots,n} \left\| \frac{1}{b} \sum_{t=b}^k \left( \sum_{s=t-b+1}^t G_s(\beps_s) \right)^{\otimes 2} -  \left( \sum_{s=t-b+1}^t G_t(\beps_s) \right)^{\otimes 2} \right\|_\tr \nonumber \\
	&\leq \sum_{t=b}^n \frac{2}{b}  \left(\E\left\| \sum_{s=t-b+1}^t [G_s(\beps_s) - G_t(\beps_s)] \right\|^2\right)^\frac{1}{2} 
	\left(\E\left\| \sum_{s=t-b+1}^t [G_s(\beps_s) + G_t(\beps_s)] \right\|^2\right)^\frac{1}{2} \nonumber \\
	&\leq \sum_{t=b}^n \frac{C}{b}  \left(\E\left\| \sum_{s=t-b+1}^t [G_s(\beps_s) - G_t(\beps_s)] \right\|^2\right)^\frac{1}{2} \Theta \sqrt{b} \nonumber \\
	\intertext{for some $C=C(\beta)$, using Theorem \ref{thm:liu} and Assumption \eqref{eqn:ass-ergodic} for $\beta>1$,}
	&\leq \frac{C \Theta}{\sqrt{b}}\sum_{t=b}^n   \sum_{s=t-b+1}^t \left(\E\left\|G_s(\beps_0) - G_t(\beps_0) \right\|^2\right)^\frac{1}{2} \nonumber \\
	&\leq C\Theta \sqrt{b}\sum_{t=1}^{n-1} \left(\E\left\|G_{t+1}(\beps_0) - G_t(\beps_0) \right\|^2\right)^\frac{1}{2} 
	\qquad \leq C \Theta^2 \Gamma \sqrt{b}. \label{eqn:varest-1}
	\end{align}
	
	We hence proceed to analyze
	\begin{align*}
	\widetilde{Q}(k) &= \sum_{t=b}^k \eta_t, \quad \eta_t = \frac{1}{b} \left( \sum_{s=t-b+1}^t G_t(\beps_s)\right)^{\otimes 2} \in \R^{d\times d}.
	\end{align*}
	Using Proposition \ref{prop:cov}, we find that
	\begin{align}
	\left\| \E (\eta_t) - \Sigma_t \right\|_\tr
	&= \left\|  \frac{1}{b}\sum_{s,s'=t-b+1}^t \gamma_{t}(s-s') - \sum_{h\in\Z} \gamma_t(h)  \right\|_\tr \nonumber \\
	&\leq \sum_{h\in\Z} \|\gamma_t(h)\|_\tr \left(\frac{|h|\wedge b}{b}\right) \nonumber \\
	&\leq C \Theta^2 \sum_{h\in\Z} |h|^{1-\beta} \left(\frac{|h|\wedge b}{b}\right) \nonumber \\
	&\leq C \Theta^2 (b^{2-\beta} + b^{-1}) . \label{eqn:varest-2}
	\end{align}
	Furthermore, Proposition \ref{prop:cov} yields $\|\Sigma_t\|_\tr \leq C \Theta^2$, for a universal $C$ depending on $\beta$.
	
	Moreover,  we may write $(\eta_t-\E(\eta_t)) = H_t(\beps_t) \in \R^{d\times d}$, for some measurable kernel $H:\R^\infty\to\R$.
	We want to apply Theorem \ref{thm:liu} to the process $[\widetilde{Q}(k) - \E(\widetilde{Q}(k))]$.
	To this end, we use that $\|v w^T\|_\tr = \|v w^T\|_F = \|v\|_2 \|w\|_2$ (Lemma \ref{lem:trace-bound}), and we find that for $j\geq 0$,
	\begin{align*}
	&\left(\E \|H_t(\beps_t) - H_t(\tilde{\beps}_{t,t-j})\|_F^2 \right)^\frac{1}{2} \nonumber \\
	&= \frac{1}{b}  \Bigg(\E\, \Bigg\| \left( \sum_{s=t-b+1}^t G_{t}(\beps_s)\right)\left( \sum_{s=t-b+1}^t G_{t}(\beps_s)\right)^T \nonumber \\
	&\qquad - \left( \sum_{s=t-b+1}^t G_{t}(\tilde{\beps}_{s,t-j})\right)\left( \sum_{s=t-b+1}^t G_{t}(\tilde{\beps}_{s,t-j})\right)^T\Bigg\|_F^2\Bigg)^\frac{1}{2} \nonumber \\
	&\leq \frac{1}{b} \left( \E \left\| \left( \sum_{s=t-b+1}^t G_{t}(\beps_s)\right) \left( \sum_{s=t-b+1}^t G_{t}(\beps_s) - G_{t}(\tilde{\beps}_{s,t-j})\right) \right\|_F^2 \right)^\frac{1}{2} \nonumber \\
	&\qquad + \frac{1}{b} \left( \E \left\| \left( \sum_{s=t-b+1}^t G_{t}(\tilde{\beps}_{s,t-j})\right) \left( \sum_{s=t-b+1}^t G_{t}(\beps_s) - G_{t}(\tilde{\beps}_{s,t-j})\right)^T \right\|_F^2 \right)^\frac{1}{2} \nonumber \\
	&\leq \left( \E \left\| \sum_{s=t-b+1}^t G_{t}(\beps_s)\right\|_2^4\right)^\frac{1}{4} \frac{1}{b} \sum_{s=t-b+1}^t \left(\E \| G_{t}(\beps_s) - G_{t}(\tilde{\beps}_{s,t-j})\|_2^4\right)^\frac{1}{4} \nonumber \\
	&\qquad +\left( \E \left\| \sum_{s=t-b+1}^t G_{t}(\tilde{\beps}_{s,t-j})\right\|_2^4\right)^\frac{1}{4} \frac{1}{b} \sum_{s=t-b+1}^t \left(\E \| G_{t}(\beps_s) - G_{t}(\tilde{\beps}_{s,t-j})\|_2^4\right)^\frac{1}{4} \nonumber \\
	&\leq 2 \left( \E \left\| \sum_{s=t-b+1}^t G_{t}(\beps_s)\right\|_2^4\right)^\frac{1}{4} \frac{1}{b} \sum_{s=t-b+1}^t \left(\E \| G_{t}(\beps_s) - G_{t}(\tilde{\beps}_{s,t-j})\|_2^4\right)^\frac{1}{4} \nonumber \\
	&\leq 2 \left( \E \left\| \sum_{s=t-b+1}^t G_{t}(\beps_s)\right\|_2^4\right)^\frac{1}{4} \frac{\Theta}{b} \sum_{s=(t-b+1)\vee(t-j)}^t ((s-t+j)\vee 1)^{-\beta}. \nonumber
	\end{align*}
	Here, we have slightly abused the notation to mean $\tilde{\beps}_{s,r}=\beps_s$ if $r>s$.
	By applying Theorem \ref{thm:liu} to the kernel $G_{t}$, we find that $\left( \E \left\| \sum_{s=t-b+1}^t G_{t}(\beps_s)\right\|_2^4\right)^\frac{1}{4}\leq C \Theta \sqrt{b}$.
	Hence,
	\begin{align}
	\left(\E \|H_t(\beps_t) - H_t(\tilde{\beps}_{t,t-j})\|_F^2 \right)^\frac{1}{2} 
	&\leq \frac{C\Theta^2}{\sqrt{b}} \sum_{s=(t-b+1)\vee(t-j)}^t ((s-t+j)\vee 1)^{-\beta} \nonumber \\
	&\leq \frac{C\Theta^2}{\sqrt{b}} \sum_{r=0}^{b\wedge j} ((j-r)\vee 1)^{-\beta} \nonumber \\
	&\leq \frac{C\Theta^2}{\sqrt{b}} \left[j^{1-\beta} + ((j-b)\vee 1)^{1-\beta} \right]. \label{eqn:H-phys}
	\end{align}
	Now note that the Frobenius norm of a matrix $A\in\R^{d\times d}$ equals the Euclidean vector norm if $A$ is considered as a vector $A\in\R^{d^2}$.
	Thus, the upper bound \eqref{eqn:H-phys} may be interpreted as the physical dependence measure $\theta_{t,j,2,2}$ of the process $H_t$.
	It may be simplified as
	\begin{align*}
	\theta_{t,j,2,2} \leq \begin{cases}
	\frac{C\Theta^2}{\sqrt{b}}, & j\leq 2b, \\
	\frac{C\Theta^2}{\sqrt{b}} j^{1-\beta}, & j> 2b.
	\end{cases}
	\end{align*}
	Now Theorem \ref{thm:liu} is applicable.
	Since $\beta>2$, we find that
	\begin{align}
	\left( \E \max_{k=b,\ldots,n} \left\| \widetilde{Q}(k) - \E (\widetilde{Q}(k)) \right\|_F^2 \right)^\frac{1}{2} 
	&\leq C  n^{\frac{1}{2}} \sum_{j=1}^\infty \max_{t\leq n} \theta_{t,j,2,2} \nonumber  \\
	&\leq C n^{\frac{1}{2}} \left[ \sum_{j=1}^{2b} \frac{\Theta^2}{\sqrt{b}} + \sum_{j=2b+1}^\infty \frac{\Theta^2}{\sqrt{b}} j^{1-\beta}  \right] \nonumber \\
	&\leq C \Theta^2 \left(\frac{n}{b}\right)^\frac{1}{2} \left[ 2b + (2b)^{2-\beta} \right] \nonumber \\
	&\leq C \Theta^2 \sqrt{n}\sqrt{b}. \label{eqn:varest-3}
	\end{align}
	For a matrix $A\in\R^{d\times d}$, it holds that $\|A\|_{\tr}\leq \sqrt{d} \|A\|_F$.
	Hence, we have shown the inequality $\left( \E \max_{k=b,\ldots,n} \left\| \widetilde{Q}(k) - \E (\widetilde{Q}(k)) \right\|_\tr^2 \right)^\frac{1}{2} \leq C \Theta^2 \sqrt{ndb}$.
	
	Combining \eqref{eqn:varest-1}, \eqref{eqn:varest-2}, and \eqref{eqn:varest-3}, we find that
	\begin{align*}
	&\E \max_{k=1,\ldots,n} \left\| \hat{Q}(k) - \sum_{t=1}^k \Sigma_t  \right\|_\tr \\
	&\leq C\Theta^2 \Gamma \sqrt{b} + C  \Theta^2 \sqrt{ndb} + \sum_{t=b}^n \|\E(\eta_t)-\Sigma_t\|_\tr + \sum_{t=1}^{b-1} \|\Sigma_t\|_\tr \\
	&\leq C\Theta^2 \Gamma \sqrt{b} + C  \Theta^2 \sqrt{ndb} + C n \Theta^2 (b^{2-\beta} + b^{-1}) + Cb\Theta^2 \\
	&\leq C \Theta^2 \left( \Gamma \sqrt{b} + \sqrt{ndb} + n b^{-1} + n b^{2-\beta} \right).
	\end{align*}
	For the last inequality, we use that $b\leq n$, such that $b\leq \sqrt{ndb}$.
	This completes the proof.
\end{proof}

\begin{proof}[Proof of Proposition \ref{prop:var-consistency}]
	Let $L\in \{1,\ldots, n\}$, and $M=\lceil n/L\rceil$, to be specified later.
	Denote $\xi_l = \sum_{t=(l-1)L+1}^{(lL) \wedge n} Y_t$, and $S_l=\sum_{t=(l-1)L+1}^{(lL) \wedge n} \Sigma_t$, and $S_l'=\sum_{t=(l-1)L+1}^{lL \wedge n} \Sigma_t'$, for $l=1,\ldots,M$.
	Then $\xi_l$ are independent Gaussian random vectors, $\xi_l\sim\mathcal{N}(0,S_l)$. 
	Denoting $\Delta_l = S_l - S_l'$, and $|\Delta_l|$ as in Lemma \ref{lem:Gauss-cov}, we find Gaussian random vectors $\zeta_l\sim\mathcal{N}(0,|\Delta_l|)$ such that $\xi_l' = \xi_l + \zeta_l \sim\mathcal{N}(0,S_l')$.
	We may also split $\zeta_l'$ into independent terms, i.e.\ we find independent Gaussian random vectors $Y_t'\sim\mathcal{N}(0,\Sigma_t')$ such that $\xi_l' = \sum_{t=(l-1)L+1}^{(lL) \wedge n} Y_t'$.
	This construction yields that the $(Y_t')_{t=1}^n$ and $(Y_t)_{t=1}^n$ are sequences of independent random vectors, while $Y_t'$ and $Y_{t+1}$ are not necessarily independent.
	We also introduce the notation $\zeta_s= \sum_{t=(l-1)L+1}^s Y_t$ for $s=(l-1)L+1,\ldots, (lL)\wedge n$, and $\zeta_s'$ analogously.
	Then
	\begin{align*}
		&\E \max_{k=1,\ldots,n} \left\| \sum_{t=1}^k (Y_t-Y_t')\right\|^2 \\
		&\leq \E \max_{k=1,\ldots,M} \left\| \sum_{l=1}^k (\xi_l - \xi_l')  \right\|^2 
		+ \E \max_{l=1,\ldots,M} \max_{s=(l-1)L+1,\ldots, (lL)\wedge n} \left\| \sum_{t=(l-1)L+1}^{s} (Y_t - Y_t')  \right\|^2 \\
		&\leq \sum_{l=1}^M \|\Delta_l\|_{\tr} 
		+ 2\E \max_{l=1,\ldots,M} \max_{s=(l-1)L+1,\ldots, (lL)\wedge n} \left\| \sum_{t=(l-1)L+1}^{s} Y_t \right\|^2\\
			&\quad + 2\E \max_{l=1,\ldots,M} \max_{s=(l-1)L+1,\ldots, (lL)\wedge n} \left\| \sum_{t=(l-1)L+1}^{s} Y_t' \right\|^2 \\
		&=\sum_{l=1}^M \|\Delta_l\|_{\tr} 
		+ 2\E \max_{s=1,\ldots,n} \left\| \zeta_s \right\|^2
		+ 2\E \max_{s=1,\ldots,n} \left\| \zeta_s' \right\|^2 \\
		&\leq \sum_{l=1}^M \left[ \left\|\sum_{t=1}^{(lL)\wedge n} \Sigma_t-\Sigma_t'\right\|_{\tr} + \left\|\sum_{t=1}^{(l-1)L} \Sigma_t-\Sigma_t'\right\|_{\tr}\right]
		+ 2\E \max_{s=1,\ldots,n} \left\| \zeta_s \right\|^2
		+ 2\E \max_{s=1,\ldots,n} \left\| \zeta_s' \right\|^2 \\
		&\leq 2 M \max_{k=1,\ldots,n} \left\| \sum_{t=1}^k (\Sigma_t-\Sigma_t') \right\|_\tr
		+ 2\E \max_{s=1,\ldots,n} \left\| \zeta_s \right\|^2
		+ 2\E \max_{s=1,\ldots,n} \left\| \zeta_s' \right\|^2 .
	\end{align*} 

	Since the random vectors $\zeta_s$ are Gaussian, the random variable $\|\zeta_s\|^2$ is sub-exponential with sub-exponential norm bounded by $C\, \tr(\Cov(\zeta_s)) \leq C\, \tr( S_l )$, for some universal factor $C$, and for $s=(l-1)L+1,\ldots, (lL)\wedge n$.
	To see this, denote the sub-exponential norm by $\|\cdot\|_{\psi_1}$.
	Then 
	\begin{align*}
		\| \, \|\zeta_s\|_2^2\,\|_{\psi_1} 
		= \left\| \sum_{j=1}^d \sigma_j^2 \delta_j^2\right\|_{\psi_1} \leq \sum_{j=1}^d \sigma_j^2 \|\delta_j^2\|_{\psi_1},
	\end{align*}
	where $\sigma_j^2$ are the eigenvalues of $\Cov(\zeta_s)$, and $\delta_j\sim\mathcal{N}(0,1)$.
	A consequence of this sub-exponential bound is that, for a potentially larger $C$, 
	\begin{align*}
		\E \max_{s=1\ldots, n} \left\|\zeta_s \right\|^2 \leq C \log(n) \max_l \|S_l\|_{\tr}
		\leq C \log(n) L\rho.
	\end{align*}
	Analogously,
	\begin{align*}
		\E \max_{s=1,\ldots,n} \left\|\zeta_s' \right\|^2 
		\leq  C \log(n) \max_l \|S_l'\|_{\tr}
		&\leq C \log(n) \max_l \left[\|S_l\|_{\tr} + \delta \right] \\
		&\leq C \log(n)  \left[L \rho + \delta \right],
	\end{align*}
	such that
	\begin{align*}
		\E \max_{k=1,\ldots,n} \left\| \sum_{t=1}^k (Y_t-Y_t')\right\|^2 
		&\leq 2 M \delta + C \log(n)\left[L \rho + \delta  \right] \\
		&\leq C \log(n) \left[ \frac{n}{L} \delta + L \rho \right].
	\end{align*}
	To minimize this expression, choose $L=1 \vee \lceil\sqrt{n \delta/\rho}\rceil$. 
	Since $\delta\leq n\rho$, we have $L\leq n$.
	This choice of $L$ yields the desired upper bound.
\end{proof}

\begin{proof}[Proof of Proposition \ref{prop:seq-test}]
	%\underline{Upper bound:}
	%We first show that $\limsup_{n\to\infty} P(\ldots) \leq \alpha$.
	Let $Y_t^* \sim \mathcal{N}(0, \Sigma_t), t=1,\ldots,n$, be the Gaussian approximation as in \eqref{eqn:Gauss-ts-2}.
	In view of \eqref{eqn:Tn-continuity}, we find that
	\begin{align}
		&P\left( T_n(X_1,\ldots, X_n) > a_{\alpha-\nu_n}(\hat{Q}) + \tau_n \right) \nonumber \\
		&\leq P\left( T_n(Y_1^*,\ldots, Y_n^*) > a_{\alpha-\nu_n}(\hat{Q}) + \tfrac{\tau_n}{2} \right)
		+ P\left( \max_{k=1,\ldots,n} \left\| \frac{1}{\sqrt{n}} \sum_{t=1}^k (X_t-Y_t^*) \right\| > \tfrac{\tau_n}{2} \right)\nonumber \\
		&\leq P\left( T_n(Y_1^*,\ldots, Y_n^*) > a_{\alpha-\nu_n}(\hat{Q}) + \tfrac{\tau_n}{2} \right) + \frac{C\Theta_n \sqrt{\log(n)} \left(\tfrac{d_n}{n}\right)^{\xi(q,\beta)}}{\tau_n}\nonumber \\
		&\leq P\left( T_n(Y_1^*,\ldots, Y_n^*) > a_{\alpha}(Q) \right) + P\left( a_{\alpha}(Q) > a_{\alpha-\nu_n}(\hat{Q}) + \tfrac{\tau_n}{2} \right)
		+ \frac{C\Theta_n \sqrt{\log(n)} \left(\tfrac{d_n}{n}\right)^{\xi(q,\beta)}}{\tau_n}\nonumber \\
		&\leq \alpha
		+ P\left( a_{\alpha}(Q) > a_{\alpha-\nu_n}(\hat{Q}) + \tfrac{\tau_n}{2} \right)
		+ \frac{C\Theta_n \sqrt{\log(n)} \left(\tfrac{d_n}{n}\right)^{\xi(q,\beta)}}{\tau_n}. \label{eqn:seq-test-1}
	\end{align}
	It remains to bound the probability $P\left( a_{\alpha}(Q) > a_{\alpha-\nu_n}(\hat{Q}) + \tfrac{\tau_n}{2} \right)$.
	To this end, we employ Proposition \ref{prop:var-consistency}.
	For some cumulative covariance process $\overline{Q}$, let $\overline{\Sigma}_t = \overline{Q}_t - \overline{Q}_{t-1}$, and consider independent Gaussian random vectors $Z_t\sim\mathcal{N}(0,\overline{\Sigma}_t)$, which are coupled with the $Y_t^*\sim\mathcal{N}(0,\Sigma_t)$ such that 
	\begin{align}
		\E \max_{k=1,\ldots,n}\left\| \sum_{t=1}^k Y_t^*- \sum_{t=1}^k Z_t \right\|^2 
		&\leq C \log(n) \left[\sqrt{n\delta\rho} + \rho \right] 
		\quad = \Delta_n, \label{eqn:seq-test-3}
	\end{align}
	with $\rho$ and $\delta$ as in Proposition \ref{prop:var-consistency}.
	Then
	\begin{align*}
		&P\left(T_n(Y_1^*,\ldots, Y_n^*)> a_{\alpha-\nu_n}(\overline{Q}) + \tfrac{\tau_n}{2}\right) \\
		&\leq P\left(T_n(Z_1,\ldots, Z_n)> a_{\alpha-\nu_n}(\overline{Q})\right) + P\left( \frac{1}{\sqrt{n}}\max_{k=1,\ldots,n}\left\| \sum_{t=1}^k Y_t^* - \sum_{t=1}^k Z_t \right\| > \frac{\tau_n}{2} \right) \\
		&\leq P\left(T_n(Z_1,\ldots, Z_n)> a_{\alpha-\nu_n}(\overline{Q})\right) + P\left( \frac{1}{n}\max_{k=1,\ldots,n}\left\| \sum_{t=1}^k Y_t^* - \sum_{t=1}^k Z_t \right\|^2 > \frac{\tau_n^2}{4} \right) \\
		&\leq (\alpha - \nu_n) + \frac{\Delta_n}{n} \frac{4}{\tau_n^2}  
		=\alpha + \left[ \frac{\Delta_n}{n} \frac{4}{\tau_n^2}  - \nu_n \right].
	\end{align*}
	Hence, if $\left[ \frac{\Delta_n}{n} \frac{4}{\tau_n^2}  - \nu_n \right] <0$, then $a_\alpha(Q) \leq a_{\alpha-\nu_n}(\overline{Q})+\frac{\tau_n}{2}$.
	Now employ this implication to \eqref{eqn:seq-test-1}, with $\overline{Q}=\hat{Q}$.
	We denote the corresponding error \eqref{eqn:seq-test-3} by $\hat{\Delta}_n$, which is a random variable because $\hat{Q}$ is random.
	We find that
	\begin{align}
		&P\left( T_n(X_1,\ldots, X_n) > a_{\alpha-\nu_n}(\hat{Q}) + \tau_n \right) \nonumber \\
		&\leq \alpha
		+ \frac{C\Theta_n \sqrt{\log(n)} \left(\tfrac{d_n}{n}\right)^{\xi(q,\beta)}}{\tau_n} 
		+ P\left( \hat{\Delta}_n > \frac{n \nu_n \tau_n^2}{4} \right). \label{eqn:seq-test-2}
	\end{align}
	To derive a bound on $\hat{\Delta}_n$, note that $\rho\leq C \Theta_n^2$ for some $C=C(\beta)$, by virtue of Proposition \ref{prop:cov}. 
	By Theorem \ref{thm:cov-est}, 
	\begin{align*}
		\E (\delta) = \E \max_{k=1,\ldots,n} \left\| \hat{Q}_k - Q_k \right\|_\tr
		&\leq C \Theta_n^2\left(\Gamma_n \sqrt{b_n} + \sqrt{nd_nb_n} + nb_n^{-1} + nb_n^{2-\beta}\right).
	\end{align*}
	Hence, 
	\begin{align*}
		\Delta_n = \mathcal{O}_P \left( \log(n) \Theta_n^2 \left( \Gamma_n^{\frac{1}{2}} n^{\frac{1}{2}} b_n^{\frac{1}{4}} + n^{\frac{3}{4}} d_n^\frac{1}{4} b_n^{\frac{1}{4}} + n b_n^{-\frac{1}{2}} + n b_n^{1-\frac{\beta}{2}} +1 \right) \right),
	\end{align*}
	so that $P(\hat{\Delta}_n > n \nu_n \tau_n^2 /4)\to 0$ if
	\begin{align*}
		\nu_n \tau_n^2 \quad \gg \quad
		\log(n) \Theta_n^2 \left( \Gamma_n^{\frac{1}{2}} n^{-\frac{1}{2}} b_n^{\frac{1}{4}} + n^{-\frac{1}{4}} d_n^\frac{1}{4} b_n^{\frac{1}{4}} +  b_n^{-\frac{1}{2}} +  b_n^{1-\frac{\beta}{2}} +n^{-1} \right).
	\end{align*}
	In view of \eqref{eqn:seq-test-2}, we have shown that condition \eqref{eqn:seq-test-ass} implies	\begin{align*}
		\limsup_{n\to\infty} P\left( T_n(X_1,\ldots, X_n) > a_{\alpha-\nu_n}(\hat{Q}) + \tau_n \right) \leq \alpha.
	\end{align*}

\end{proof}

\section*{Acknowledgements}
The authors gratefully acknowledge the financial support from Deutsche Forschungsgemeinschaft (DFG),
grant STE 1134-11/2.

\bibliography{strong-approximation2}
\bibliographystyle{apalike}

\end{document}